\documentclass{article}
\usepackage{graphicx} 
\usepackage{geometry}
\usepackage{amsmath, amsfonts, amssymb,amsthm,nccmath,bbm}
\usepackage[colorlinks,linkcolor=blue,citecolor=red]{hyperref}
\usepackage{verbatim}
\usepackage{appendix}
\usepackage{abstract}
\geometry{textheight=23cm}
\numberwithin{equation}{section}

\newtheorem{theorem}{Theorem}[section]
\newtheorem{cor}{Corollary}[section]

\newtheorem{lem}{Lemma}[section]
\newtheorem{mydef}{Definition}[section]
\newtheorem{rmk}{Remark}[section]
\newtheorem{cond}{Assumption}

\def\dbx{d\boldsymbol{x}}
\def\bx{\boldsymbol{x}}
\def\dby{d\boldsymbol{y}}
\def\u{\boldsymbol{u}}
\def\dv{\mathrm{div}}
\def\dt{\frac{d}{dt}}
\def\intb{\int_{B(0;a(t))}}
\def\intr{\int_{\mathbb{R}^3}}

\def\intxm{\int_{x_0}^M}
\def\inttxm{\int_0^T\int_{x_0}^M}

\def\ptu{\partial_{\tau}u}
\def\pxu{\partial_xu}

\def\vcons{\eta+\frac{4}{3}\varepsilon}
\def\pvcons{(\eta+\frac{4}{3}\varepsilon)}

\def\sqphi{\sqrt{\varphi}}
\def\ptau{\partial_{\tau}}
\def\px{\partial_x}
\def\pt{\partial_t}
\def\pr{\partial_r}

\def\rhox{\rho^{\xi}}
\def\ux{u^{\xi}}
\def\ax{a^{\xi}}
\def\rhoxj{\rho^{\xi_j}}
\def\uxj{u^{\xi_j}}
\def\axj{a^{\xi_j}}
\def\Linf{L^{\infty}}

\def\rhoin{\rho_{in}}
\def\uin{u_{in}}
\def\rin{r_{in}}

\def\LTwoHOne{L^2(0,T;H^1([0,r_{in}(t)),r^2dr))}
\def\chid{\chi_{\delta}}
\def\One{\mathbbm{1}}
\def\supp{\text{supp}}
\def\Dt{\mathcal{D}_t}

\title{Expanding Solutions to Free Boundary 3D Spherically Symmetric Compressible Navier-Stokes-Poisson Equations near the Lane-Emden Stars}
\author{Han Cao\thanks{Department of Mathematics, University of Southern California, 3620 S. Vermont Ave., Los Angeles, CA 90089-2532, USA. Email: caohan@usc.edu.}}
\date{ }

\begin{document}
    \maketitle
    \begin{abstract}
    We consider the gravitational Navier-Stokes-Poisson equations with the equation of state $P(\rho)=K\rho^{\gamma}$, where $\gamma\in(\frac{6}{5},\frac{4}{3}]$, which models the viscous polytropic gaseous stars. We prove the existence of global weak solutions to the equations with constant viscosity and radially symmetric initial data. For $\gamma=\frac{4}{3}$, we require the initial data having mass less than the mass of the Lane-Emden stars; for $\gamma\in(\frac{6}{5},\frac{4}{3})$, we require that the initial data belong to an invariant set where initial data can be taken near the Lane-Emden stars. For $\gamma\in(\frac{6}{5},\frac{4}{3})$, we show that the invariant set contains some initial data that are not allowed in previous literature. We also prove the support of any strong solution expands to infinity for the Navier-Stokes-Poisson equations with constant viscosity and a class of density-dependent viscosity, which indicates the strong instability of Lane-Emden solutions for the Navier-Stokes-Poisson equations.
\end{abstract}
    \tableofcontents
    \section{Introduction}
The evolution of viscous gaseous stars can be described by the Navier-Stokes-Poisson equations:
\begin{equation}
\begin{cases}
    \partial_t\rho+\mathrm{div}(\rho \boldsymbol{u})=0,\\
    \partial_t(\rho\boldsymbol{u})+\mathrm{div}(\rho\boldsymbol{u}\otimes\boldsymbol{u})+\nabla p=\mathrm{div}T-\rho\nabla\varPhi,\\
    \Delta\varPhi=4\pi\rho,
\end{cases}\label{nsp1}
\end{equation}
where $(\boldsymbol{x},t)\in\mathbb{R}^3\times\mathbb{R}^{+}$, $\rho(\boldsymbol{x},t)\ge0$ is the density, $\boldsymbol{u}(\boldsymbol{x},t)\in\mathbb{R}^3$ is the velocity. $p$ is the pressure satisfying
\begin{equation}\label{Polytropic}
    p=p(\rho)=K\rho^{\gamma}\text{ with }\gamma\in(1,2),
\end{equation}
which is the equation of state for a polytropic gas, where $K>0$ is a constant and $\gamma$ is the adiabatic exponent. For convenience, we set $K=1$. $\varPhi$ is the potential function of the self-gravitational force and the stress tensor $T$ is given by
\begin{equation}\label{StressTen}
    T=\varepsilon\left(\nabla\boldsymbol{u}+\nabla\boldsymbol{u}^T-\frac{2}{3}(\mathrm{div}\boldsymbol{u})I_{3\times3}\right)+\eta(\mathrm{div}\boldsymbol{u})I_{3\times3},
\end{equation}
where $\varepsilon\ge0$ is the shear viscosity and $\eta\ge0$ is the bulk viscosity. Viscosity coefficients can be either constant or density-dependent, i.e., $\varepsilon=\varepsilon(\rho)\ge0,\ \eta=\eta(\rho)\ge0$. One example is $\varepsilon(\rho)=\varepsilon_0\rho^{\alpha},\ \eta(\rho)=\eta_0\rho^{\alpha}$ with $\alpha>0$ \cite{DL,GLX,LXZ2}. On the other hand, if $\varepsilon=0,\ \eta=0$, then (\ref{nsp1}) models the inviscid self-gravitating gaseous stars, called the Euler-Poisson system. Our results mainly focus on the constant viscosity model with at least one of $\varepsilon$ and $\eta$ positive.

One type of special solution is the Lane-Emden solution, that is, the non-rotating steady solution of (\ref{nsp1}). We denote $(\rho_{\mu}(r),0)$ to be such a radially symmetric solution with central density $\rho_{\mu}(0)=\mu$. Then $\rho_{\mu}$ satisfies:
\begin{equation}\label{nsp2}
    \begin{cases}
        \nabla\rho_{\mu}^{\gamma}=-\rho_{\mu}\nabla\varPhi_{\mu},\\
        \Delta\varPhi_{\mu}=4\pi\rho_{\mu}.
    \end{cases}
\end{equation}
By rewriting (\ref{nsp2}), we obtain the following ODE
\begin{equation}\label{ode}
    \pr\rho_{\mu}^{\gamma}=-\frac{4\pi\rho_{\mu}}{r^2}\int_0^r\rho_{\mu}s^2ds.
\end{equation}

The studies of the equation (\ref{ode}) are of great importance in both astrophysics and mathematics. For different values of $\gamma$, the equation is applied to different stellar structures. For example, for $\gamma=\frac{4}{3}$, it is called the ''standard model'' for the sun, which was first proposed by Eddington \cite{Chan}. For $\gamma=\frac{5}{3}$, the model is applied to non-relativistic degenerate electron gas, which is a reasonable approximation to the white dwarfs of small mass \cite{KWW}. For $\gamma\searrow1$, the model is applied to heavier molecules. As for the solutions to (\ref{ode}), we have the following results. When $\gamma\in(\frac{6}{5},2)$, for given finite total mass, there exists at least one compactly supported solution to (\ref{ode}), while every solution is compactly supported and unique when $\gamma\in(\frac{4}{3},2)$. When $\gamma=\frac{6}{5}$, there exists a unique solution with explicit expression, which has infinite support. When $\gamma\in(1,\frac{6}{5})$, there are no solutions with finite mass \cite{LSS}.

When $\gamma\in(\frac{6}{5},2)$, we denote the radius of the support of the Lane-Emden solution by $0<R_{\mu}<+\infty$, the total mass and the total energy by 

\begin{equation}
    M_{\mu}=\int_0^{R_{\mu}}4\pi\rho_{\mu}(r) r^2dr,
\end{equation}

\begin{equation}
    E_{\mu}=\frac{1}{\gamma-1}\int_0^{R_{\mu}}4\pi\rho_{\mu}^{\gamma} r^2dr-\int_0^{R_{\mu}}4\pi\rho_{\mu}r\int_0^r4\pi\rho_{\mu}s^2dsdr.
\end{equation}
By scaling, we can see that if $\rho_1(r)$ is the Lane-Emden solution with central density 1, then $\beta^{\frac{2}{2-\gamma}}\rho_1(\beta r)$ is also a Lane-Emden solution, with central density $\mu=\beta^{\frac{2}{2-\gamma}}$. What's more, we have \cite{Chan}:
\begin{equation}
    M_{\mu}=M_1\mu^{\frac{3\gamma-4}{2}},\ R_{\mu}=R_1\mu^{\frac{\gamma-2}{2}},\ E_{\mu}=E_1\mu^{\frac{5\gamma-6}{2}}.
\end{equation}
We call the case $\gamma=\frac{4}{3}$ mass critical, and the case $\gamma=\frac{6}{5}$ energy critical. These are also the mass critical and energy critical for Euler-Poisson equations \cite{HJ} respectively. For $\gamma>\frac{6}{5}$, $\rho_{\mu}$ satisfies \cite{LSS}
\begin{equation}
    \rho_{\mu}(r)\sim(R_{\mu}-r)^{\frac{1}{\gamma-1}}\text{ for $r$ near $R_{\mu}$},
\end{equation}
which is referred to as the physical vacuum boundary condition \cite{LY}.

In astrophysics, it has been conjectured that the Lane-Emden solutions are stable when $\gamma>\frac{4}{3}$, and unstable when $\gamma<\frac{4}{3}$. In mathematics, there are many recent studies on the stability or instability of the Lane-Emden solutions. The linear instability for $\frac{6}{5}<\gamma<\frac{4}{3}$ and linear stability for $\frac{4}{3}\le\gamma<2$ of $(\rho_{\mu},0)$ are proved in \cite{LSS}. In particular, $\gamma=\frac{4}{3}$ is neutrally stable. In this case, a type of nonlinear instability for the Euler-Poisson system was found by Deng, Liu, Yang and Yao \cite{DLYY}, in the sense that small perturbation of the Lane-Emden solutions will lead to mass escaping to infinity. For $\frac{4}{3}<\gamma<2$, the nonlinear conditional stability of the Lane-Emden solutions for the Euler-Poisson equations was proved by Rein \cite{Rein}, while for $\frac{6}{5}<\gamma<\frac{4}{3}$ and $\gamma=\frac{6}{5}$, the nonlinear instability for the Euler-Poisson equations were proved by Jang \cite{J3,J1}. As for Navier-Stokes-Poisson equations, for $\frac{4}{3}<\gamma<2$, the nonlinear asymptotic stability of the Lane-Emden solutions under radial perturbation was proved by Luo, Xin and Zeng \cite{LXZ1,LXZ2}, while for $\frac{6}{5}<\gamma<\frac{4}{3}$, the nonlinear instability of the Lane-Emden solutions was proved by Jang and Tice \cite{JT}. There are also some studies on the stability of gaseous stars with a more general pressure $p(\rho)$ satisfying $p(\rho)\approx\rho^{\gamma_0}$ near $\rho=0$. Lin and Zeng \cite{LZ} proved the linear stability for the Euler-Poisson equations is entirely determined by the mass–radius curve of the steady state parametrized by the central density, which is referred to as the turning point principle for the Euler-Poisson equations, and they proved the linear stability of the Euler-Poisson equations for $\frac{4}{3}<\gamma_0<2$ and linear instability for $\frac{6}{5}<\gamma_0<\frac{4}{3}$. Lin, Wang and Zhu \cite{LWZ} proved that the linear stability from turning point principle implies nonlinear orbital stability for the Euler-Poisson equations under radial perturbation. The stability is unconditional in the sense that global radial weak solutions near the steady state can be constructed \cite{CHLWW}. Cheng, Lin and Wang showed the turning point principle for the Navier-Stokes-Poisson equations and proved the linear and nonlinear instability of the Navier-Stokes-Poisson equations for $\frac{6}{5}<\gamma_0<\frac{4}{3}$ and the linear and nonlinear stability for $\frac{4}{3}<\gamma_0<2$ in \cite{CLW}. The stability results of \cite{LWZ,LZ, CLW} apply to the model of white dwarf stars proposed by Chandrasekhar.

For the case $\gamma =\frac{4}{3}$, we would also like to mention that there exists a family of expanding/collapsing solutions (Goldreich-Weber solutions) to the Euler-Poisson equations which can be explicitly constructed.
This family of solutions was first discovered by Goldreich and Weber \cite{GW} and the Lane-Emden solution is a special case within the family. Moreover, Goldreich-Weber solutions are explicit solutions that exhibit the nonlinear instability of mass critical Lane-Emden solution found by \cite{DLYY}. In the vacuum free boundary framework, Had\v{z}i\'c and Jang proved the nonlinear stability of the expanding Goldreich-Weber solutions for the Euler-Poisson equations under radial \cite{HJ} and nonradial \cite{HJL} perturbations. While for the Navier-Stokes-Poisson equations with constant viscosity, the instability of self-similar expanding Goldreich-Weber solutions and nonlinear stability of linear expanding Goldreich-Weber solutions were proved in \cite{Liu2019Isentropic} by Liu. For the model of viscous radiation gaseous star, Liu proved the existence of stationary solutions \cite{LXin2} and the stability of a class of linearly expanding homogeneous solutions \cite{LXin}.

The existence and dynamics of solutions to the free boundary value problem for the Navier-Stokes-Poisson equations have been widely studied, while the problem is challenging because of the vacuum boundary, especially when considering the density-dependent viscosity. The local-in-time well-posedness of spherically symmetric strong solutions to the free boundary value problem with constant viscosity for $\frac{6}{5}<\gamma<2$ was established by Jang \cite{J2}. The global existence of spherically symmetric weak solutions was constructed in \cite{G} for $\gamma>\frac{4}{3}$, where the negative gravitational energy can be controlled by the mass and the internal energy. While for $\frac{6}{5}<\gamma\le\frac{4}{3}$, Kong and Li \cite{KL} proved the existence of spherically symmetric global weak solutions for constant viscosity problem and Duan and Li \cite{DL} proved the existence of spherically symmetric global weak solutions for viscosity coefficients $\varepsilon(\rho)=\frac{1}{2}\rho$ and $\eta(\rho)=\frac{1}{3}\rho$. In both choices of viscosity, total mass of the solutions is required to be less than a certain critical mass. As for radially symmetric global strong solutions with $\frac{4}{3}<\gamma<2$, the existence was proved by Luo, Xin and Zeng for both constant viscosity \cite{LXZ1} and density-dependent viscosity \cite{LXZ2} when the initial data is a small radial perturbation of the Lane-Emden solution with the same mass.

There are also a lot of results on the existence of global weak solutions to the Euler-Poisson equations. In \cite{DLYY}, it was proved that the gravitational energy can be controlled by the mass and the internal energy for $\gamma>\frac{4}{3}$; while for $\gamma=\frac{4}{3}$, to control the gravitational energy, the total mass should be less than a critical mass. Chen, Lin, Wang and Yuan \cite{CHWY} proved the global existence of spherically symmetric weak solutions to the Euler-Poisson equations with large initial data for dimension-$n$, where $\gamma>\frac{2n}{n+2}$. In particular, when $\frac{2n}{n+2}<\gamma\le\frac{2(n-1)}{n}$, the total mass is assumed to be less than a certain critical mass. We remark that for $\gamma=\frac{4}{3}$ (or $\gamma=\frac{2(n-2)}{n}$ in dimension-$n$), the critical mass is a constant independent of the initial energy, while for $\frac{6}{5}<\gamma<\frac{4}{3}$ (or $\frac{2n}{n+2}<\gamma<\frac{2(n-1)}{n}$ in dimension-$n$) the critical mass depends on the initial energy. Also see the existence of global radially symmetric weak solutions to the Euler-Poisson equations with general pressure laws in \cite{CHLWW} by Chen, Huang, Li, Wang and Wang. In \cite{CCL}, Cheng, Cheng and Lin proved that the sharp critical mass for $\gamma=\frac{4}{3}$ is the mass of the Lane-Emden solution, which is the Chandrasekhar mass when $p(\rho)=\rho^{\frac{4}{3}}$, and it coincides with the critical mass $M_0$ for the Euler-poisson system proved by Fu and Lin \cite{FL}. A variational method for $\frac{6}{5}<\gamma<\frac{4}{3}$ is also established in \cite{CCL} to control the negative gravitational energy. In both ranges of $\gamma$, they proved the existence of global weak solutions to the Euler-Poisson equations where the initial data can be taken near the Lane-Emden solution.

As for the expanding rate of the free boundary, it was proved that the support of solutions to Euler-Poisson equations \cite{DLYY,CCL}, compressible Navier-Stokes equations \cite{GLX} or Navier-Stokes-Poisson equations \cite{KL} expand at an algebraic rate.

Inspired by \cite{CCL}, we prove the existence of radial weak solutions to the Navier-Stokes-Poisson equations with constant viscosity and $\gamma\in(\frac{6}{5},\frac{4}{3}]$, using a similar variational method to control the negative gravitational energy. We also prove the algebraic expanding rate of the support of strong solutions to the Navier-Stokes-Poisson equations with constant or power-law viscosity with the same range of $\gamma$.

\section{Formulation and main results}
Consider the radially symmetric solutions to (\ref{nsp1}) with constant viscosity where at least one of $\varepsilon$ and $\eta$ are strictly positive and another one non-negative, which describe the motion of a nonrotating star. In this case, let $r=|\boldsymbol{x}|$, we can write:
\begin{equation}\label{VectURHO}
    \rho(\boldsymbol{x},t)=\rho(r,t)\text{ and }\boldsymbol{u}(\boldsymbol{x},t)=u(r,t)\frac{\boldsymbol{x}}{r}.
\end{equation}

We assume that the gaseous star is compactly supported, with a free boundary where the star's density continuously meets the vacuum. Assume $a(t)>0$ to be the radius of the support of $\rho(r,t)$ such that
\begin{equation}
    \rho(r,t)>0\text{ for }r\in[0,a(t))\text{ and }\rho(a(t),t)=0.
\end{equation}
We also impose the kinematic condition
\begin{equation}
    \frac{d}{dt}a(t)=u(a(t),t),
\end{equation}
and the continuity of the effective viscous flux $(pI_{3\times3}-(\eta+\frac{4}{3}\varepsilon)(\dv\u)I_{3\times3})\cdot\nu=0$ at the free boundary \cite{JZ,KL}, where $\nu=\frac{\boldsymbol{x}}{r}$ is the outer normal unit vector of the free surface. Then (\ref{nsp1}) with radially symmetry reads as

\begin{equation}\label{nsp3}
    \begin{cases}
        \partial_t\rho+\frac{1}{r^2}\partial_r(r^2\rho u)=0,\\
        \rho(\partial_t u+u\partial_ru)+\partial_r\rho^{\gamma}=-\frac{4\pi\rho}{r^2}\int_0^r\rho(s,t)s^2ds+\partial_r\left(\frac{\frac{4}{3}\varepsilon+\eta}{r^2}\partial_r(r^2u)\right),
    \end{cases}
\end{equation}
with boundary conditions
\begin{equation}\label{boundary conditions}
    \begin{aligned}
        \sigma:=\rho^{\gamma}-&(\eta+\frac{4}{3}\varepsilon)(\partial_ru+\frac{2u}{r})=0\text{ at }(a(t),t),\\
        \rho(a(t),t)&=0,\ u(a(t),t)=a'(t),\ u(0,t)=0,
    \end{aligned}
\end{equation}
for $(r,t)\in\{(r,t):0\le r\le a(t),0\le t\le T\}.$

Further, it's also convenient to use the Lagrangian coordinates. Let
\begin{equation}
    \begin{cases}
        x(r,t)=4\pi\int_0^r\rho(s,t)s^2ds,\\
        \tau(r,t)=t,
    \end{cases}
\end{equation}
then
\begin{equation*}
    \partial_r=4\pi\rho r^2\partial_x,\ \partial_t=-4\pi\rho ur^2\partial_x+\partial_{\tau},
\end{equation*}
and (\ref{nsp3}) becomes
\begin{equation}\label{nsp4}
    \begin{cases}
        \partial_{\tau}\rho+4\pi\rho^2\partial_x(r^2u)=0,\\
        \partial_{\tau}u+4\pi r^2\partial_x\rho^{\gamma}+\frac{x}{r^2}=16\pi^2r^2\partial_x\left((\frac{4\varepsilon}{3}+\eta)\rho\partial_x(r^2u)\right),
    \end{cases}
\end{equation}
with boundary conditions
\begin{equation}\label{boundary conditions: Lag}
\begin{aligned}
    \sigma=\rho^{\gamma}-(\eta+\frac{4}{3}\varepsilon)(4\pi r^2\rho\partial_x&u+\frac{2u}{r})=0\text{ at }x=M,\\
    u(0,\tau)=0,\ &\rho(M,\tau)=0.
\end{aligned}
\end{equation}

\begin{mydef}[global weak solutions]\label{WeakSolutionRadSymmetry}
    For any given time $T>0$, $(\rho,u,a)$ with $\rho\ge0\ a.e.$ is said to be a global weak solution to the free boundary value problem (\ref{nsp3})-(\ref{boundary conditions}) on $[0,a(t))\times[0,T]$ provided that
    \begin{equation*}
        \rho\in L^{\infty}(0,T;L^1\cap L^{\gamma}([0,a(t)),r^2dr)),
    \end{equation*}
    \begin{equation*}
        \ \sqrt{\rho}u\in L^{\infty}(0,T;L^2([0,a(t)),r^2dr)),
    \end{equation*}
    \begin{equation*}
        \pr u+\frac{2u}{r}\in L^2(0,T;L^2([0,a(t)),r^2dr)),\ a(t)\in H^1([0,T]),
    \end{equation*}
    and the equations are satisfied in the sense of distribution, i.e., for any $0\le t_1<t_2\le T$, and for any $\varphi\in C^1_c([0,a(t))\times[0,T])$,
    \begin{equation}\label{WkSolnM_Rad}
        \int_{0}^{a(t)}\rho\varphi r^2drdt|_{t_1}^{t_2}-\int_{t_1}^{t_2}\int_{0}^{a(t)}(\rho\pt\varphi+\rho u\pr\varphi)r^2drdt=0,
    \end{equation}
    and for any $0\le t_1<t_2\le T$ and any $\psi\in C^1_c([0,a(t))\times[0,T])$ satisfying $|\frac{\psi(r,t)}{r}|\le C$ near $r=0$ for some $C>0$,
    \begin{equation}\label{WkSolnMo_Rad}    
         \begin{aligned}
        &\int_0^{a(t)}\rho u r^2\psi dr|_{t_1}^{t_2}-\int_{t_1}^{t_2} \int_0^{a(t)}\rho u\pt\psi r^2+\rho^\gamma(\pr\psi+\frac{2\psi}{r})r^2drdt+\rho u^2\pr\psi r^2drdt\\
        =&-\int_{t_1}^{t_2} \int_0^{a(t)}4\pi\rho\psi\int_0^r\rho(s,t)s^2dsdrdt-\int_{t_1}^{t_2} \int_0^{a(t)}(\eta-\frac{2}{3}\varepsilon)(\pr u+\frac{2u}{r})(\pr\psi+\frac{2\psi}{r})r^2drdt\\
        &-\int_{t_1}^{t_2} \int_0^{a(t)}2\varepsilon(\pr u\pr\psi+\frac{2u\psi}{r^2})r^2drdt.
    \end{aligned}
    \end{equation}
    and
    \begin{equation}\label{WkSolnP_Rad}
        \pr^2\varPhi+\frac{2}{r}\pr\varPhi=4\pi\rho\ a.e.,
    \end{equation}
    where $\varPhi\rightarrow0$ as $r\rightarrow\infty$ and $\rho=0$ in $\mathbb{R}_{\ge0}\backslash[0,a(t))$ at time $t\in[0,T]$.
    Also,
    \begin{equation}
    \rho(a(t),t)=0\ and\ \sigma(a(t),t)=\left(\rho^{\gamma}-(\eta+\frac{4}{3}\varepsilon)(\pr u+\frac{2u}{r})\right)(a(t),t)=0
    \end{equation}
in the sense of trace.
\end{mydef}
\begin{rmk}\label{WeakSolution3D}
    In fact, if $\rho(r,t),\ u(r,t)$ satisfy (\ref{nsp3})-(\ref{boundary conditions}) in the sense of (\ref{WkSolnM_Rad})-(\ref{WkSolnP_Rad}), then $\rho(\bx,t),\ \boldsymbol{u}(\bx,t)$ given by (\ref{VectURHO}) also satisfy (\ref{nsp1}) in the sense of distribution in 3D space. To be specific, if we let $\Omega_t$ be the support of $\rho$ at time $t\in[0,T]$, then (\ref{WkSolnM_Rad})-(\ref{WkSolnP_Rad}) give: for any given $T>0$ and $0\le t_1<t_2\le T$, and for any $\varphi\in C^1_c({\Omega_t}\times[0,T])$,
    \begin{equation}\label{WkSolnM_3D}
        \int_{\Omega_t}\rho\varphi\dbx|_{t_1}^{t_2}=\int_{t_1}^{t_2}\int_{\Omega_t}\rho\partial_t\varphi+\rho\u\cdot\nabla_x\varphi\dbx dt,
    \end{equation}
    and for any $0\le t_1<t_2\le T$ and any $\boldsymbol{\psi}=(\psi_1,\psi_2,\psi_3)\in (C^1_c({\Omega_t}\times[0,T]))^3$ satisfying $\boldsymbol{\psi}(\bx,t)=0$ on $\partial\Omega_t$,
    \begin{equation}   \label{WkSolnMo_3D} 
        \begin{aligned}                 &\int_{\Omega_t}\rho\u\cdot\boldsymbol{\psi}\dbx|_{t_1}^{t_2}-\int_{t_1}^{t_2}\int_{\Omega_t}\rho\u\cdot\boldsymbol{\psi}+\rho\u\otimes\u:\nabla\boldsymbol{\psi}+\rho^{\gamma}div\boldsymbol{\psi}\dbx dt \\
        =&-\int_{t_1}^{t_2}\int_{\Omega_t}(\eta-\frac{2}{3}\varepsilon)div\u\cdot div\boldsymbol{\psi}+\varepsilon(\nabla\u+\nabla\u^T):\nabla\boldsymbol{\psi}\dbx dt-\int_{t_1}^{t_2}\int_{\Omega_t}\rho\nabla\varPhi\cdot\boldsymbol{\psi}\dbx dt,
        \end{aligned}        
    \end{equation}
    and
    \begin{equation}\label{WkSolnP_3D}
        \Delta\varPhi=4\pi\rho\ a.e.,
    \end{equation}
    where $\varPhi\rightarrow0$ as $|x|\rightarrow\infty$ and $\rho=0$ in $\mathbb{R}^3\backslash\Omega_t$ at time $t$. Also,
    \begin{equation}
        \rho|_{B(0;a(t))}=0\text{ and }(pI_{3\times3}-(\eta+\frac{4}{3}\varepsilon)(\dv\u)I_{3\times3})\cdot\nu=0
    \end{equation}
    in the sense of trace.
\end{rmk}

Suppose $(\rho,u,a)$ is a weak solution to (\ref{nsp3})-(\ref{boundary conditions}), we define the mass
\begin{equation}
    M(\rho(t)):=\int_0^{a(t)}4\pi\rho r^2dr
\end{equation}
and the energy
\begin{equation}\label{eeenergy}
\begin{aligned}
    E(\rho(t),u(t))=&4\pi\int_0^{a(t)}\frac{1}{2}\rho u^2 r^2+\frac{1}{\gamma-1}\rho^{\gamma} r^2dr-4\pi\int_0^{a(t)}4\pi\rho r\int_0^r\rho s^2dsdr\\
    =&4\pi\int_0^{a(t)}\frac{1}{2}\rho u^2 r^2dr+Q(\rho(t))+\frac{4-3\gamma}{\gamma-1}\int_0^{a(t)}4\pi\rho^{\gamma} r^2dr,
\end{aligned}
\end{equation}
where
\begin{equation}\label{FunctionalQ}
    Q(\rho(t)):=3\int_0^{a(t)}4\pi\rho^{\gamma}r^2dr-4\pi\int_0^{a(t)}4\pi\rho r\int_0^r\rho s^2dsdr.
\end{equation}
For any $\mu>0$, define
\begin{equation}\label{FunctionalS}
    S_{\mu}(\rho(t))=\frac{1}{\gamma-1}\int_0^{a(t)}4\pi\rho^{\gamma} r^2dr-4\pi\int_0^{a(t)}4\pi\rho r\int_0^r\rho s^2dsdr-\varPhi(R_{\mu})\int_0^{a(t)}4\pi\rho  r^2dr.
\end{equation}

In order to control the negative part of the energy (\ref{eeenergy}), for $\gamma=\frac{4}{3}$, we require the mass $M(\rho_0)<M_{ch}$, where $M_{ch}$ is the mass of the mass critical Lane-Emden solutions ($\gamma=\frac{4}{3}$). For $\gamma\in(\frac{6}{5},\frac{4}{3})$, we assume the initial data $(\rho_0,u_0)\in\mathcal{I}$, where
\begin{equation}\label{Linmass}
    \medmath{\mathcal{I}=\{(\rho,u):Q(\rho)>0,M(\rho)<\left(\frac{5\gamma-6}{2(\gamma-1)}\right)^{\frac{2(\gamma-1)}{5\gamma-6}}\left(\frac{4-3\gamma}{5\gamma-6}\right)^{\frac{4-3\gamma}{5\gamma-6}}l_1^{\frac{2(\gamma-1)}{5\gamma-6}}\frac{R_1}{M_1}\left(E(\rho,u)\right)^{\frac{3\gamma-4}{5\gamma-6}}\}}
\end{equation}
and $l_1=S_1(\rho_1)$. For convenience, we state
\begin{cond}\label{cond11}

\begin{equation}\label{cond1}
\begin{aligned}
    &M(\rho_0)<M_{ch},\text{ if $\gamma=\frac{4}{3}$};\\
    &(\rho_0,u_0)\in\mathcal{I},\text{ if $\gamma\in(\frac{6}{5},\frac{4}{3})$}.
\end{aligned}
\end{equation}
\end{cond}
Our first main theorem is the following.

\begin{theorem}[Global existence]\label{Thm1: ExistenceOfGlobalWkSoln}Let $T>0$ and $\gamma\in(\frac{6}{5},\frac{4}{3}]$.
    Suppose $(\rho_0(r),u_0(r))$ are initial data satisfying
\begin{equation}\label{conditionsI}
    \begin{aligned}
    &\rho_0\ge0,\ \rho_0\in L^1([0,a_0),r^2dr)\cap L^{\infty}([0,a_0),r^2dr),\ \\&(\rho_0)^k\in H^1([0,a_0),r^2dr)\ (0<k\le\gamma-\frac{1}{2}),\ u_0\in H^1([0,a_0),r^2dr),\\
    &\rho_0(r)>0\text{ for }r\in[0,a_0),\ \rho_0(a_0)=0,\ \partial_ru_0+\frac{2u_0(a_0)}{a_0}=0.
    \end{aligned}
\end{equation}
and Assumption \ref{cond11}. Then there exists a global weak solution $(\rho(r,t),u(r,t),a(t))$ to (\ref{nsp3})-(\ref{boundary conditions}) for $t\in[0,T]$, which satisfies $\rho(r,t)\ge0\ a.e.$ and 
\begin{equation*}
    0\le E(\rho,u)\le E_0:=E(\rho_0,u_0).
\end{equation*}
\end{theorem}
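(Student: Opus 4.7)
The plan is to construct weak solutions via a regularization and limiting procedure, working in the mass-Lagrangian coordinates of (\ref{nsp4})--(\ref{boundary conditions: Lag}) where the free boundary is fixed at $x = M$. First I would truncate the initial data by setting $\rho_0^{\xi} = \rho_0 + \xi$ to move strictly away from vacuum and introducing an inner cutoff $x_0 > 0$ to avoid the coordinate singularity at the origin, and then invoke local-in-time well-posedness of strong solutions on $[x_0,M]$ with strictly positive density (in the spirit of \cite{J2}) to obtain smooth approximates $(\rho^{\xi}, u^{\xi}, a^{\xi})$. The bulk of the work is then to push these approximations back to $\xi = 0$ and $x_0 = 0$.

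The central step is the energy inequality. Testing the momentum equation against $u$ formally yields
\begin{equation*}
    \dt E(\rho,u) + 4\pi\int_0^{a(t)}\!\Big[\,2\varepsilon\bigl((\pr u)^2+\tfrac{2u^2}{r^2}\bigr)+(\eta-\tfrac{2}{3}\varepsilon)(\pr u+\tfrac{2u}{r})^2\,\Big]r^2\,dr = 0,
\end{equation*}
so $E(\rho(t),u(t)) \le E_0$. The real obstacle, and the heart of the whole proof, is to show the energy stays nonnegative and that $\int \rho^{\gamma}r^2\,dr$ and $\int \rho u^2 r^2\,dr$ are individually bounded, i.e.\ to control the negative gravitational term. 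For $\gamma = \frac{4}{3}$ I would use the sharp Hardy--Littlewood--Sobolev-type inequality whose optimal constant is realized by the Chandrasekhar/Lane--Emden ground state, giving
\begin{equation*}
    4\pi\int_0^{a(t)}4\pi\rho r\int_0^r\rho s^2\,ds\,dr \le \frac{M(\rho_0)}{M_{ch}}\cdot 3\int_0^{a(t)}4\pi\rho^{4/3}r^2\,dr,
\end{equation*}
so that mass conservation and $M(\rho_0)<M_{ch}$ give $Q(\rho(t)) \ge (1-M(\rho_0)/M_{ch})\cdot 3\int 4\pi\rho^{4/3}r^2\,dr \ge 0$. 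For $\gamma\in(\frac{6}{5},\frac{4}{3})$ I would follow the variational method of \cite{CCL}: by the scaling of (\ref{FunctionalS}) and rearrangement against the Lane--Emden profile, $S_{\mu}(\rho)\ge l_1\mu^{(5\gamma-6)/2}$; optimizing in $\mu$ together with conservation of mass and the energy decay forces $(\rho(t),u(t))\in\mathcal I$ to be preserved in time, so $Q(\rho(t))>0$ stays strictly away from zero and the negative gravitational energy remains dominated.

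With coercivity secured, the remaining uniform bounds follow directly: $\rho^{\xi}\in L^{\infty}(0,T;L^1\cap L^{\gamma})$, $\sqrt{\rho^{\xi}}u^{\xi}\in L^{\infty}(0,T;L^2)$, and $\pr u^{\xi}+\frac{2u^{\xi}}{r}\in L^2(0,T;L^2(r^2dr))$ from the viscous dissipation; the kinematic condition together with the trace of $\sqrt{\rho^{\xi}}u^{\xi}$ at $x=M$ gives $a^{\xi}\in H^1([0,T])$. I would then pass to the limit by Aubin--Lions, extracting strong convergence of $\rho^{\xi}$ in $C(0,T;L^p_{loc})$ for appropriate $p$ and weak convergence of $\rho^{\xi}u^{\xi}$ and $(\rho^{\xi})^{\gamma}$, verifying the weak formulations (\ref{WkSolnM_Rad})--(\ref{WkSolnP_Rad}) and the boundary traces $\rho(a(t),t)=0$ and $\sigma(a(t),t)=0$. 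Beyond the gravitational coercivity, the main technical hurdle will be simultaneously handling the vacuum free boundary and the coordinate singularity at $r=0$: test functions supported near $a(t)$ must be chosen compatible with the effective viscous flux condition, and the cutoff at $x_0$ must be removed after uniform estimates independent of $x_0$ are in place.
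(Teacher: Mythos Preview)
Your overall architecture---regularize, get energy coercivity via Assumption~\ref{cond11} and the variational lemma, pass to the limit---matches the paper's. The coercivity discussion is essentially right (a minor slip: the sharp constant for $\gamma=\tfrac{4}{3}$ scales like $(M/M_{ch})^{2/3}$, not $M/M_{ch}$). But there is a real gap in the compactness step.

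You write ``pass to the limit by Aubin--Lions, extracting strong convergence of $\rho^{\xi}$ in $C(0,T;L^p_{loc})$.'' This is where the proof actually lives, and Aubin--Lions alone will not do it. The basic energy estimate gives only $\rho^{\xi}$ bounded in $L^{\infty}_tL^{\gamma}_x$; you have no uniform spatial compactness of $\rho^{\xi}$ near the origin, so weak limits of $(\rho^{\xi})^{\gamma}$ need not equal $\rho^{\gamma}$. The paper handles this by adapting the Lions--Feireisl machinery to the radial free-boundary setting: first a higher space--time integrability estimate $\int_0^T\!\int \rho^{2\gamma}r^8\,dr\,dt\le C$ (Lemma~\ref{rhoIntegrability}), then an effective viscous flux identity
\[
\overline{\rho^{\theta+\gamma}}-(\eta+\tfrac{4}{3}\varepsilon)\overline{\rho^{\theta}\partial_r u}
=\overline{\rho^{\theta}}\;\overline{\rho^{\gamma}}-(\eta+\tfrac{4}{3}\varepsilon)\overline{\rho^{\theta}}\,\partial_r u
\]
(Lemma~\ref{wkconvergenceLemma1}), and finally a renormalization/Young-measure argument to force $\overline{\rho^{\theta}}=\rho^{\theta}$ and hence strong convergence of $\rho^{\xi}$ (Lemmas~\ref{wkconvergenceLemma2}--4.8). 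None of this is visible in your sketch.

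There is a second structural point you elide: the paper cannot push the near-origin compactness argument all the way to the free boundary, nor the boundary estimates down to the origin. It splits the domain, using the Lions-type argument on $[0,r_{in}(t))$ and genuinely higher-order $H^1$ estimates in Lagrangian coordinates (Lemmas~\ref{HigherEnergyEst2}--\ref{HigherEnergyEst3}) plus Arzel\`a--Ascoli on $[r_{x_1}(t),a(t)]$, then glues the two limits. Your remark that ``test functions supported near $a(t)$ must be chosen compatible with the effective viscous flux condition'' gestures at this but does not supply the needed estimates. Finally, note that the paper's approximation is a spatial cutoff $r\ge\xi$ with $u^{\xi}(\xi,t)=0$, not your additive $\rho_0+\xi$; the latter would destroy the vacuum boundary condition $\rho(a(t),t)=0$ that you must recover in the limit.
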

 
 The following theorem gives the higher-order regularities of the solutions. 
\begin{theorem}[Higher order regularities]\label{Thm2}
    Suppose $(\rho,u,a)$ is a weak solution to (\ref{nsp3})-(\ref{boundary conditions}) with initial data $(\rho_0,u_0)$ satisfying the conditions in Theorem \ref{Thm1: ExistenceOfGlobalWkSoln}.
    
    I. (Interior regularities) Assume further that the initial velocity $u_0\in H^2([r_0,a_0))$, then for any $r_{x_0}<r_{x_1}<r_{x_2}<a_0$,
    \begin{equation}\label{Thm2: 1}
        \begin{aligned}
            &(\rho,u)\in C([r_{x_1}(t),r_{x_2}(t)]\times[0,T]);\\
            &\rho\in\Linf(0,T;H^1([r_{x_1}(t),r_{x_2}(t)])),\ u\in\Linf(0,T;H^2([r_{x_1}(t),r_{x_2}(t)]));\\
            &\pt\rho\in\Linf(0,T;L^2([r_{x_1}(t),r_{x_2}(t)]))\cap L^2(0,T;H^1([r_{x_1}(t),r_{x_2}(t)]));\\
            &\pt u\in\Linf(0,T;L^2([r_{x_1}(t),r_{x_2}(t)]))\cap L^2(0,T;H^1([r_{x_1}(t),r_{x_2}(t)])),
        \end{aligned}
    \end{equation}
    where $x_i=M-\int_{r_{x_i}}^{a_0}\rho_04\pi r^2dr$.

    II. (Boundary regularities) For near the free boundary $r=a(t)$, let $\delta>0$ be small enough and $\Omega_{\delta}=(a(t)-\delta,a(t))$, we have
    \begin{equation}\label{Thm2: 2}
    \begin{aligned}
         &\sup_{0\le t\le T}\left(\|\rho^k\|_{H^1(\Omega_{\delta})}+\|u\|_{H^1(\Omega_{\delta})}+\|\sigma\|_{L^2(\Omega_{\delta})}\right)\\
         +&\|\sqrt{\rho}(\pt u+u\pr u)\|_{L^2(0,T;L^2(\Omega_{\delta}))}+\|\sigma\|_{L^2(0,T;H^1(\Omega_{\delta}))}+\|u\|_{L^2(0,T;H^2(\Omega_{\delta}))}+\|a\|_{H^1([0,T])}\le C_1
    \end{aligned}
    \end{equation}
    where $\sigma$ is defined in (\ref{boundary conditions}), and $C_{1}>0$ is a constant depends on
$$\|\rho _{0}\|_{L^{\infty}([0,a_{0}))},\ \|\rho _{0}^{k}\|_{H^{1}([0,a_{0}))},
\ \|u_{0}\|_{H^{1}([0,a_{0}))}.$$

    Furthermore, if $(\rho_0,u_0)$ satisfies $u_0\in H^2([a_0-2\delta,a_0)),\ \rho_0^{-\frac{1}{2}}\pr^2u_0\in L^2([a_0-2\delta,a_0))$, then
    \begin{equation}\label{Thm2: 3}
        \begin{aligned}
            &\sup_{0\le t\le T}\left(\|\sqrt{\rho}(\pt u+u\pr u)\|_{L^2(\Omega_{\delta})}+\|u\|_{H^2(\Omega_{\delta})}+\|\rho^{-\frac{1}{2}}\pr^2u\|_{L^2({\Omega_{\delta}})}+\|\sigma\|_{H^1(\Omega_{\delta})}\right)\\
            +&\|\pr(\pt u+u\pr u)\|_{L^2(0,T;L^2(\Omega_{\delta}))}+\|a\|_{H^2([0,T])}\le C_2,
        \end{aligned}
    \end{equation}
    where $C_2>0$ depends on $\|\rho_0\|_{L^{\infty}([0,a_0))},\ \|\rho_0^k\|_{H^1([0,a_0))},\ \|u_0\|_{H^1([0,a_0))},\ \|\rho_0^{-\frac{1}{2}}\pr^2u_0\|_{L^2(\Omega_{2\delta})}$.
\end{theorem}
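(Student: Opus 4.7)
The plan is to establish the bounds \eqref{Thm2: 1}--\eqref{Thm2: 3} at the level of the approximate solutions $(\rhox, \ux, \ax)$ used in the proof of Theorem \ref{Thm1: ExistenceOfGlobalWkSoln} (on regularized domains with strictly positive approximate initial density), and then pass these uniform-in-$\xi$ bounds to the weak-solution limit by lower semicontinuity. I will work throughout in the Lagrangian formulation \eqref{nsp4}.

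\textbf{Interior regularity.} Since $\rho_0(r) > 0$ on $[0, a_0)$, on every compact region $[0, r_{x_2}]$ with $x_2 < M$ we have $\rho_0 \ge c_0 > 0$. The continuity equation $\ptau \rho + 4\pi\rho^2 \px(r^2 u) = 0$ together with the viscous bound on $\px(r^2 u)$ already available from Theorem \ref{Thm1: ExistenceOfGlobalWkSoln} and Gronwall's inequality gives $c_0' \le \rho \le C$ on $[x_0, M]$ for $0 \le \tau \le T$. On this non-degenerate Lagrangian region the momentum equation in \eqref{nsp4} is uniformly parabolic in $u$. Introducing a cutoff $\chi \in C^\infty_c([x_0, M])$ with $\chi \equiv 1$ on $[x_1, x_2]$, applying $\ptau$ to the momentum equation, testing with $\chi^2 \ptu$, and using the additional $H^2$ regularity of $u_0$ (which controls $\ptu|_{\tau=0}$ via the equation), a standard interior energy estimate yields $\ptu \in \Linf(0,T; L^2) \cap L^2(0,T; H^1)$ on $[x_1, x_2]$. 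Reading the elliptic problem obtained by freezing $\tau$ then gives $u \in \Linf(0,T; H^2)$, and differentiating the continuity equation in $x$ and applying Gronwall propagates the spatial $H^1$ regularity to $\rho$, with $\ptau\rho$ in the stated spaces following from the equation. Continuity of $(\rho, u)$ follows from Sobolev embedding.

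\textbf{Boundary regularity (estimate \eqref{Thm2: 2}).} Near the free boundary the density degenerates and the analysis centers on the effective viscous flux $\sigma = \rho^\gamma - \pvcons(\pr u + \frac{2u}{r})$, which vanishes on $r = a(t)$. Rewriting the momentum equation as
\begin{equation*}
    \rho\,\Dt u = \pr\sigma + \frac{2(\sigma - \rho^\gamma)}{r} - \frac{4\pi\rho}{r^2}\int_0^r \rho s^2\,ds,
\end{equation*}
with $\Dt u = \pt u + u\pr u$, I multiply by $\Dt u$, integrate over $\Omega_\delta$ against a cutoff, and integrate by parts; the boundary contribution at $r = a(t)$ vanishes because $\sigma = 0$ there. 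After absorbing lower-order terms this yields
\begin{equation*}
    \int_{\Omega_\delta} \rho |\Dt u|^2 r^2\,dr + \frac{d}{dt}\,\frac{\vcons}{2}\int_{\Omega_\delta}\bigl(\pr u + \tfrac{2u}{r}\bigr)^2 r^2\,dr \le C,
\end{equation*}
which gives the $L^2_t L^2$ bound on $\sqrt\rho\,\Dt u$ and the $L^\infty_t L^2$ bound on $\sigma$. Solving the above display for $\pr \sigma$ then promotes $\sigma$ to $L^2_t H^1(\Omega_\delta)$, hence $u \in L^2_t H^2(\Omega_\delta)$. The $\rho^k$ estimate comes from viewing the continuity equation as a transport equation for $\rho^k$, and the $H^1$ bound on $a(t)$ from $a'(t) = u(a(t),t)$ plus trace.

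\textbf{Boundary regularity (estimate \eqref{Thm2: 3}) and main obstacle.} Under the stronger initial condition $\rho_0^{-1/2}\pr^2 u_0 \in L^2(\Omega_{2\delta})$, I differentiate the momentum equation in $\tau$ (equivalently, apply $\Dt$) and test against $\pr(\Dt u)$; the weight $\rho^{-1/2}$ on $\pr^2 u$ is dictated by the natural scaling near the free boundary, where $\rho \sim (a(t)-r)^{1/(\gamma-1)}$. Computing $\Dt \sigma$ from its definition and substituting the momentum equation yields an equation of the schematic form
\begin{equation*}
    \pt \sigma + u\,\pr\sigma + (\text{lower order})\,\sigma = -\pvcons\cdot(\text{source in }\Dt u,\ \pr\Dt u),
\end{equation*}
so propagating $\sigma \in H^1(\Omega_\delta)$ reduces to bounding this source by the previous step plus the weighted estimate. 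Combining with the definition of $\sigma$ then produces the $H^2(\Omega_\delta)$ bound on $u$ and the $L^\infty_t L^2$ bound on $\rho^{-1/2}\pr^2 u$; the $H^2$ bound on $a(t)$ follows from differentiating $a'(t) = u(a(t),t)$ once more and using the trace of $\pt u$. The central difficulty is precisely this last step: the energy identity for $(\rho^{-1/2}\pr^2 u,\,\sigma)$ must close in spite of the degeneracy, which requires using the cancellation between $\pvcons\,\pr(\cdot)$ and the weight $\rho^{-1/2}$; spherical-geometry terms such as $2u/r$ need bookkeeping but are not singular since $r$ is bounded below on $\Omega_\delta$.
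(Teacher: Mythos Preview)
Your overall strategy---derive uniform a priori estimates on the approximate solutions and pass to the limit---is exactly what the paper does, and your identification of the effective viscous flux $\sigma$ as the organizing quantity near the boundary is right. But two of your concrete steps do not match the paper and, as written, do not obviously close.

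First, for \eqref{Thm2: 2} you test with $\Dt u$ and claim the good term is $\frac{d}{dt}\int_{\Omega_\delta}(\pr u+\tfrac{2u}{r})^2 r^2\,dr$. The paper carries out this computation in Lagrangian coordinates (Lemma~\ref{HigherEnergyEst2}), where the domain is fixed and $\Dt=\ptau$; after integrating by parts, the structure $\sigma\,\ptau(\sigma/\rho)$ emerges directly and the good term is $\frac{1}{2}\ptau\int\frac{\sigma^2}{\rho}\varphi\,dx$ (equivalently $\int\sigma^2 r^2\,dr$ in Eulerian). Your version differs from this by the $\rho^\gamma$ cross-terms in $\sigma^2=(\rho^\gamma-\pvcons\text{div}\,u)^2$, whose time derivatives bring in $\ptau\rho$; these can be absorbed, but they do not vanish, so your displayed inequality is not what the calculation actually produces. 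Relatedly, your pointwise bound $c_0'\le\rho\le C$ does not follow from Gronwall applied to the basic energy (which only gives $\text{div}\,u\in L^2_{t,x}$, not $L^1_tL^\infty_x$); the paper obtains it by integrating $\partial_t\log\rho+u\partial_r\log\rho$ along particle paths using the boundary condition $\sigma(a(t),t)=0$ (Lemma~\ref{rhobound}).

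Second, and more seriously, for \eqref{Thm2: 3} you propose to $\tau$-differentiate the momentum equation and then test against $\pr(\Dt u)$. The paper instead tests the $\tau$-differentiated equation against $\ptau u\,\psi$ (Lemma~\ref{HigherEnergyEst4}): this immediately produces $\tfrac{1}{2}\ptau\int(\ptau u)^2\psi\,dx$ on the left, and after rewriting $4\pi\partial_{\tau x}(r^2u)$ via $\ptau(\sigma/\rho)$ the dissipation term $\int\frac{(\ptau\sigma)^2}{\rho}\psi\,dx$ appears with a good sign. From $\ptau u\in L^\infty_\tau L^2$ one then reads $\px\sigma\in L^\infty_\tau L^2$ off the momentum equation, and the weighted bound on $\rho^{-1/2}\pr^2 u$ follows. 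Testing with $\pr(\Dt u)$ gives $\int\partial_\tau^2 u\cdot\partial_x\partial_\tau u\,dx$, which is not a time derivative of a positive quantity and does not pair with the viscous term to produce a coercive estimate; you have not explained how that closes. Either switch to the paper's multiplier $\ptau u$, or supply the missing identity that makes your choice work.
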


The proofs of Theorem \ref{Thm1: ExistenceOfGlobalWkSoln} and \ref{Thm2} rely on an approximate problem and several key uniform estimates. The approximate problem is that we restrict the spatial domain to $[\xi,a(t))$ for any small $\xi>0$, and a similar method as in \cite{G,CK} ensures the existence of solutions to the approximate problem. The uniform estimates we established allow us to take a subsequence convergent to a solution to (\ref{nsp3})-(\ref{boundary conditions}) when $\xi\rightarrow0$. The most standard one is the basic energy estimate. Similar to the approach in \cite{CCL}, we add Assumption \ref{cond11} to the initial data so that the negative gravitational energy can be absorbed by part of the internal energy. The higher space-time integrability of the density ensures the strong convergence of $\rhox$ near the origin, where we adapt the ideas used in \cite{JZ} to deal with compressible Navier-Stokes equations on $\mathbb{R}^n$. As for the region near the free boundary, we are able to obtain some estimates for the derivatives of $(\rho,u)$. But due to the singularity of the origin, based on our techniques, we do not obtain uniform estimates up to the origin. With the higher-order estimates near the free boundary, the strong convergence of $(\rhox,\ux)$ for the region near the free boundary follows from Arzela-Ascoli's theorem. Combining both regions of the domain, we get a solution to (\ref{nsp3})-(\ref{boundary conditions}).

The following theorem shows the algebraic expanding rate of the free boundary, which illustrates the strong instability of the Lane-Emden solutions for Navier-Stokes-Poisson equations. We remark that the strong instability is conditional because the existence of strong solutions (i.e., assuming all derivatives appearing in the estimates are continuous) still remains to be justified.
\begin{theorem}[Algebraic expanding rate of the free boundary]\label{Thm3}Let $T>0$ and $\gamma\in(\frac{6}{5},\frac{4}{3}]$. Suppose $(\rho,u,a)$ is a radially symmetric strong solution to (\ref{nsp3})-(\ref{boundary conditions}) with initial data $(\rho_0,u_0)$ satisfying (\ref{conditionsI}) and Assumption \ref{cond11}. Then
\begin{equation}\label{AlgExpandingRates}
    \begin{aligned}
         \left(\frac{1+t}{\eta+\frac{4}{3}\varepsilon}\right)^{\frac{1}{4}}&\lesssim a(t)\lesssim\left(\frac{t}{\eta+\frac{4}{3}\varepsilon}\right)^{\frac{1}{3}}\text{ for }\gamma=\frac{4}{3};\\
         a(t)\approx&\left(\frac{t}{\eta+\frac{4}{3}\varepsilon}\right)^{\frac{1}{3}}\text{ for }\gamma\in(\frac{6}{5},\frac{4}{3}).
    \end{aligned}
\end{equation}
    
\end{theorem}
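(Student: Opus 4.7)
\begin{Sketch}
The plan is to derive a virial-type identity and combine it with the a priori energy and mass bounds provided by Assumption \ref{cond11}. Multiplying the momentum equation in \eqref{nsp3} by $4\pi r^3$ and integrating on $[0,a(t)]$ with integration by parts (using $\rho(a(t),t)=0$, $u(a(t),t)=a'(t)$, and $\sigma(a(t),t)=0$), the viscous term collapses into a total time derivative of $a(t)^3$. The resulting identity reads
\begin{equation*}
\frac{d}{dt}\Bigl[V(t)+4\pi(\eta+\tfrac{4}{3}\varepsilon)\,a(t)^3\Bigr]=2K(t)+Q(t),
\end{equation*}
with $V(t):=4\pi\int_0^{a(t)}\rho u\,r^3\,dr$ and $K(t):=2\pi\int_0^{a(t)}\rho u^2\,r^2\,dr$ the kinetic energy. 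Integrating on $[0,t]$ yields the master identity
\begin{equation*}
V(t)+4\pi(\eta+\tfrac{4}{3}\varepsilon)\,a(t)^3=V(0)+4\pi(\eta+\tfrac{4}{3}\varepsilon)\,a_0^3+\int_0^t\bigl[2K(s)+Q(s)\bigr]\,ds. \tag{$\star$}
\end{equation*}

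For the upper bound, I would invoke the invariance of Assumption \ref{cond11} (preserved because viscous dissipation only decreases $E$) to obtain $0\le E(t)\le E_0$ and $Q(t)\ge 0$ throughout $[0,T]$. The algebraic relation $2K+Q=K+E+(3\gamma-4)U$, together with $3\gamma-4\le 0$ and $U\ge 0$, then gives $2K+Q\le 2E_0$ (with $K\le E_0$ following from $Q\ge 0$). Cauchy-Schwarz combined with $\int_0^{a(t)}\rho r^4\,dr\le M\,a(t)^2$ yields $|V(t)|\le\sqrt{2ME_0}\,a(t)$. Inserting both bounds into $(\star)$ produces
\begin{equation*}
4\pi(\eta+\tfrac{4}{3}\varepsilon)\,a(t)^3\le\sqrt{2ME_0}\,a(t)+2E_0\,t+O(1),
\end{equation*}
and the cubic eventually dominates the linear term, giving the upper bound $a(t)\lesssim(t/(\eta+\tfrac{4}{3}\varepsilon))^{1/3}$ in both regimes.

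For the lower bound in the subcritical range $\gamma\in(\frac{6}{5},\frac{4}{3})$, the plan is to prove $2K(t)+Q(t)\ge c_0>0$ uniformly in $t$, with $c_0=c_0(M,E_0)$ depending only on the conserved/dissipated invariants. Since $\mathcal{I}$ is defined by a strict inequality and the sublevel set $\{(\rho,u):M(\rho)=M(\rho_0),\,E(\rho,u)\le E_0\}\cap\mathcal{I}$ stays uniformly bounded away from $\partial\mathcal{I}$, a variational/compactness argument adapted from \cite{CCL} yields a positive infimum of $2K+Q$ on this set. Inserting into $(\star)$ and absorbing $|V(t)|\le\sqrt{2ME_0}\,a(t)$ gives $(\eta+\tfrac{4}{3}\varepsilon)\,a(t)^3\gtrsim c_0 t$, i.e.\ the matching lower bound $a(t)\gtrsim(t/(\eta+\tfrac{4}{3}\varepsilon))^{1/3}$.

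The hard part is the lower bound in the mass-critical case $\gamma=\frac{4}{3}$, where $2K+Q=K+E$ and uniform positivity fails because viscous dissipation can drive $E(t)\to 0$. The weaker rate $a(t)\gtrsim((1+t)/(\eta+\tfrac{4}{3}\varepsilon))^{1/4}$ must come from a more delicate balance. My plan is to combine $(\star)$ with the reversed Cauchy-Schwarz $K(t)\ge V(t)^2/(2M\,a(t)^2)$ and the global dissipation bound $\int_0^T D(s)\,ds\le E_0$ to produce an integro-differential inequality for $a(t)$; the fourth-root rate should then emerge after interpolation between the integrated kinetic energy and the total dissipation. This interpolation is the most technical step of the argument.
\end{Sketch}
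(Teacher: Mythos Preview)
Your virial identity $(\star)$ is correct and matches the paper's computation of $H''(t)$ (with $V=H'$). Your upper-bound argument and your lower-bound argument for $\gamma\in(\tfrac{6}{5},\tfrac{4}{3})$ are essentially the paper's: the paper invokes Corollary~\ref{cor1} to get $Q(\rho(t))\ge\Lambda>0$ directly, which is exactly the uniform lower bound you describe as coming from a variational/compactness argument of \cite{CCL}.

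The genuine gap is the mass-critical lower bound. Your proposed route---reversed Cauchy--Schwarz $K\ge V^2/(2Ma^2)$ plus the dissipation budget $\int_0^T D\le E_0$---does not produce the $\tfrac14$-rate, and the ``interpolation'' you allude to is not specified. The difficulty is that for $\gamma=\tfrac43$ the right-hand side of $(\star)$ is $\int_0^t[K(s)+E(s)]\,ds$, and both $K$ and $E$ can be driven to zero by dissipation, so no pointwise lower bound on $2K+Q$ is available; your reversed Cauchy--Schwarz only relates $K$ back to $V$ and $a$, which makes the inequality circular rather than closing it. The paper takes a quite different path: it introduces the weighted functional
\[
I(t)=H(t)-(1+t)H'(t)+(1+t)^2E(t)=\tfrac12\!\int_0^{a(t)}\!4\pi\rho\bigl(r-(1+t)u\bigr)^2r^2dr+(1+t)^2Q(\rho(t)),
\]
shows $(I(t)/(1+t))'\le 4\pi(\eta+\tfrac43\varepsilon)(a^3)'$, and hence $I(t)\lesssim(\eta+\tfrac43\varepsilon)(1+t)a^3(t)+(1+t)$. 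Combined with the sharp estimate $Q(\rho)\ge 3\bigl(1-(M/M_{ch})^{2/3}\bigr)\!\int\rho^{4/3}r^2dr$ from Corollary~\ref{cor1}, this yields $\int\rho^{4/3}r^2dr\lesssim(\eta+\tfrac43\varepsilon)a^3/(1+t)+(1+t)^{-1}$. The $\tfrac14$-power then falls out of H\"older, $M\lesssim\bigl(\int\rho^{4/3}r^2dr\bigr)^{3/4}a^{3/4}$. The key ingredients your sketch is missing are (i) the specific time-weighted functional $I(t)$, and (ii) the control of the \emph{internal} energy $\int\rho^{4/3}r^2dr$ rather than the kinetic energy; the fourth root is a consequence of H\"older applied to the mass, not of any dissipation interpolation.
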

To prove Theorem \ref{Thm3}, we use a Virial type argument. That is, we assume the existence of smooth radially symmetric solution and consider the quantity 
$$H(t)=\frac{1}{2}\int_0^{a(t)}4\pi\rho r^2\cdot  r^2dr\text{ for }\gamma\in(\frac{6}{5},\frac{4}{3})$$
or
$$I(t)=\frac{1}{2}\int_0^{a(t)}4\pi\rho(r-(1+t)u)^2 r^2dr+3(1+t)^2\int_0^{a(t)}4\pi\rho^{\frac{4}{3}}r^2dr\text{ for }\gamma=\frac{4}{3}.$$
By analyzing the first and second time derivatives of the functionals $H(t)$ and $I(t)$, we are able to obtain the algebraic expanding rate of the free boundary.
\begin{rmk}
    For $\gamma=\frac{4}{3}$, the Goldreich-Weber solution is a solution to (\ref{nsp3}) with $\eta=0$ and boundary condition $\rho^{\frac{4}{3}}-\frac{4}{3}\varepsilon(\pr u-\frac{u}{r})|_{r=a(t)}=0$, which is referred to as the continuity of normal stress (see, for example, in \cite{JT,LXZ1}) and different from the boundary condition in (\ref{boundary conditions}), and the solution admits a different expanding rate of support than (\ref{AlgExpandingRates}).
\end{rmk}
\begin{rmk}\label{DenDependVisc}
    We consider the following radially symmetric 3D Navier-Stokes-Poisson equations
\begin{equation*}
    \begin{cases}
        \partial_t\rho+\partial_r(\rho u)+\frac{2\rho u}{r}=0,\\
        \partial_t(\rho u)+\partial_r(\rho u^2+\rho^{\gamma})+\frac{2\rho u^2}{r}=\partial_r((\eta+\frac{4}{3}\varepsilon))\rho^{\alpha}(\partial_r u+\frac{2u}{r}))-2\varepsilon\pr\rho^{\alpha}\cdot\frac{2u}{r}-\frac{4\pi\rho}{r^2}\int_0^r\rho s^2 ds
    \end{cases}
\end{equation*}
with $0\le\alpha\le\gamma$, $\gamma\in(\frac{6}{5},\frac{4}{3})$, i.e., taking $\varepsilon(\rho)=\varepsilon\rho^{\alpha},\ \eta(\rho)=\eta\rho^{\alpha}$ with at least one of $\varepsilon,\ \eta$ strictly positive and another non-negative, and the boundary conditions
\begin{equation*}
    \sigma(a(t),t)=0,\ \rho(a(t),t)=0,\ u(0,t)=0,
\end{equation*}
where $\sigma:=\rho^{\gamma}-(\eta+\frac{4}{3}\varepsilon)\rho^{\alpha}(\partial_ru+\frac{2u}{r})$. Assume the initial data $(\rho_0,u_0)$ satisfies (\ref{conditionsI}) and $(\rho_0,u_0)\in\mathcal{I}$. If $(\rho,u,a)$ is a strong solution, we have a similar basic energy estimate
\begin{equation*}
    \begin{aligned}
        \dt E(t)&=\dt\left(\frac{1}{2}\int_0^{a(t)}4\pi\rho u^2 r^2dr+\frac{1}{\gamma-1}\int_0^{a(t)}4\pi\rho^{\gamma}r^2dr-4\pi\int_0^{a(t)}4\pi\rho r\left(\int_0^r\rho s^2ds\right)dr\right)\\
        &=-4\pi\int_0^{a(t)}\eta\rho^{\alpha}(\partial_ru+\frac{2u}{r})^2 r^2+\frac{4}{3}\varepsilon\rho^{\alpha}(\partial_ru-\frac{u}{r})^2 r^2dr.
    \end{aligned}
\end{equation*}
By a similar argument, we can prove
\begin{equation*}
\begin{aligned}
    (\frac{t}{\eta})^{\frac{1}{3(1-\alpha)}}\lesssim a(t)\text{ for }0\le\alpha<\frac{2}{3};\\
    t\lesssim a(t)\text{ for }\frac{2}{3}\le\alpha\le\gamma
\end{aligned}
\end{equation*}
when $\eta\not=0$, and
\begin{equation*}
    t\lesssim a(t)
\end{equation*}
when $\eta=0$. We can see that the expanding rate has a dependency on the coefficient of the bulk viscosity. Especially when $\alpha>0$ is small, whether or not the bulk viscosity vanishes changes the expanding rate. The proof is given in Subsection \ref{Proof of expanding rate for density-dependent viscosity}.

However, while there are many results on the global existence of weak solutions
for the free boundary compressible Navier-Stokes equations or the NSP equations
with density-dependent viscosity \cite{JXZ,GLX,DL}, the density changes
discontinuously on the vacuum free boundary in the solutions. The global
existence of weak solutions satisfying $\rho (a(t),t)=0$ still remains open. Meanwhile, the recent work of Chen, Zhang and Zhu \cite{CZZ2026} proved the global well-posedness of classical solutions to the vacuum free boundary problem for the degenerate compressible Navier-Stokes equations with $\rho (a(t),t)=0$ and with $\gamma\in(\frac{4}{3},\infty)$ for 2D and $\gamma\in(\frac{4}{3},3)$ in 3D for large spherically symmetric data.

\end{rmk}

The rest part is arranged as follows. In section \ref{Variational methods}, we summarize the results of variational methods from \cite{CCL}, which provide a control on the negative part of the basic energy, i.e., the gravitational energy, for $\gamma\in(\frac{6}{5},\frac{4}{3}]$. We also address the differences of the allowed initial data sets in \cite{KL} and \cite{CCL}, that is, the latter one allows a family of initial data near the Lane-Emden solution that are not covered by the former one. In Section \ref{Existence of global weak solutions}, we focus on the existence of global weak solutions. We first establish the basic energy estimate. Then we split the spatial domain into two parts. The first part is near the origin, while the second part is near the free boundary. We establish several uniform higher-order estimates to the approximate solutions in both parts of the domains, which are crucial in the compactness arguments. Then, we take a subsequence limit to get a global solution. In Section \ref{Expanding rate of the free boundary}, we assume the long time existence of strong solutions and establish the algebraic expanding rate of the free boundary.

   \section{Variational methods}
\label{Variational methods}
In this section, we first summarize the results from \cite{CCL}, then we compare the differences between the initial data sets in \cite{CCL} and \cite{KL}.

The following lemma summarizes the results from \cite{CCL}.
\begin{lem}\cite[Theorem 3.1, 4.2]{CCL}\label{LinLemma}
    Suppose $\rho\in L^1(\mathbb{R}^3)\cap L^{\frac{4}{3}}(\mathbb{R}^3)$, then the best constant $C_{min}$ in 
\begin{equation}            
\iint_{\mathbb{R}^3\times\mathbb{R}^3}\frac{\rho(\boldsymbol{x})\rho(\boldsymbol{y})}{|\boldsymbol{x}-\boldsymbol{y}|}\dbx\dby\le C_{min}\|\rho\|^{\frac{2}{3}}_{L^1(\mathbb{R}^3)}\|\rho\|^{\frac{4}{3}}_{L^{\frac{4}{3}}(\mathbb{R}^3)}
\end{equation}
is attained by the Lane-Emden solution $\rho_{\mu}$ for some $\mu>0$, and $ C_{min}=6M_{ch}^{-\frac{2}{3}}$, where $M_{ch}$ is the mass of $\rho_{\mu}$. 

Suppose $\rho\in L^1(\mathbb{R}^3)\cap L^{\gamma}(\mathbb{R}^3)$, where $\gamma\in(\frac{6}{5},\frac{4}{3})$, suppose $(\rho_{\mu},0)$ is a Lane-Emden solution with central density $\mu>0$, then the constraint minimizing problem 
\begin{equation}
    l_{\mu}=\inf_{\rho\in\mathcal{K}}S_{\mu}(\rho),
\end{equation}
where $S_{\mu}$ is defined as 
\begin{equation*}
     S_{\mu}(\rho)=\frac{1}{\gamma-1}\int_{\mathbb{R}^3}\rho^{\gamma} \dbx-\frac{1}{2}\iint_{\mathbb{R}^3\times\mathbb{R}^3}\frac{\rho(\boldsymbol{x})\rho(\boldsymbol{y})}{|\boldsymbol{x}-\boldsymbol{y}|}\dbx\dby-\varPhi(R_{\mu})\int_{\mathbb{R}^3}\rho  \dbx
\end{equation*}
and $\mathcal{K}:=\{\rho\ge0,\rho\not\equiv0:Q(\rho)=0\}$, is attained by $\rho_{\mu}$ (up to a translation).
\end{lem}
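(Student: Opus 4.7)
Since both statements are variational in nature, my plan is to treat each by a concentration--compactness argument together with symmetric decreasing rearrangement and the characterization of critical points via the Lane--Emden ODE.

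For the first statement (the sharp HLS--type inequality at $\gamma=\frac{4}{3}$), I would first observe that the ratio
\[
J(\rho):=\frac{\iint_{\mathbb{R}^3\times\mathbb{R}^3}\rho(\boldsymbol{x})\rho(\boldsymbol{y})|\boldsymbol{x}-\boldsymbol{y}|^{-1}\,\dbx\,\dby}{\|\rho\|_{L^1}^{2/3}\|\rho\|_{L^{4/3}}^{4/3}}
\]
is well--defined on $L^1\cap L^{4/3}(\mathbb{R}^3)\setminus\{0\}$ by Hardy--Littlewood--Sobolev, and is invariant under the two--parameter scaling $\rho(\boldsymbol{x})\mapsto c\rho(\lambda \boldsymbol{x})$. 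Therefore, along a maximizing sequence $\rho_n$ I can normalize $\|\rho_n\|_{L^1}=\|\rho_n\|_{L^{4/3}}=1$ and then, by Riesz's rearrangement inequality, replace each $\rho_n$ by its symmetric decreasing rearrangement without decreasing $J$. Lions' concentration--compactness lemma rules out vanishing (because $\|\rho_n\|_{L^{4/3}}$ is fixed and $J(\rho_n)$ stays bounded away from $0$) and rules out dichotomy (splitting strictly decreases the Coulomb energy while preserving the denominators), so up to translation and a rescaling we obtain a limit $\rho_\ast\in L^1\cap L^{4/3}$ attaining $C_{\min}$. The Euler--Lagrange equation of $J$ then reads $\rho_\ast^{1/3}=A\,(|\boldsymbol{x}|^{-1}\ast\rho_\ast)+B$ on $\{\rho_\ast>0\}$, which after the standard substitution is precisely the Lane--Emden ODE \eqref{ode} for $\gamma=\frac{4}{3}$; hence $\rho_\ast=\rho_\mu$ for some $\mu>0$. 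Finally, inserting $\rho_\mu$ back into $J$ and exploiting the scaling relations $M_\mu=M_1\mu^{(3\gamma-4)/2}$ (trivial at $\gamma=\frac{4}{3}$, so all $\rho_\mu$ have the same mass $M_{ch}$) pins down the numerical value $C_{\min}=6M_{ch}^{-2/3}$.

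For the second statement ($\frac{6}{5}<\gamma<\frac{4}{3}$), the strategy is the classical constrained minimization on the Pohozaev--type manifold $\mathcal{K}=\{Q(\rho)=0\}$. First I would check that $\mathcal{K}$ is non--empty and that $S_\mu$ is bounded below on it. The key algebraic identity is that at any critical point of $S_\mu$ restricted to $\mathcal{K}$, the Lagrange multiplier can be determined by testing against the scaling $\rho\mapsto\lambda^3\rho(\lambda\cdot)$: this identifies $S_\mu\big|_{\mathcal{K}}$ up to an explicit multiple with the internal--energy functional $\int\rho^\gamma$, in particular giving $l_\mu>0$. For a minimizing sequence $\rho_n\in\mathcal{K}$, symmetrization again lets me take each $\rho_n$ radial decreasing, after which concentration--compactness (with translations now trivial among radial functions) provides a strongly convergent subsequence in $L^1\cap L^\gamma$. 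The constraint $Q(\rho_n)=0$ passes to the limit by the strict subcritical nature of the Coulomb term against $L^{\gamma}$ for $\gamma>\frac{6}{5}$, yielding a minimizer $\rho_\ast\in\mathcal{K}$. Writing down its Euler--Lagrange equation with the Lagrange multipliers associated to the $Q$--constraint shows that $\rho_\ast^{\gamma-1}$ solves the Lane--Emden ODE, and the precise choice of the shift $\varPhi(R_\mu)$ in $S_\mu$ forces the central density of $\rho_\ast$ to be exactly $\mu$, so $\rho_\ast=\rho_\mu$ up to translation.

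The main obstacle in both parts is the standard loss of compactness caused by the translation and scaling invariances: dichotomy or mass escaping to infinity must be excluded before a minimizer can be extracted. In part one this is controlled by the strict superadditivity of the Coulomb energy under splitting, while in part two the subcriticality of the Sobolev exponent $\gamma>\frac{6}{5}$ and the constraint $Q=0$ together prevent mass from escaping. Once compactness is secured, identifying the minimizer with $\rho_\mu$ reduces to reading the Euler--Lagrange equation as the ODE \eqref{ode}, which is essentially a computation.
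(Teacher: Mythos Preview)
The paper does not give its own proof of this lemma: it is stated as a summary of results from \cite{CCL} (Theorems 3.1 and 4.2 there), so there is no in-paper argument to compare against. Your outline is the standard concentration--compactness/rearrangement route and is essentially what \cite{CCL} carries out, so at the level of strategy you are on target.

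That said, two steps in your sketch would need tightening if you were writing this out in full. In part one, ruling out dichotomy requires more than the slogan ``splitting strictly decreases the Coulomb energy'': the Coulomb energy is actually superadditive under splitting (the cross term is positive), so the numerator can only go up, not down; the real loss occurs in the denominator via the strict concavity of $t\mapsto t^{2/3}$ applied to the $L^1$-mass factor, and you should phrase it that way. In part two, the assertion that ``the precise choice of the shift $\varPhi(R_\mu)$ forces the central density to be exactly $\mu$'' skips the main computation: the Euler--Lagrange system carries a Lagrange multiplier from the constraint $Q=0$, and one must first show this multiplier vanishes (typically by testing the equation against the scaling direction and using $Q(\rho_\ast)=0$) before the equation reduces to the clean Lane--Emden form; only then does matching the additive constant with $\varPhi(R_\mu)$ identify the particular $\rho_\mu$.
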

We remark that when written in 3D Cartesian coordinates, the gravitational energy is
$$-4\pi\int_0^{a(t)}4\pi\rho r\int_0^r\rho s^2dsdr=-\frac{1}{2}\iint_{\mathbb{R}^3\times\mathbb{R}^3}\frac{\rho(\boldsymbol{x})\rho(\boldsymbol{y})}{|\boldsymbol{x}-\boldsymbol{y}|}\dbx\dby=-\frac{1}{8\pi}\intr|\nabla\varPhi|^2\dbx.$$
Then the following corollary, which is similar to Lemma 4.4, 4.6, 4.7 in \cite{CCL},  shows how Lemma \ref{LinLemma} can be applied to control the negative part of the basic energy.
\begin{cor}\label{cor1}
    For $\gamma=\frac{4}{3}$, suppose that there exists a strong solution $(\rho,\u)=(\rho(|\bx|,t),u(|\bx|,t)\frac{\bx}{|\bx|})$ to (\ref{nsp3})-(\ref{boundary conditions}) with compactly supported radially symmetric initial data $(\rho_0,\u_0)$ such that $\rho_0\in L^1(\mathbb{R}^3)\cap L^{\frac{4}{3}}(\mathbb{R}^3)$, $\sqrt{\rho_0}\u_0\in L^2(\mathbb{R}^3)$ and $M(\rho_0)<M_{ch}$. Let $B(0;a(t))$ be the support of $\rho(\bx,t)$ at time $t\ge0$.  Then
\begin{equation}
    3\intb\rho^{\frac{4}{3}}\dbx-\frac{1}{2}\iint_{\mathbb{R}^3\times\mathbb{R}^3}\frac{\rho(\boldsymbol{x})\rho(\boldsymbol{y})}{|\boldsymbol{x}-\boldsymbol{y}|}\dbx\dby\ge3\left(1-(\frac{M}{M_{ch}})^{\frac{2}{3}}\right)\intb\rho^{\frac{4}{3}}\dbx,
\end{equation}
and
\begin{equation}
    \intb\rho^{\frac{4}{3}}\dbx\le CE_0
\end{equation}
for some constant $C>0$ 

For $\gamma\in(\frac{6}{5},\frac{4}{3})$, suppose the initial data $(\rho_0,\u_0)$ is compactly supported radially symmetric, $\rho_0\in L^1(\mathbb{R}^3)\cap L^{\gamma}(\mathbb{R}^3)$, $\sqrt{\rho_0}\u_0\in L^2(\mathbb{R}^3)$ and $(\rho_0,\u_0)\in\mathcal{I}$, where $\mathcal{I}$ is defined by (\ref{Linmass}), then there exist a $\mu>0$ such that 
$$(\rho_0,\u_0)\in\mathcal{I}_{\mu}:=\{(\rho,\u):Q(\rho)>0,E(\rho,\u)-\varPhi_{\mu}(R_{\mu})M(\rho)<l_{\mu}\}.$$ 
What's more, if there exists a strong solution $(\rho,\u)$ to (\ref{nsp3})-(\ref{boundary conditions}) with initial data $(\rho_0,\u_0)$, then 
$$(\rho(t),\u(t))\in\mathcal{I}_{\mu}$$
for any $t\ge0$, and there exists a constant $\Lambda>0$ such that
\begin{equation}
    Q(\rho(t))>\Lambda.
\end{equation}
\end{cor}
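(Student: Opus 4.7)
Both cases rest on two facts: mass conservation $M(\rho(t))=M_0$ and the energy inequality $E(\rho(t),u(t))\le E_0$ from viscous dissipation; the key analytic input is Lemma~\ref{LinLemma}. For $\gamma=\frac{4}{3}$, I would apply Lemma~\ref{LinLemma} directly with $\|\rho(t)\|_{L^1}=M_0$ and $\|\rho(t)\|_{L^{4/3}}^{4/3}=\intb\rho^{4/3}\dbx$, which yields
$$\tfrac12\iint_{\mathbb{R}^3\times\mathbb{R}^3}\tfrac{\rho(\bx)\rho(\boldsymbol{y})}{|\bx-\boldsymbol{y}|}\dbx\dby\le 3(M_0/M_{ch})^{2/3}\intb\rho^{4/3}\dbx.$$
Subtracting from $3\intb\rho^{4/3}\dbx$ gives the first inequality, and since $M_0<M_{ch}$ the coefficient $1-(M_0/M_{ch})^{2/3}$ is strictly positive; dropping the non-negative kinetic term in $E$ and using $E(\rho(t),u(t))\le E_0$ then gives the second bound with $C=[3(1-(M_0/M_{ch})^{2/3})]^{-1}$.

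\emph{The case $\gamma\in(\frac{6}{5},\frac{4}{3})$: selection of $\mu$ and invariance.} The Lane-Emden scaling identities give $-\varPhi_\mu(R_\mu)=M_\mu/R_\mu=(M_1/R_1)\mu^{\gamma-1}$ and $l_\mu=l_1\mu^{(5\gamma-6)/2}$, so the condition $(\rho_0,\u_0)\in\mathcal{I}_\mu$ reduces (given $Q(\rho_0)>0$) to $E_0<f(\mu):=l_1\mu^{(5\gamma-6)/2}-(M_1/R_1)M_0\mu^{\gamma-1}$. Because $3\gamma-4<0$, $f$ has a unique interior global maximum at an explicit $\mu^*$, and a direct computation identifies $\max_\mu f(\mu)>E_0$ exactly with the mass-energy inequality defining $\mathcal{I}$ in \eqref{Linmass}, supplying the desired $\mu$. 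For the invariance, the identity $E(\rho,\u)-\varPhi_\mu(R_\mu)M(\rho)=\tfrac12\int\rho|\u|^2\,\dbx+S_\mu(\rho)$, together with mass conservation and energy dissipation, gives $S_\mu(\rho(t))\le E_0-\varPhi_\mu(R_\mu)M_0<l_\mu$ for all $t\ge0$. To preserve $Q(\rho(t))>0$ I would argue by contradiction: if $t_0$ is the first time $Q(\rho(t_0))=0$, then $\rho(t_0)\in\mathcal{K}$ and Lemma~\ref{LinLemma} forces $S_\mu(\rho(t_0))\ge l_\mu$, contradicting the strict inequality just derived.

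\emph{Uniform lower bound on $Q$; main obstacle.} Set $\delta:=l_\mu-(E_0-\varPhi_\mu(R_\mu)M_0)>0$, a deficit independent of $t$, so that $S_\mu(\rho(t))\le l_\mu-\delta$ along the flow. I would introduce the mass-preserving rescaling $\rho_\lambda(\bx):=\lambda^3\rho(\lambda\bx)$; whenever $Q(\rho(t))>0$ the equation $Q(\rho_{\lambda^*})=0$ has a unique solution $\lambda^*(\rho(t))>1$, and $\rho_{\lambda^*}\in\mathcal{K}$ yields $S_\mu(\rho_{\lambda^*})-S_\mu(\rho(t))\ge\delta$. Using the relation $3(\lambda^*)^{3\gamma-4}\int\rho^\gamma\,\dbx=\iint\rho(\bx)\rho(\boldsymbol{y})/|\bx-\boldsymbol{y}|\,\dbx\,\dby$ derived from $Q(\rho_{\lambda^*})=0$, the left-hand side reduces to an explicit function of $\lambda^*$ and $\int\rho^\gamma\,\dbx$ that vanishes as $\lambda^*\to1^+$, i.e.\ as $Q(\rho(t))\to 0^+$; combined with uniform upper bounds on $\int\rho^\gamma\,\dbx$ and $\iint\rho(\bx)\rho(\boldsymbol{y})/|\bx-\boldsymbol{y}|\,\dbx\,\dby$ (which follow from energy dissipation and the invariance already established), this forces $Q(\rho(t))\ge\Lambda>0$ for some $\Lambda$ depending only on the initial data. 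I expect this quantitative step to be the main obstacle: the soft contradiction used for the invariance yields only $Q>0$ pointwise, and converting the uniform deficit $\delta$ into a positive lower bound on $Q$ requires both the explicit dependence of $S_\mu(\rho_{\lambda^*})-S_\mu(\rho)$ on $\lambda^*-1$ and the uniform a priori bounds on $\int\rho^\gamma$ and $\iint\rho\rho/|\bx-\boldsymbol{y}|$ along the flow.
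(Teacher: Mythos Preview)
Your treatment of the $\gamma=\tfrac{4}{3}$ case, the selection of $\mu$ via maximizing $f(\mu)=l_1\mu^{(5\gamma-6)/2}-(M_1/R_1)M_0\mu^{\gamma-1}$, and the invariance of $\mathcal{I}_\mu$ by continuity of $Q$ together with the constrained minimum $l_\mu$ are all correct and coincide with the argument the paper imports from \cite{CCL} (Lemmas~4.4, 4.6 there). The paper gives no independent proof of Corollary~\ref{cor1}, so there is nothing further to compare on these points.

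The gap is exactly where you flag it, but it is sharper than your sketch suggests. From the rescaling you obtain $S_\mu(\rho_{\lambda^*})-S_\mu(\rho(t))=h(\lambda^*)\int\rho^\gamma\ge\delta$ with $h(1)=0$, and the upper bound $\int\rho^\gamma\le\frac{\gamma-1}{4-3\gamma}E_0$ then yields $\lambda^*\ge\lambda_0>1$. But this does \emph{not} force $Q(\rho(t))=3\bigl(1-(\lambda^*)^{3\gamma-4}\bigr)\int\rho^\gamma$ to stay away from zero: nothing you have written rules out $\int\rho^\gamma(t)\to 0$ with $\lambda^*(t)\to\infty$. In that regime $h(\lambda^*)\sim\tfrac{4-3\gamma}{\gamma-1}(\lambda^*)^{3\gamma-3}\to\infty$, so $h(\lambda^*)\int\rho^\gamma\ge\delta$ remains compatible with $\int\rho^\gamma\to 0$; meanwhile $S_\mu(\rho(t))\to -\varPhi_\mu(R_\mu)M_0\le l_\mu-\delta$ (since $E_0\ge 0$), so the invariance inequality is not violated either. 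Upper bounds on $\int\rho^\gamma$ and on the gravitational energy alone therefore do not close the argument; you need either an a priori lower bound on $\int\rho^\gamma$ along the flow or an upper bound on $\lambda^*$. The proof in \cite{CCL} (Lemma~4.7) supplies precisely this missing coercivity; your outline should invoke it rather than claim the conclusion follows from the bounds already in hand.
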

 
 For $\gamma\in(\frac{6}{5},\frac{4}{3})$, we claim that the initial data set $\mathcal{I}$ includes some initial data $(\rho_0,\u_0)$ that are near the Lane-Emden solution, which are not allowed in the initial data set in \cite{KL}. In fact, the approach in \cite{KL} require $M(\rho_0)<M_c$, where
\begin{equation}\label{ZAMPmass}
    M<M_c=\left(\frac{4-3\gamma}{\gamma-1}(\frac{B}{3})^{-\frac{3(\gamma-1)}{4-3\gamma}}\right)^{\frac{4-3\gamma}{5\gamma-6}}(\frac{\tilde{E_0}}{4\pi})^{-\frac{4-3\gamma}{5\gamma-6}}
\end{equation}
with
\begin{equation*}
    \tilde{E_0}=4\pi\int_0^{a_0}\frac{1}{2}\rho_0u_0^2 r^2+\frac{1}{\gamma-1}\rho_0^{\gamma} r^2dr,
\end{equation*}
which leads to
\begin{equation}\label{KLcondition}
\begin{aligned}
    3\int_0^{a_0}\rho^{\gamma}_0r^2dr-4\pi\int_0^{a_0}\rho_0 r\int_0^r\rho_0 s^2dsdr
        \ge C_0M^{\frac{5\gamma-6}{3(\gamma-1)}}\left(\int_0^{a_0}\rho^{\gamma}_0r^2dr\right)^{\frac{1}{3(\gamma-1)}}
\end{aligned}
\end{equation}
for some positive constant $C_0$. We justify (\ref{KLcondition}) in Appendix \ref{Derivation of (KLcondition)}.

From the expressions of (\ref{Linmass}) and (\ref{ZAMPmass}), we can see that for any initial data $(\rho_0,u_0)$ with $\frac{1}{2}\int_0^{a_0}\rho_0u_0^2r^2dr\gg\frac{1}{\gamma-1}\int_0^{a_0}\rho_0^{\gamma}r^2dr$, both upper bounds of mass are small, having the same order.

For initial data with small kinetic energy, we claim that the set (\ref{Linmass}) covers some initial data that are not included in the set (\ref{ZAMPmass}). In fact, suppose $\rho_{\mu}$ is a Lane-Emden solution with central density $\mu>0$. Let $\rho_{\lambda,\mu}=\lambda^3\rho_{\mu}(\lambda r)$, which is the mass preserving scaling. Since
\begin{equation*}
    \frac{d}{d\lambda}S_{\mu}(\rho_{\lambda,\mu})=3\lambda^{3\gamma-4}\intr\rho^{\gamma}_{\mu}\dbx-\frac{1}{2}\iint_{\mathbb{R}^3\times\mathbb{R}^3}\frac{\rho_{\mu}(\bx)\rho_{\mu}(\boldsymbol{y})}{|\boldsymbol{x}-\boldsymbol{y}|}\dbx\dby=\frac{1}{\lambda}Q(\rho_{\lambda,\mu}),
\end{equation*}
and we have $\frac{1}{\lambda}Q(\rho_{\lambda,\mu})>0$ when $\lambda<1$, $\frac{1}{\lambda}Q(\rho_{\lambda,\mu})<0$ when $\lambda>1$. So for $\lambda<1$,
\begin{equation}\label{mono}
    S_{\mu}(\rho_{\lambda,\mu})<S_{\mu}(\rho_{\mu})=l_{\mu}.
\end{equation}
Combine with the fact that $Q(\rho_{\mu})=0$, we have
\begin{equation}
    0<Q(\rho_{\lambda,\mu})<4\pi\frac{4-3\gamma}{\gamma-1}(1-\lambda^{3\gamma-3})\int_0^{R_{\mu}}\rho_{\mu}^{\gamma}r^2dr.
\end{equation}
When $\lambda<1$ is closed to 1, $Q(\rho_{\lambda,\mu})$ can be arbitrarily small. Since the support of $\rho_{\lambda,\mu}$ is $[0,\frac{R_{\mu}}{\lambda}]$, we choose $u_0=u^{\lambda}(r)$ supported on $[0,\frac{R_{\mu}}{\lambda}]$ small enough such that
\begin{equation}
    \begin{aligned}
        E(\rho_{\lambda,\mu},u_0)-\varPhi_{\mu}(R_{\mu})M(\rho_{\lambda,\mu})&=4\pi\int_0^{\frac{R_{\mu}}{\lambda}}\frac{1}{2}\rho_{\lambda,\mu}u_0^2r^2dr+S_{\mu}(\rho_{\lambda,\mu})<S_{\mu}(\rho_{\mu})=l_{\mu},
    \end{aligned}
\end{equation}
we have $(\rho_{\lambda,\mu},u_0)\in\mathcal{I}$. Notice that $(\rho_{\lambda,\mu},u_0)$ doesn't satisfy (\ref{KLcondition}), thus it is not covered by the initial data set required by (\ref{ZAMPmass}).

   \section{Existence of global weak solutions}
\label{Existence of global weak solutions}
In this section, we focus on the construction of radially symmetric global weak solutions to PDEs (\ref{nsp1}). We first consider the following approximate problem:
\begin{equation}
    \begin{cases}\label{apprE1}
        \partial_t\rho+\frac{1}{r^2}\partial_r(r^2\rho u)=0,\\
        \rho(\partial_t u+u\partial_ru)+\partial_r\rho^{\gamma}=-\frac{4\pi\rho}{r^2}\int_0^r\rho(s,t)s^2ds+\partial_r\left(\frac{\frac{4}{3}\varepsilon+\eta}{r^2}\partial_r(r^2u)\right),
    \end{cases}
\end{equation}
for $(r,t)\in\{(r,t):\xi\le r\le a(t),0\le t\le T\}$ with the initial data and boundary condition for any fixed small $\xi>0$,
\begin{equation}\label{apprE2}
    (\rho^{\xi},u^{\xi})(r,0)=(\rho^{\xi}_0,u^{\xi}_0),\ \xi\le r\le a_0,
\end{equation}
\begin{equation}\label{apprE3}
    u^{\xi}(\xi,t)=0,\ \sigma^\xi(a(t),t)=\left((\rho^{\xi})^{\gamma}-(\eta+\frac{4}{3}\varepsilon)(\partial_ru^{\xi}+\frac{2u^{\xi}}{r})\right)(a(t),t)=0,\ t>0,
\end{equation}
\begin{equation}\label{apprE4}
    (a^{\xi})'(t)=u^{\xi}(a^{\xi}(t),t),\ a^{\xi}(0)=a_0.
\end{equation}
where
\begin{equation}\label{apprEcond1}
    \begin{aligned}
    \inf_{r\in[\xi,a_0)}\rhox_0(r)>0,\ &\ux_0(\xi)=0,\\
    \rhox_0\in L^1([\xi,a_0),r^2dr)&\cap L^{\gamma}([\xi,a_0),r^2dr),
    \\(\rhox_0)^k\in H^1([\xi,a_0),r^2dr)\ (0<k\le\gamma&-\frac{1}{2}),\ \ux_0\in H^1([\xi,a_0),r^2dr),
    \end{aligned}
    \end{equation}
\begin{equation}\label{apprEcond2}
\left((\rhox_0)^{\gamma}-(\eta+\frac{4}{3}\varepsilon)(\partial_r\ux_0+\frac{2\ux_0}{r})\right)(a_0)=0.
\end{equation}

If written in Lagrangian coordinates, the problem is changed to:

\begin{equation}\label{apprL1}
    \begin{cases}
        \partial_{\tau}\rho+4\pi\rho^2\partial_x(r^2u)=0,\\
        \partial_{\tau}u+4\pi r^2\partial_x\rho^{\gamma}+\frac{x}{r^2}=16\pi^2r^2\partial_x\left((\frac{4\varepsilon}{3}+\eta)\rho\partial_x(r^2u)\right),
    \end{cases}
\end{equation}
with initial data and boundary conditions
\begin{equation}\label{apprL2}
    (\rho^{\xi},u^{\xi})(x,0)=(\rho_0^{\xi},u_0^{\xi}),\ 0\le x\le M,
\end{equation}
\begin{equation}\label{apprL3}
    u^{\xi}(0,\tau)=0,\ \sigma^\xi(M,\tau)=\left((\rho^{\xi})^{\gamma}-(\eta+\frac{4}{3}\varepsilon)(4\pi r^2\rho^{\xi}\partial_xu^{\xi}+\frac{2u^{\xi}}{r})\right)(M,\tau)=0,
\end{equation}
\begin{equation}\label{apprL4}
    (a^{\xi})'(\tau)=u^{\xi}(M,\tau),\ a^{\xi}(0)=a_0, 
\end{equation}
with
\begin{equation}\label{apprLcond1}
    \begin{aligned}
    \inf_{x\in[0,M]}\rhox_0(x)>0,\ &\ux_0(0)=0,\\
        \rhox_0\in L^{\gamma-1}&([0,M]),
    \\(\rhox_0)^q\in H^1([0,M])\ (\frac{1}{2}<q\le\gamma)&,\ \sqrt{\rhox_0} r^2\partial_x\ux_0,\ \frac{\ux_0}{\sqrt{\rhox_0}r}\in L^2([0,M]),
    \end{aligned}
\end{equation}
\begin{equation}\label{apprLcond2}
    \left((\rhox_0)^{\gamma}-(\eta+\frac{4}{3}\varepsilon)(4\pi r^2\rhox_0\partial_x\ux_0+\frac{2\ux_0}{r})\right)(M)=0.
\end{equation}
where
\begin{equation}
    \begin{cases}
        x(r,t)=4\pi\int_{\xi}^r\rho(s,t)s^2ds=M-4\pi\int_r^{a(t)}\rho(s,t)s^2ds,\\
        \tau(r,t)=t.
    \end{cases}
\end{equation}

\subsection{A priori estimates}\label{A priori estimates}
Assume $T>0$ and $\gamma\in(\frac{6}{5},\frac{4}{3}]$. Let $(\rhox,\ux,a^{\xi})$ be any strong solution to problem (\ref{apprE1})-(\ref{apprE4}), with initial data satisfying (\ref{cond1}) and (\ref{apprEcond1})-(\ref{apprEcond2}). For convenience, we drop the superscript $\xi$ in this subsection.

\subsubsection{Basic energy estimate}
\label{Basic energy estimate}


\begin{lem}\label{BasicEnergyEst}
Let $T>0$, $\gamma\in(\frac{6}{5},\frac{4}{3}]$, and $(\rho,u,a)$ is a strong solution to problem (\ref{apprE1})-(\ref{apprE4}) with initial data $(\rho_0,u_0)$ satisfying $\rho_0\in L^1([\xi,a_0);r^2dr)\cap L^{\gamma}([\xi,a_0);r^2dr)$, $\sqrt{\rho_0}u_0\in L^2([\xi,a_0);r^2dr)$ and (\ref{cond1}). Let
        \begin{equation}
            E(\rho(t),u(t))=4\pi\int_{\xi}^{a(t)}\frac{1}{2}\rho u^2 r^2+\frac{1}{\gamma-1}\rho^{\gamma} r^2dr-4\pi\int_{\xi}^{a(t)}4\pi\rho r\left(\int_{\xi}^r\rho s^2ds\right)dr,
        \end{equation}
    then for any $t\in[0,T]$, there holds:
        \begin{equation}\label{EnInq}
             E(\rho(t),u(t))+4\pi\int_0^t\int_{\xi}^{a(s)}(\eta+\frac{4}{3}\varepsilon)(\partial_ru+\frac{2u}{r})^2 r^2drds=E(\rho_0,u_0).
        \end{equation}
For the gravitational energy term, it can also be written as follows
\begin{equation}\label{GId}
    4\pi\int_{\xi}^{a(t)}4\pi\rho r\left(\int_{\xi}^r\rho s^2ds\right)dr=\frac{1}{2a(t)}\left(4\pi\int_{\xi}^{a(t)}\rho r^2dr\right)^2+\int_{\xi}^{a(t)}\frac{1}{2r^2}\left(4\pi\int_{\xi}^r\rho s^2ds\right)^2dr.
\end{equation}
\end{lem}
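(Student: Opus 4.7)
The plan is first to verify the purely algebraic identity (\ref{GId}) by a single integration by parts, then to derive (\ref{EnInq}) by multiplying the momentum equation in (\ref{apprE1}) by $u$, integrating over the fluid domain with weight $4\pi r^2 dr$, and carefully matching the boundary contributions using the free-boundary conditions (\ref{apprE3})--(\ref{apprE4}). I will work in Lagrangian coordinates (\ref{apprL1})--(\ref{apprL4}) to perform the energy computation, since the domain $[0,M]$ is fixed in $x$ and all Reynolds-transport contributions are absorbed into boundary integration-by-parts terms that cancel cleanly.

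For (\ref{GId}), I would introduce the enclosed-mass function $m(r,t):=4\pi\int_\xi^r \rho(s,t)s^2 ds$, so that $\pr m=4\pi\rho r^2$, $m(\xi,t)=0$, and $m(a(t),t)=M$ is independent of $t$ by conservation of mass (verified by differentiating under the integral and using continuity together with $u(\xi,t)=0$ and $a'(t)=u(a(t),t)$). Writing $\rho\, r\cdot m = \tfrac{1}{r}\cdot m\,\pr m = \tfrac{1}{2r}\pr(m^2)/(4\pi)$ and integrating by parts on $[\xi,a(t)]$ gives
\begin{equation*}
4\pi\int_\xi^{a(t)}4\pi\rho r\int_\xi^r\rho s^2 ds\,dr=\frac{m^2}{2r}\Big|_\xi^{a(t)}+\int_\xi^{a(t)}\frac{m^2}{2r^2}dr=\frac{M^2}{2a(t)}+\int_\xi^{a(t)}\frac{m^2}{2r^2}dr,
\end{equation*}
which is exactly (\ref{GId}).

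For (\ref{EnInq}), I would rewrite the energy using $dx=4\pi\rho r^2 dr$ and $x(a(t),t)=M$ as $E=\int_0^M\bigl[\tfrac12 u^2+\tfrac{1}{\gamma-1}\rho^{\gamma-1}-x/r\bigr]\,dx$. Differentiating in $\tau$ and invoking $\ptau r=u$, the continuity equation $\ptau\rho=-4\pi\rho^2\px(r^2 u)$, and the momentum equation in (\ref{apprL1}), the Poisson forcing $x/r^2$ in the momentum equation cancels with the contribution $xu/r^2$ coming from $\ptau(-x/r)$, and what remains is
\begin{equation*}
\frac{dE}{d\tau}=-4\pi\!\int_0^M\!\!\bigl[r^2 u\,\px\rho^\gamma+\rho^\gamma\px(r^2 u)\bigr]dx+16\pi^2\!\int_0^M\!\! r^2 u\,\px\!\bigl(\pvcons\rho\,\px(r^2 u)\bigr)dx.
\end{equation*}
The first integrand is the exact $x$-derivative $\px(r^2 u\rho^\gamma)$, so it contributes only a boundary term $-4\pi\bigl[r^2 u\rho^\gamma\bigr]_0^M$. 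Integration by parts on the viscous integral produces the boundary term $16\pi^2\bigl[r^2 u\pvcons\rho\,\px(r^2 u)\bigr]_0^M$ plus the negative-definite bulk term $-16\pi^2\int_0^M\pvcons\rho[\px(r^2 u)]^2 dx$.

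The cancellation of boundary terms is the key step: at $x=0$ both vanish because $u(0,\tau)=0$, and at $x=M$ the stress-free boundary condition $\sigma(M,\tau)=\rho^\gamma-4\pi\pvcons\rho\,\px(r^2 u)=0$ in (\ref{apprL3}) makes the pressure boundary $-4\pi r^2 u\rho^\gamma$ exactly cancel the viscous boundary $16\pi^2 r^2 u\pvcons\rho\,\px(r^2 u)$. Finally, using $\px(r^2 u)=(4\pi\rho)^{-1}(\pr u+2u/r)$ together with $dx=4\pi\rho r^2 dr$, the remaining dissipation becomes $-4\pi\int_\xi^{a(\tau)}\pvcons(\pr u+2u/r)^2 r^2 dr$, and integrating in time from $0$ to $t$ yields (\ref{EnInq}). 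The main obstacle is the bookkeeping of boundary terms: an Eulerian proof would split them between Reynolds-flux contributions at $r=a(t)$ (via $a'(t)=u(a(t),t)$) and integration-by-parts residues, whereas the Lagrangian choice packages everything into the pressure and viscous boundary integrations where the condition $\sigma=0$ makes cancellation transparent.
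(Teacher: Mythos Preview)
Your proof is correct. For (\ref{GId}) your integration by parts via the enclosed mass $m(r,t)$ is identical to the paper's. For (\ref{EnInq}) you take a genuinely different route: you work in Lagrangian coordinates and differentiate the energy $\int_0^M\bigl[\tfrac12 u^2+\tfrac{1}{\gamma-1}\rho^{\gamma-1}-x/r\bigr]\,dx$ directly in $\tau$, whereas the paper stays in Eulerian coordinates, multiplies the momentum equation by $ur^2$, integrates over $[\xi,a(t))$, and treats each term separately with the continuity equation and the Leibniz rule at the moving boundary $r=a(t)$. Your route buys cleaner bookkeeping---the fixed $x$-domain means all boundary terms arise from a single integration by parts, and $\sigma(M,\tau)=0$ makes the cancellation immediate---while the paper's Eulerian computation must track separate boundary contributions from Reynolds transport and from integration by parts before the same $\sigma=0$ condition collapses them. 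The paper in fact records the Lagrangian form of the identity in the remark following the lemma, but elects to carry out the proof in Eulerian form.
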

\begin{proof}
    Multiply the momentum equation in (\ref{apprE1}) by $ur^2$ and integrate it over $[\xi,a(t))$ to get
    \begin{equation}\label{BEterm1}
    \begin{aligned}
        \int_{\xi}^{a(t)}\rho(\partial_t u+u\partial_ru)ur^2dr+&\int_{\xi}^{a(t)}\partial_r\rho^{\gamma}\cdot ur^2dr\\
        &=-\int_{\xi}^{a(t)}4\pi\rho u\int_{\xi}^r\rho s^2dsdr+\int_{\xi}^{a(t)}ur^2\partial_r\left(\frac{\frac{4}{3}\varepsilon+\eta}{r^2}\partial_r(r^2u)\right)dr.
    \end{aligned}
    \end{equation}
    For the first term on the left-hand side of (\ref{BEterm1}), we have
    \begin{equation}
        \begin{aligned}
             &\int_{\xi}^{a(t)}\rho(\partial_t u+u\partial_ru)ur^2dr\\
             =&\dt\int_{\xi}^{a(t)}\frac{1}{2}\rho u^2r^2dr-\frac{1}{2}\rho u^3r^2|_{r=a(t)}-\int_{\xi}^{a(t)}\frac{1}{2}u^2r^2\partial_t\rho dr+\int_{\xi}^{a(t)}\rho u^2\partial_ru\cdot r^2dr.
        \end{aligned}
    \end{equation}
    Using the mass equation in (\ref{apprE1}), we have
    \begin{equation}
        \begin{aligned}
            -\int_{\xi}^{a(t)}\frac{1}{2}u^2r^2\partial_t\rho dr&=\int_{\xi}^{a(t)}\frac{1}{2}\partial_r\rho\cdot u^3r^2dr+\int_{\xi}^{a(t)}\frac{1}{2}u^2\rho\cdot(r^2\partial_ru+2ru)dr=I+II.
        \end{aligned}
    \end{equation}
    By integration by part,
    \begin{equation}
    \begin{aligned}
        II
        =&-\int_{\xi}^{a(t)}\frac{1}{2}\partial_r\rho\cdot u^3r^2+\rho u^2\partial_ru\cdot r^2dr+\frac{1}{2}\rho u^3r^2|_{r=a(t)}.
    \end{aligned}
    \end{equation}
    As a result,
    \begin{equation}\label{Left1}
        \int_{\xi}^{a(t)}\rho(\partial_t u+u\partial_ru)ur^2dr=\dt\int_{\xi}^{a(t)}\frac{1}{2}\rho u^2r^2dr.
    \end{equation}
    
     For the second term on the left-hand side of (\ref{BEterm1}), integrating by part and using the mass equation in (\ref{apprE1}), we have
     \begin{equation}\label{Left2}
     \begin{aligned}
          \int_{\xi}^{a(t)}\partial_r\rho^{\gamma}\cdot ur^2dr
          =&\dt\int_{\xi}^{a(t)}\frac{1}{\gamma-1}\rho^{\gamma}r^2dr+\rho^{\gamma}ur^2|_{r=a(t)}.
     \end{aligned}
     \end{equation}
     
    For the first term on the right-hand side of (\ref{BEterm1}), by using the mass equation, we have
    \begin{equation}
    \begin{aligned}
        \int_{\xi}^{a(t)}-4\pi\rho u\left(\int_{\xi}^r\rho s^2ds\right)dr=&\int_{\xi}^{a(t)}2\pi(r\partial_t\rho+r\partial_r(\rho u))\left(\int_{\xi}^r\rho s^2ds\right)dr\\
        =&\dt\int_{\xi}^{a(t)}2\pi\rho r\left(\int_{\xi}^r\rho s^2ds\right)dr-\int_{\xi}^{a(t)}2\pi\rho u\left(\int_{\xi}^r\rho s^2ds\right)dr,
    \end{aligned}
    \end{equation}
    which gives
    \begin{equation}\label{right1}
        \begin{aligned}
            \int_{\xi}^{a(t)}-4\pi\rho u\left(\int_{\xi}^r\rho s^2ds\right)dr=\dt\int_{\xi}^{a(t)}4\pi\rho r\left(\int_{\xi}^r\rho s^2ds\right)dr.
        \end{aligned}
    \end{equation}
    
    For the second term on the right-hand side of (\ref{BEterm1}), by integration by part,
    \begin{equation}\label{right2}
        \begin{aligned}
            \int_{\xi}^{a(t)}ur^2&\partial_r\left(\frac{\frac{4}{3}\varepsilon+\eta}{r^2}\partial_r(r^2u)\right)dr\\&=-\int_{\xi}^{a(t)}(\eta+\frac{4}{3}\varepsilon)(\partial_ru+\frac{2u}{r})^2r^2dr+(\frac{4}{3}\varepsilon+\eta)(\partial_ru+\frac{2u}{r})ur^2|_{r=a(t)}.
        \end{aligned}
    \end{equation}
    Combining (\ref{Left1}), (\ref{Left2}), (\ref{right1}), (\ref{right2}), we get

    \begin{equation}
    \begin{aligned}
        &\dt\left(4\pi\int_{\xi}^{a(t)}\frac{1}{2}\rho u^2  r^2+\frac{1}{\gamma-1}\rho^{\gamma}r^2dr-4\pi\int_{\xi}^{a(t)}4\pi\rho r\left(\int_{\xi}^r\rho s^2ds\right)dr\right)\\
        =&-4\pi\int_{\xi}^{a(t)}(\eta+\frac{4}{3}\varepsilon)(\partial_ru+\frac{2u}{r})^2r^2dr,
    \end{aligned}
    \end{equation}
    which gives (\ref{EnInq}).
    
    Also,
    \begin{equation}
        \begin{aligned}
            4\pi\int_{\xi}^{a(t)}\rho r\left(\int_{\xi}^r\rho s^2ds\right)dr=&4\pi\int_{\xi}^{a(t)}\frac{1}{2r}d\left[\left(\int_{\xi}^r\rho s^2ds\right)^2\right]\\
            =&\frac{2\pi}{a(t)}\left(\int_{\xi}^{a(t)}\rho s^2ds\right)^2+\int_{\xi}^{a(t)}\frac{2\pi}{r^2}\left(\int_{\xi}^r\rho s^2ds\right)^2dr,
        \end{aligned}
    \end{equation}
    which gives (\ref{GId}).

    What's more,
    \begin{equation}\label{BoundaryBdd}
        \begin{aligned}
            &4\pi\int_0^T\int_{\xi}^{a(t)}\pvcons(\partial_r u+\frac{2u}{r})^2r^2drdt\\
            =&4\pi\int_0^T\int_{\xi}^{a(t)}\pvcons(r^2(\partial_ru)^2+2u^2)drdt+8\pi\int_0^T\pvcons a(t)|a'(t)|^2dt\le E_0,
        \end{aligned}
    \end{equation}
    which gives $\int_0^Ta(t)|a'(t)|^2dt\le C$.
\end{proof}

\begin{rmk}
When written in 3D Cartesian coordinates, the energy reads:
\begin{equation}
    E(\rho(t),\u(t))=\int_{B(0;a(t))\backslash B(0;\xi)}\frac{1}{2}\rho|\boldsymbol{u}|^2+\frac{1}{\gamma-1}\rho^{\gamma}\dbx-\int_{\mathbb{R}^3}\frac{1}{8\pi}|\nabla\varPhi|^2\dbx,
\end{equation}
    and the energy identity is
        \begin{equation}
             E(\rho(t),\u(t))+\int_0^t\int_{B(0;a(s))\backslash B(0;\xi)}(\eta+\frac{4}{3}\varepsilon)|div\u|^2\dbx ds=E(\rho_0,\u_0).
        \end{equation}
When written in Lagrangian coordinates, the energy becomes
\begin{equation}
    E(\rho(\tau),u(\tau))=\int_0^M\frac{1}{2}u^2+\frac{1}{\gamma-1}\rho^{\gamma-1}-\frac{x}{r}dx,
\end{equation}
and the energy identity is
\begin{equation}\label{BasicEnergyEstLagrangian}
\begin{aligned}
    E(\rho(\tau),u(\tau))
    +\int_0^{\tau}\int_0^M(\eta+\frac{4}{3}\varepsilon)\frac{1}{\rho}(4\pi\rho r^2\partial_xu+\frac{2u}{r})^2dxds=E(\rho_0,u_0).
\end{aligned}
\end{equation}
\end{rmk}
\subsubsection{Further estimates in Eulerian coordinates}
\label{Estimates in Eulerian coordinates}
In this subsection, we establish several further estimates in Eulerian coordinates, which are necessary for the precompactness of the approximate solutions near the origin.

In the following lemma, we prove a higher space-time integrability for $\rho$.
\begin{lem}\label{rhoIntegrability}
    Let $T>0$, $\gamma\in(\frac{6}{5},\frac{4}{3}]$ and $(\rho,u,a)$ be any strong solution to the problem (\ref{apprE1})-(\ref{apprE4}) with initial data $(\rho_0,u_0)$ satisfying $\rho_0\in L^1([\xi,a_0);r^2dr)\cap L^{\gamma}([\xi,a_0);r^2dr)$, $\sqrt{\rho_0}u_0\in L^2([\xi,a_0);r^2dr)$ and (\ref{cond1}). Then there exists a positive constant $C=C(E_0,M,T)$ depending on $E_0,\ M$ and $T$, but independent of $\xi$ such that
    \begin{equation}
        \int_0^T\int_{\xi}^{a(t)}\rho^{2\gamma}r^8drdt\le C(E_0,M,T).
    \end{equation}
\end{lem}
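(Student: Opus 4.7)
My plan is to use a test-function argument tailored to the boundary conditions. Define
$$\Psi(r,t) := \int_\xi^r \rho^\gamma(s,t)\, s^8\, ds,$$
so that $\Psi(\xi,t)=0$ and $\partial_r\Psi = \rho^\gamma r^8$. Multiplying the momentum equation in (\ref{apprE1}) by $\Psi$ and integrating over $[\xi,a(t)]$, the pressure gradient term yields, via integration by parts,
$$\int_\xi^{a(t)} \partial_r\rho^\gamma\cdot\Psi\,dr \;=\; \bigl[\rho^\gamma\Psi\bigr]_\xi^{a(t)} - \int_\xi^{a(t)}\rho^{2\gamma} r^8\,dr \;=\; -\int_\xi^{a(t)}\rho^{2\gamma} r^8\,dr,$$
where the boundary terms vanish because $\Psi(\xi,t)=0$ and $\rho(a(t),t)=0$. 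This extracts the target quantity on the left; the remaining kinetic, gravitational, and viscous contributions on the right will be estimated using the basic energy identity (Lemma \ref{BasicEnergyEst}) and Corollary \ref{cor1}.

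For the viscous term, another integration by parts converts it into $\int_\xi^{a(t)}(\eta+\tfrac{4}{3}\varepsilon)\rho^\gamma r^8(\partial_r u+\tfrac{2u}{r})\,dr$, with boundary contributions vanishing by $\Psi(\xi,t)=0$ at the inner boundary and by $(\eta+\tfrac{4}{3}\varepsilon)(\partial_r u+\tfrac{2u}{r})\big|_{a(t)}=\rho^\gamma\big|_{a(t)}=0$ at the free boundary. Writing $r^8=r^6\cdot r^2$, Cauchy--Schwarz followed by Young's inequality dominates this by $\epsilon\int\rho^{2\gamma} r^8\,dr + C_\epsilon a(t)^6\int(\partial_r u+\tfrac{2u}{r})^2 r^2\,dr$. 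The first piece is absorbed into the left-hand side, while the second integrates to a constant in time since the dissipation in Lemma \ref{BasicEnergyEst} controls $\int_0^T\int(\partial_r u+\tfrac{2u}{r})^2 r^2\,dr\,dt$ and an a priori upper bound $\sup_{t\in[0,T]} a(t)\le C(a_0,T,E_0,M)$ can be extracted from $\int_0^T a\,|a'|^2\,dt\le E_0/(4\pi(\eta+\tfrac{4}{3}\varepsilon))$ (Lemma \ref{BasicEnergyEst}) together with $|a'(t)|=|u(a(t),t)|$.

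The gravitational contribution uses $\int_0^r\rho s^2\,ds\le M/(4\pi)$ combined with the pointwise estimate $|\Psi(r,t)|\le r^6\int_\xi^r\rho^\gamma s^2\,ds\le C r^6$, where the last bound follows from Corollary \ref{cor1} under Assumption \ref{cond11}. This gives a bound of order $C M^2 a(t)^2$ uniformly in $\xi$. The kinetic contribution $\int\rho(\partial_t u + u\partial_r u)\Psi\,dr$ is rewritten via the mass equation as $\partial_t(\rho u) + r^{-2}\partial_r(r^2\rho u^2)$, which after Reynolds transport (and using $u(\xi,t)=0$, $\rho(a(t),t)=0$ to kill boundary contributions) becomes $\frac{d}{dt}\int\rho u\,\Psi\,dr - \int\rho u\,D_t\Psi\,dr$ with $D_t = \partial_t + u\partial_r$. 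The time-boundary contribution $\bigl[\int\rho u\,\Psi\,dr\bigr]_0^T$ is bounded by Cauchy--Schwarz, using $\|\sqrt\rho\,u\,r\|_{L^2}^2\le 2E_0$ and the pointwise bound $|\Psi|/r\le C r^5$. For $\int_0^T\int\rho u\,D_t\Psi\,dr\,dt$, I differentiate $\Psi$ along the flow, substitute the mass equation for $\partial_t\rho$, integrate by parts in $s$ inside the nested integral, and apply Fubini to swap the order of integration; the resulting expression reduces to integrals of products of $\rho^\gamma$, $u$, and $\partial_r u$ that are controlled by the dissipation and the a priori $a(t)$ bound.

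The main obstacle is the kinetic term: the computation of $D_t\Psi$ produces nested integrals that must be estimated with constants independent of $\xi$. The crucial structural features are the choice $\Psi(\xi,t)=0$, which eliminates contributions from the artificial inner boundary, and the pointwise bound $|\Psi(r,t)|\le C r^6$ (obtained from the trivial estimate $s^6\le r^6$ in the integrand combined with Corollary \ref{cor1}); together they neutralize the potential singularities coming from the $1/r^2$ factor in the gravitational term and from the velocity behavior near $r=\xi$, yielding a closed inequality $Y\le C(E_0,M,T) + \epsilon Y + C\sup_t a(t)^3 Y^{1/2}$ for $Y := \int_0^T\int \rho^{2\gamma}r^8\,dr\,dt$, from which the desired bound follows.
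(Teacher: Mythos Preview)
Your approach is correct in spirit and reaches the same conclusion, but it takes a genuinely different route from the paper. The paper follows the Lions-type ``effective viscous flux'' scheme adapted from \cite{JZ}: multiply the momentum equation by $y^3$, integrate from $r$ to $a(t)$, then multiply the resulting identity by $\rho^\gamma$ to extract $\rho^{2\gamma}r^3$ pointwise, and finally integrate against $r^5\,dr\,dt$. The delicate step is that a dangerous $\rho^{\gamma+1}u^2 r^3$ term appears, and the paper eliminates it by rewriting $\rho^\gamma\partial_t(\int_r^{a(t)}\rho u\,y^3\,dy)$ via the continuity equation for $\rho^\gamma$, which produces an exact cancellation. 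Your approach is in some sense the Fubini-dual of this: instead of antidifferentiating the equation and then multiplying by $\rho^\gamma r^5$, you build the test function $\Psi=\int_\xi^r\rho^\gamma s^8\,ds$ directly and let one integration by parts produce $\int\rho^{2\gamma}r^8\,dr$. The analogous cancellation then occurs inside $D_t\Psi$, where the boundary contribution $-u\rho^\gamma r^8$ from integrating $u\partial_s\rho^\gamma$ by parts exactly kills $u\,\partial_r\Psi$. What your method buys is a slightly more transparent bookkeeping of boundary terms at $r=\xi$ and $r=a(t)$; what the paper's route buys is that the cancellation of the worst term is explicit before any time integration.

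One small correction: your claimed identity for the kinetic contribution is not exact. After rewriting $\rho(\partial_t u+u\partial_r u)=\partial_t(\rho u)+r^{-2}\partial_r(r^2\rho u^2)$ and integrating by parts, you obtain $\frac{d}{dt}\int\rho u\,\Psi\,dr-\int\rho u\,D_t\Psi\,dr+\int\frac{2}{r}\rho u^2\Psi\,dr$, i.e.\ there is a leftover $\int\frac{2}{r}\rho u^2\Psi\,dr$ that your Reynolds-transport sentence omits. This term is harmless since $|\Psi|\le Cr^6$ gives the bound $C a(t)^3\int\rho u^2 r^2\,dr\le C(E_0,M,T)$, so the gap is cosmetic rather than structural. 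Also note that both your argument and the paper's implicitly require $\sup_t a(t)\le C$, which you correctly derive from $\int_0^T a|a'|^2\,dt\le C$ (via $|a^{3/2}(t)-a^{3/2}(0)|\le \tfrac{3}{2}\sqrt{T}\,(\int_0^T a|a'|^2)^{1/2}$); the paper uses this bound tacitly in estimates like $\int\rho^\gamma r^5\,dr\le C$.
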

\begin{proof}
    We rewrite the momentum equation in (\ref{apprE1}):
    $$\partial_t(\rho u)+\frac{2}{r}\rho u^2+\partial_r(\rho u^2+\rho^{\gamma})=-\frac{4\pi\rho}{r^2}\int_0^r\rho(s,t)s^2ds+\partial_r\left(\frac{\frac{4}{3}\varepsilon+\eta}{r^2}\partial_r(r^2u)\right).$$
    Change the variable $r$ into $y$, multiply the resulting equation by $y^3$ and integrate it from $r$ to $a(t)$, we get
    \begin{equation}
    \begin{aligned}
        &\int_r^{a(t)}\partial_t(\rho u)y^3 dy+\int_r^{a(t)}2\rho u^2y^2 dy+\int_r^{a(t)}y^3\partial_y(\rho u^2+\rho^{\gamma}) dy\\=&-\int_r^{a(t)}4\pi\rho y\int_{\xi}^y\rho(s,t)s^2 ds dy+\int_r^{a(t)}y^3\partial_y\left(\frac{\frac{4}{3}\varepsilon+\eta}{y^2}\partial_y(y^2u)\right) dy.
    \end{aligned}
    \end{equation}
    We integrate the terms $\int_r^{a(t)}y^3\partial_y(\rho u^2+\rho^{\gamma}) dy$ and $\int_r^{a(t)}y^3\partial_y\left(\frac{\frac{4}{3}\varepsilon+\eta}{y^2}\partial_y(y^2u)\right) dy$ by parts and multiply the resulting equation by $\rho^{\gamma}$ to obtain
    \begin{equation}\label{rhoIntegrability:1}
    \begin{aligned}
        \rho^{2\gamma}r^3=&\left(\frac{4}{3}\varepsilon+\eta\right)\rho^{\gamma}\left(\partial_ru+\frac{2u}{r}\right)r^3-\rho^{\gamma+1}u^2r^3+\rho^{\gamma}\partial_t\int_r^{a(t)}\rho uy^3 dy\\&+\rho^{\gamma}\int_r^{a(t)}2\rho u^2y^2 dy-3\rho^{\gamma}\int_r^{a(t)}(\rho u^2+\rho^{\gamma})y^2 dy\\&+3\rho^{\gamma}\int_r^{a(t)}\left(\frac{4}{3}\varepsilon+\eta\right)\left(\partial_yu+\frac{2u}{y}\right)y^2 dy+\rho^{\gamma}\int_r^{a(t)}4\pi\rho y\int_{\xi}^r\rho(s,t)s^2dsdy.
    \end{aligned}
    \end{equation}
    Using the mass conservation $\partial_t\rho^{\gamma}=-\partial_r(\rho^{\gamma}u)-(\gamma-1)\rho^{\gamma}\partial_ru-\gamma\rho^{\gamma}\frac{2u}{r}=0$, we can rewrite the following term in (\ref{rhoIntegrability:1}) as
    \begin{equation}
    \begin{aligned}
        \rho^{\gamma}\partial_t\int_r^{a(t)}\rho uy^3 dy=&\partial_t\left(\rho^{\gamma}\int_r^{a(t)}\rho uy^3 dy\right)-\pt\rho^{\gamma}\int_r^{a(t)}\rho uy^3 dy\\
        =&\partial_t\left(\rho^{\gamma}\int_r^{a(t)}\rho uy^3 dy\right)+\partial_r\left(\rho^{\gamma}u\int_r^{a(t)}\rho uy^3 dy\right)+\rho^{\gamma+1}u^2r^3
        \\&+(\gamma-1)\rho^{\gamma}\partial_ru\int_r^{a(t)}\rho uy^3 dy+\gamma\rho^{\gamma}\frac{2u}{r}\int_r^{a(t)}\rho uy^3 dy,
    \end{aligned}
    \end{equation}
    then (\ref{rhoIntegrability:1}) reads as
    \begin{equation}\label{rho2gamma}
        \begin{aligned}
        \rho^{2\gamma}r^3=&\left(\frac{4}{3}\varepsilon+\eta\right)\rho^{\gamma}\left(\partial_ru+\frac{2u}{r}\right)r^3+\partial_t\left(\rho^{\gamma}\int_r^{a(t)}\rho uy^3 dy\right)+\partial_r\left(\rho^{\gamma}u\int_r^{a(t)}\rho uy^3 dy\right)
        \\&+(\gamma-1)\rho^{\gamma}\partial_ru\int_r^{a(t)}\rho uy^3 dy+\gamma\rho^{\gamma}\frac{2u}{r}\int_r^{a(t)}\rho uy^3 dy\\&+\rho^{\gamma}\int_r^{a(t)}2\rho u^2y^2 dy-3\rho^{\gamma}\int_r^{a(t)}(\rho u^2+\rho^{\gamma})y^2 dy+\rho^{\gamma}\int_r^{a(t)}4\pi\rho y\int_{\xi}^r\rho(s,t)s^2dsdy\\&+3\rho^{\gamma}\int_r^{a(t)}\left(\frac{4}{3}\varepsilon+\eta\right)\left(\partial_yu+\frac{2u}{y}\right)y^2 dy.
    \end{aligned}
    \end{equation}
Multiply (\ref{rho2gamma}) by $r^5$ and integrate it over $[\xi,a(t))\times[0,T]$, we have
\begin{equation}\label{rhoIntegrability:2}
    \int_0^T\int_{\xi}^{a(t)}\rho^{2\gamma}r^8drdt=\int_0^T\int_{\xi}^{a(t)}\{\text{r.h.s. of (\ref{rho2gamma})}\}r^5drdt=\sum_{i=1}^{9}I_i.
\end{equation}
We estimate each term on the right hand side of (\ref{rhoIntegrability:2}). The controls of $I_1,\ I_4\text{ and }I_5$ are similar and can be estimated together as
\begin{equation}\label{I1}
    \begin{aligned}
        &|I_1+I_4+I_5|\\
        =&\left|\int_0^T\int_{\xi}^{a(t)}\left(\frac{4}{3}\varepsilon+\eta\right)\rho^{\gamma}\left(\partial_ru+\frac{2u}{r}\right)r^8+\left((\gamma-1)\rho^{\gamma}\partial_ru+\gamma\rho^{\gamma}\cdot\frac{2u}{r}\right)r^5\left(\int_r^{a(t)}\rho uy^3dy\right)drdt\right|\\
        \le&2\delta\int_0^T\int_{\xi}^{a(t)}\rho^{2\gamma}r^8drdt+C\delta^{-1}\int_0^T\int_{\xi}^{a(t)}\left(\partial_ru+\frac{2u}{r}\right)^2r^8drdt\\
        &+C\int_0^T\int_{\xi}^{a(t)}\rho^{\gamma}\left(\left|\partial_ru\right|+\left|\frac{2u}{r}\right|\right)r^5drdt\cdot\frac{1}{2}\sup_{0\le t\le T}\int_{\xi}^{a(t)}(\rho+\rho u^2)y^3 dy\\
        \le&2\delta\int_0^T\int_{\xi}^{a(t)}\rho^{2\gamma}r^8drdt+\delta^{-1}C.
    \end{aligned}
\end{equation}
For $I_3$, we integrate by parts with respect to $r$ and estimate similarly as in (\ref{I1}):
\begin{equation}
    \begin{aligned}\label{I3}
        |I_3|=&\left|\int_0^T\int_{\xi}^{a(t)}\partial_r\left(\rho^{\gamma}u\int_r^{a(t)}\rho uy^3 dy\right)r^5drdt\right|\\
        \le&\left|\int_0^T\int_{\xi}^{a(t)}\rho^{\gamma}u\left(\int_r^{a(t)}\rho uy^3 dy\right)5r^4 drdt\right|\\
        \le&\delta\int_0^T\int_{\xi}^{a(t)}\rho^{2\gamma}r^8drdt+\delta^{-1}C\int_0^T\int_{\xi}^{a(t)}u^2\left(\int_{\xi}^{a(t)}(\rho+\rho u^2)y^3 dy\right)drdt\\
        \le&\delta\int_0^T\int_{\xi}^{a(t)}\rho^{2\gamma}r^8drdt+\delta^{-1}C.
    \end{aligned}
\end{equation}
For $I_6,\ I_7\text{ and }I_8$, 
\begin{equation}\label{I67}
    \begin{aligned}
        &|I_6+I_7+I_8|\\
        =&\left|\int_0^T\int_{\xi}^{a(t)}\rho^{\gamma}r^5\left(\int_r^{a(t)}\left[2\rho u^2y^2-3(\rho u^2+\rho^{\gamma})y^2+\int_r^{a(t)}4\pi\rho y\int_{\xi}^r\rho(s,t)s^2ds\right] dy\right)drdt\right|\\
        \le& C\int_0^T\int_{\xi}^{a(t)}\rho^{\gamma}y^5drdt\le C,
    \end{aligned}
\end{equation}
where we use the basic energy estimate to control the integral in the interior layer. For $I_9$, we apply H\"older inequality to the interior layer to get
\begin{equation}\label{I9}
    \begin{aligned}
        |I_9|=&\left|\int_0^T\int_{\xi}^{a(t)}\rho^{\gamma}r^5\left(\int_r^{a(t)}\left(\frac{4}{3}\varepsilon+\eta\right)\left(\partial_yu+\frac{2u}{y}\right)3y^2 dy\right)drdt\right|\\
        \le&C\sup_{0\le t\le T}\int_{\xi}^{a(t)}\rho^{\gamma}r^5dr\cdot\left(\int_0^T\int_{\xi}^{a(t)}\left(\partial_yu+\frac{2u}{r}\right)^2(3y^2)^2dydt\right)^{\frac{1}{2}}\le C.
    \end{aligned}
\end{equation}
While for $I_2$, 
\begin{equation}\label{I2}
    \begin{aligned}
        |I_2|=&\left|\int_0^T\int_{\xi}^{a(t)}\partial_t\left(\rho^{\gamma}\int_r^{a(t)}\rho uy^3 dy\right)r^5drdt\right|\\    
        \le&C\sup_{0\le t\le T}\left(\int_{\xi}^{a(t)}\rho^{\gamma}r^5dr\cdot\int_{\xi}^{a(t)}(\rho+\rho u^2)y^3dy\right)\le C.
    \end{aligned}
\end{equation}

Choose $\delta=\frac{1}{6}$, combining (\ref{I1})-(\ref{I2}), we get the result.
\end{proof}
The following lemma gives a lower bound of the particle path $r(x,\tau)$ for each fixed $x\in(0,M]$ and a lower bound of the difference of two paths.
\begin{lem}\label{r_bound}Let $T>0$, $\gamma\in(\frac{6}{5},\frac{4}{3}]$ and $(\rho,u,a)$ be any strong solution to the problem (\ref{apprE1})-(\ref{apprE4}) with initial data $(\rho_0,u_0)$ satisfying $\rho_0\in L^1([\xi,a_0);r^2dr)\cap L^{\gamma}([\xi,a_0);r^2dr)$, $\sqrt{\rho_0}u_0\in L^2([\xi,a_0);r^2dr)$ and (\ref{cond1}). Then
\begin{equation}
    C_{\gamma}E_0^{-\frac{1}{3(\gamma-1)}}x^{\frac{\gamma}{3(\gamma-1)}}\le r(x,\tau)\le a(\tau),\ (x,\tau)\in(0,M]\times[0,T],
\end{equation}
\begin{equation}
    C_{\gamma}E_0^{-\frac{1}{\gamma-1}}(x_2-x_1)^{\frac{\gamma}{\gamma-1}}\le r^3(x_2,\tau)-r^3(x_1,\tau),\ 0< x_1<x_2\le M,\ \tau\in[0,T],
\end{equation}
where $C_{\gamma}>0$ only depends on $\gamma$.
\end{lem}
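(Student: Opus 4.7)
The assertion is a purely ``geometric--thermodynamic'' statement comparing the Lagrangian label $x$ with the Eulerian radius $r$, and the only analytic input is a uniform bound on $\int_\xi^{a(t)}\rho^{\gamma}r^2\,dr$. I would therefore proceed in three short steps.

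First, I recall the definition of the Lagrangian coordinate,
\[
x(r,\tau)=4\pi\int_\xi^{r(x,\tau)}\rho(s,\tau)\,s^{2}\,ds.
\]
The upper bound $r(x,\tau)\le a(\tau)$ is immediate since $r(\cdot,\tau)$ is monotone increasing in $x$ and attains $a(\tau)$ at $x=M$. For the lower bound I factor
\[
\rho\,s^{2}=(\rho^{\gamma}s^{2})^{1/\gamma}\cdot s^{2(\gamma-1)/\gamma}
\]
and apply H\"older's inequality with conjugate exponents $\gamma$ and $\gamma/(\gamma-1)$; since the second exponent turns the power $2(\gamma-1)/\gamma$ back into $s^{2}$, I get
\[
x\le 4\pi\left(\int_\xi^{r}\rho^{\gamma}s^{2}\,ds\right)^{1/\gamma}\!\left(\int_\xi^{r}s^{2}\,ds\right)^{(\gamma-1)/\gamma}\le C_\gamma\left(\int_\xi^{r}\rho^{\gamma}s^{2}\,ds\right)^{1/\gamma}r^{3(\gamma-1)/\gamma}.
\]

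Second, I bound $\int_\xi^{a(t)}\rho^{\gamma}s^{2}\,ds\le CE_0$ uniformly in $\xi$. For $\gamma\in(6/5,4/3)$, the decomposition of $E$ in \eqref{eeenergy} together with Corollary \ref{cor1} (which under Assumption \ref{cond11} yields $Q(\rho(t))>0$) gives
\[
\frac{4-3\gamma}{\gamma-1}\int_\xi^{a(t)}4\pi\rho^{\gamma}r^{2}\,dr\le E(\rho(t),u(t))\le E_0,
\]
where the energy monotonicity comes from Lemma \ref{BasicEnergyEst}. For $\gamma=4/3$ the first inequality of Corollary \ref{cor1} absorbs the negative gravitational contribution into the internal energy and again delivers $\int\rho^{4/3}r^{2}\,dr\le CE_0$. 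Inserting this uniform bound into the H\"older estimate and inverting $x\le C E_0^{1/\gamma}r^{3(\gamma-1)/\gamma}$ in $r$ yields the first claim with the explicit exponents $\gamma/(3(\gamma-1))$ and $-1/(3(\gamma-1))$.

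Third, the second assertion follows by rerunning the very same computation on the shell $[r_1,r_2]$ rather than $[\xi,r]$: since $x_2-x_1=4\pi\int_{r_1}^{r_2}\rho\,s^{2}\,ds$, H\"older combined with $\int_{r_1}^{r_2}s^{2}\,ds=(r_2^{3}-r_1^{3})/3$ gives
\[
x_2-x_1\le C_\gamma\, E_0^{1/\gamma}\left(r_2^{3}-r_1^{3}\right)^{(\gamma-1)/\gamma},
\]
and rearrangement produces the stated lower bound on $r_2^{3}-r_1^{3}$. There is no serious obstacle: the entire proof is a one-line H\"older estimate, and the only point requiring care is that the control of $\int\rho^{\gamma}r^{2}\,dr$ be independent of the cutoff $\xi$, which is exactly what the basic energy estimate combined with the variational bound of Section \ref{Variational methods} provides.
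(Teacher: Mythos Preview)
Your proof is correct and follows exactly the same approach as the paper: apply H\"older with exponents $\gamma$ and $\gamma/(\gamma-1)$ to $x=4\pi\int_\xi^r\rho s^2\,ds$ (respectively to $x_2-x_1=4\pi\int_{r_1}^{r_2}\rho s^2\,ds$), and use the uniform bound $\int_\xi^{a(t)}\rho^\gamma r^2\,dr\le CE_0$ from the basic energy estimate and Corollary~\ref{cor1}. If anything, your write-up is more explicit than the paper's in justifying the $E_0$ bound on the internal energy in the two regimes $\gamma\in(6/5,4/3)$ and $\gamma=4/3$.
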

\begin{proof}Notice that
    \begin{equation}
        \begin{aligned}
            x(r,t)=&4\pi\int_{\xi}^r\rho(s,t)s^2ds\\
            \le&\left(4\pi\int_{\xi}^{r(x,\tau)}\rho^{\gamma}y^2dy\right)^{\frac{1}{\gamma}}\left(4\pi\int_{\xi}^{r(x,\tau)}y^2dy\right)^{\frac{\gamma-1}{\gamma}}\le CE_0^{\frac{1}{\gamma}}r^{\frac{3(\gamma-1)}{\gamma}},
        \end{aligned}
    \end{equation}
    \begin{equation}
        \begin{aligned}
             x_2(r,t)-x_1(r,t)=&4\pi\int_{r(x_1,\tau)}^{r(x_2,\tau)}\rho(s,t)s^2ds\\
            \le&\left(4\pi\int_{r(x_1,\tau)}^{r(x_2,\tau)}\rho^{\gamma}y^2dy\right)^{\frac{1}{\gamma}}\left(4\pi\int_{r(x_1,\tau)}^{r(x_2,\tau)}y^2dy\right)^{\frac{\gamma-1}{\gamma}}\\
            \le& CE_0^{\frac{1}{\gamma}}(r^3(x_2,\tau)-r^3(x_1,\tau))^{\frac{\gamma-1}{\gamma}},
        \end{aligned}
    \end{equation}
    the results just follow. 
\end{proof}
The following lemma provides a pointwise upper bound and lower bound for the density along the particle path.
\begin{lem}\label{rhobound}
    Let $T>0$, $\gamma\in(\frac{6}{5},\frac{4}{3}]$ and $(\rho,u,a)$ be any strong solution to the problem (\ref{apprE1})-(\ref{apprE4}) with initial data satisfying (\ref{cond1}). There exist constants $C=C(x,E_0,M,T)>0$ and $\tilde{C}=\tilde{C}(x,E_0,M,T)>0$ such that
    \begin{equation}
        \rho_0(r(0))\cdot\exp\left(\frac{\tilde{C}}{\frac{4}{3}\varepsilon+\eta}\right)\le\rho(r(t),t)\le\rho_0(r(0))\cdot\exp\left(\frac{C}{\frac{4}{3}\varepsilon+\eta}\right),\ any\ t\in[0,T],
    \end{equation}
    where $r(t)$ is particle path defined as
    \begin{equation}
        \begin{cases}
            \frac{d}{dt}r(t)=u(r(t),t),\\
            r(0)=r,\ r\in(\xi,a_0].
        \end{cases}
    \end{equation}
\end{lem}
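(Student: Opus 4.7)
The plan is to work in Lagrangian coordinates and exploit the effective viscous flux. Writing $\kappa:=\eta+\frac{4}{3}\varepsilon$ for brevity, define
\[
F := \kappa\rho\cdot 4\pi\partial_x(r^2u) - \rho^{\gamma}.
\]
The boundary condition $\sigma^{\xi}(M,\tau)=0$ in (\ref{apprL3}) is exactly $F(M,\tau)=0$. The continuity equation in (\ref{apprL1}) rewrites as $4\pi\rho\,\partial_x(r^2u) = -\partial_\tau\log\rho$, so
\[
F = -\kappa\,\partial_\tau\log\rho - \rho^{\gamma}.
\]
Substituting $\kappa\rho\cdot 4\pi\partial_x(r^2u) = F+\rho^{\gamma}$ into the momentum equation in (\ref{apprL1}) collapses its right-hand side to $4\pi r^2\partial_x F + 4\pi r^2\partial_x\rho^{\gamma}$, leaving the clean identity
\[
\partial_x F \;=\; \frac{\partial_\tau u}{4\pi r^2} + \frac{x}{4\pi r^4}.
\]
Integrating from $x$ to $M$ and using $F(M,\tau)=0$ gives an explicit representation of $F(x,\tau)$ as an integral over $[x,M]$.

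Integrating the relation $-\kappa\partial_\tau\log\rho = F+\rho^\gamma$ in $\tau$ along the Lagrangian label $x$ then yields the key identity
\[
\kappa\log\frac{\rho(x,\tau)}{\rho_0(x)} \;=\; -\int_0^\tau F(x,s)\,ds \;-\; \int_0^\tau\rho^{\gamma}(x,s)\,ds.
\]
For the first term I would swap the order of integration in $s$ and $y$ and integrate by parts in $s$ on the $\partial_s u$ piece, using $\partial_s r(y,s)=u(y,s)$ in Lagrangian coordinates (so that $\partial_s(1/r^2)=-2u/r^3$). This rewrites $-\int_0^\tau F\,ds$ as the sum of two time-boundary terms $\int_x^M u(y,\tau)/(4\pi r^2(y,\tau))\,dy$ and $-\int_x^M u_0(y)/(4\pi r_0^2(y))\,dy$, together with the two nonnegative space-time integrals of $u^2/(2\pi r^3)$ and $y/(4\pi r^4)$. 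Each of these is now bounded via two ingredients already in hand: Lemma \ref{BasicEnergyEst} combined with Corollary \ref{cor1} (where Assumption \ref{cond11} is used to absorb the negative gravitational energy) gives $\int_0^M u^2(y,s)\,dy\le CE_0$ uniformly in $s\in[0,T]$, and Lemma \ref{r_bound} supplies the $\xi$-independent lower bound $r(y,s)\ge C_\gamma E_0^{-1/(3(\gamma-1))}\,x^{\gamma/(3(\gamma-1))}=: r_{\min}(x)$ for all $y\in[x,M]$ and $s\in[0,T]$. Cauchy--Schwarz on the boundary-in-time terms and crude pointwise bounds using $r\ge r_{\min}(x)$ on the $u^2/r^3$ and $y/r^4$ terms then yield $\bigl|\int_0^\tau F(x,s)\,ds\bigr|\le C(x,E_0,M,T)$. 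Since $\rho^\gamma\ge 0$, the upper bound $\rho(x,\tau)\le\rho_0(x)\exp(C/\kappa)$ follows directly; for the matching lower bound one feeds this upper bound back into $\int_0^\tau\rho^\gamma\,ds$ and combines with the two-sided control on $\int F\,ds$ to conclude $\rho(x,\tau)\ge\rho_0(x)\exp(-\tilde C/\kappa)$.

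The main technical obstacle, and the source of the $x$-dependence of the constants, is the $1/r^4$ singularity in the representation of $F$: it is only the quantitative, $\xi$-independent lower bound $r\ge r_{\min}(x)$ from Lemma \ref{r_bound} that makes the integrals on the right-hand side finite. This is exactly the reason the estimate cannot be extended uniformly down to $x=0$, and it foreshadows why the later compactness argument must split the spatial domain into a region near the origin (handled by higher space-time integrability) and a region near the free boundary (handled by Arzel\`a--Ascoli).
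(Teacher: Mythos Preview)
Your argument is correct and is essentially the paper's proof recast in Lagrangian coordinates. The paper works in Eulerian coordinates along particle paths with the quantity $\int_{a(t)}^{r}\rho u\,dy+\kappa\log\rho$ and computes its material derivative; your effective-viscous-flux identity $\partial_xF=\frac{\partial_\tau u}{4\pi r^2}+\frac{x}{4\pi r^4}$ followed by time integration and integration by parts in $\tau$ produces term-by-term the same expressions (e.g.\ the paper's $\int_r^{a(t)}\frac{2}{y}\rho u^2\,dy$ and $\int_r^{a(t)}\frac{4\pi\rho}{y^2}\int_\xi^y\rho s^2\,ds\,dy$ are precisely your $\int_x^M\frac{u^2}{2\pi r^3}\,dy$ and $\int_x^M\frac{y}{4\pi r^4}\,dy$ after the change of variables $dx=4\pi\rho r^2\,dr$), and both proofs close via Lemma~\ref{r_bound} and the basic energy estimate in the same way.
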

\begin{proof}
    The mass equation in (\ref{apprE1}) gives
    \begin{equation}
        \partial_t\rho+u\partial_r\rho+\rho(\partial_ru+\frac{2u}{r})=0,
    \end{equation}
    which implies
    \begin{equation}\label{logrho}
        \partial_t\log\rho+u\partial_r\log\rho+(\partial_ru+\frac{2u}{r})=0.
    \end{equation}
    
    On the other hand,
    \begin{equation}
        \begin{aligned}
            (\partial_t+u\partial_r)\int_{a(t)}^r\rho udy=&-\rho u^2(a(t),t)+\int_{a(t)}^r\partial_t(\rho u)dy+\rho u^2(r,t)\\
            =&-\rho u^2(a(t),t)+\int_{a(t)}^r-\frac{2}{y}\rho u^2dy-\int_{a(t)}^r\partial_y(\rho u^2+\rho^{\gamma})dy\\&-\int_{a(t)}^r\frac{4\pi\rho}{y^2}\int_{\xi}^y\rho(s,t)s^2dsdy+\int_{a(t)}^r\left(\frac{4}{3}\varepsilon+\eta\right)\partial_y(\partial_yu+\frac{2u}{y})dy+\rho u^2(r,t)\\
            =&\int_{a(t)}^r-\frac{2}{y}\rho u^2dy-\left(\rho^{\gamma}-\left(\frac{4}{3}\varepsilon+\eta\right)(\partial_yu+\frac{2u}{y})\right)\big|_{a(t)}^r-\int_{a(t)}^r\frac{4\pi\rho}{y^2}\int_{\xi}^y\rho(s,t)s^2dsdy,
        \end{aligned}
    \end{equation}
    combining with (\ref{logrho}), we get
    \begin{equation}
        \begin{aligned}
            &(\partial_t+u\partial_r)\left(\int_{a(t)}^r\rho udy+\left(\frac{4}{3}\varepsilon+\eta\right)\log\rho\right)\\
            =&\int_r^{a(t)}\frac{2}{y}\rho u^2dy-\rho^{\gamma}+\rho^{\gamma}(a(t),t)-\left(\frac{4}{3}\varepsilon+\eta\right)\left(\partial_ru+\frac{2u}{r}\right)(a(t),t)\\&+\int_r^{a(t)}\frac{4\pi\rho}{y^2}\int_{\xi}^{y}\rho(s,t)s^2dsdy\\
            =&\int_r^{a(t)}\frac{2}{y}\rho u^2dy-\rho^{\gamma}+\int_r^{a(t)}\frac{4\pi\rho}{y^2}\int_{\xi}^{y}\rho(s,t)s^2dsdy,
        \end{aligned}
    \end{equation}
    which yields
    \begin{equation}
        \begin{aligned}
            &\frac{d}{dt}\left(\int_{a(t)}^r\rho udy+\left(\frac{4}{3}\varepsilon+\eta\right)\log\rho\right)+\rho^{\gamma}\\
            =&\int_r^{a(t)}\frac{2}{y}\rho u^2dy+\int_r^{a(t)}\frac{4\pi\rho}{y^2}\int_{\xi}^{y}\rho(s,t)s^2dsdy.
        \end{aligned}
    \end{equation}
    Integrate the above equation from 0 to $t$,
    \begin{equation}
        \begin{aligned}
            &\left(\frac{4}{3}\varepsilon+\eta\right)\log\frac{\rho(r(t),t)}{\rho(r(0))}+\int_0^t\rho^{\gamma}(r(s),s)ds\\
            =&\int_0^t\int_{r(s)}^{a(s)}\frac{2}{y}\rho u^2dyds+\int_0^t\int_{r(\tau)}^{a(\tau)}\frac{4\pi\rho}{y^2}\int_{\xi}^y\rho(s,\tau)s^2dsdyd\tau\\&+\int_{r(t)}^{a(t)}\rho udy-\int_{r_0}^{a_0}\rho udy.
        \end{aligned}
    \end{equation}
    Making use of Lemma \ref{r_bound}, we have the following controls
    \begin{equation}
        \int_{r(t)}^{a(t)}\rho udy\le\frac{1}{r^2(x,t)}\left(\int_{\xi}^{a(t)}\rho u^2y^2dy\right)^{\frac{1}{2}}\left(\int_{\xi}^{a(t)}\rho y^2dy\right)^{\frac{1}{2}}\le CE_0^{\frac{2}{3(\gamma-1)}+\frac{1}{2}}M^{\frac{1}{2}}x^{-\frac{2\gamma}{3(\gamma-1)}},
    \end{equation}
    \begin{equation}
    \begin{aligned}
        \int_0^t\int_{r(s)}^{a(t)}\frac{2}{y}\rho u^2dyds\le&\int_0^t\frac{1}{r^3(x,s)}\int_{r(s)}^{a(t)}\rho u^2y^2dyds\\
        \le&4E_0\int_0^T\frac{1}{r^3(x,s)}ds\le C_{\gamma}TE_0^{1+\frac{1}{\gamma-1}}x^{-\frac{\gamma}{\gamma-1}},
    \end{aligned}
    \end{equation}
    \begin{equation}
        \begin{aligned}
            &\int_0^t\int_{r(\tau)}^{a(\tau)}\frac{4\pi\rho}{y^2}\int_{\xi}^y\rho(s,\tau)s^2dsdyd\tau\\
            \le&\int_0^t\frac{1}{r^3(x,\tau)}\int_{r(\tau)}^{a(\tau)}4\pi\rho y\int_{\xi}^y\rho(s,\tau)s^2dsdyd\tau\le C_{\gamma}TE_0^{1+\frac{1}{\gamma-1}}x^{-\frac{\gamma}{\gamma-1}},
        \end{aligned}
    \end{equation}
    which yield
    \begin{equation}
       \begin{aligned}
       \left(\frac{4}{3}\varepsilon+\eta\right)\log\frac{\rho(r(t),t)}{\rho(r(0))}\le C_1(x,E_0,M,T),
       \end{aligned}
    \end{equation}
    so
    \begin{equation}
        \rho(r(t),t)\le\rho(r(0))\exp\left(\frac{C(x,E_0,M,T)}{\frac{4}{3}\varepsilon+\eta}\right).
    \end{equation}
on the other hand,
\begin{equation}
    \begin{aligned}
        &\left(\frac{4}{3}\varepsilon+\eta\right)\log\frac{\rho(r(t),t)}{\rho(r(0))}\ge-C_1(x,E_0,M,T)-\int_0^t\rho^{\gamma}(r(s),s)ds\\
        \ge&-C_1(x,E_0,M,T)-T\exp\left(\frac{\gamma C(x,E_0,M,T)}{\frac{4}{3}\varepsilon+\eta}\right)\|\rho_0\|_{L^{\infty}}^{\gamma}=:-\tilde{C}(x,E_0,M,T),
    \end{aligned}
\end{equation}
which gives
\begin{equation}
    \rho(r(t),t)\ge\rho(r(0))\exp\left(\frac{\tilde{C}(x,E_0,M,T)}{\frac{4}{3}\varepsilon+\eta}\right).
\end{equation}
\end{proof}
\subsubsection{Further estimates in Lagrangian coordinates}
\label{Estimates in Lagrangian coordinates}
In this subsection, we prove several further weighted $L^2$ estimates for temporal and spatial derivatives of $\rho$ and $u$ away from the origin in Lagrangian coordinates, which are crucial to the convergence of the free boundary.

We introduce the following cutoff functions. Let $0<x_0<x_1<x_2<M$. Let $\phi,\ \zeta\in C^{\infty}[0,M]$ such that
\begin{equation*}
0\le\phi(x)\le1,\ \phi(x)=1\text{ for }x\in[x_1,M],\ \phi(x)=0\text{ for }x\in[0,x_0+\frac{x_1-x_0}{2}],
\end{equation*}
\begin{equation*}
    0\le\phi'(x)\le\frac{C}{x_1-x_0},
\end{equation*}
\begin{equation*}
0\le\zeta(x)\le1,\ \zeta(x)=1\text{ for }x\in[x_2,M],\ \zeta(x)=0\text{ for }x\in[0,x_1+\frac{x_2-x_1}{2}],
\end{equation*}
\begin{equation*}
0\le\zeta'(x)\le\frac{C}{x_2-x_1}.
\end{equation*}
Let $\varphi=\phi^2,\ \psi=\zeta^2$. Then
\begin{equation}
   |\varphi'|=2\phi|\phi'|\le C\sqphi,\ 
   |\psi'|=2\zeta|\zeta'|\le C\sqrt{\psi}.
\end{equation}

We recall the effective viscous flux
\begin{equation}\label{EffVisFlux}
    \sigma=\rho^{\gamma}-\pvcons(4\pi\rho r^2\pxu+\frac{2u}{r}),
\end{equation}
which is an important quantity in the estimates of derivatives of $\rho$ and $u$.

The space-time estimates proved in the following auxiliary lemma will be used frequently in later proofs.
\begin{lem}\label{HigherEnergyEst1}
    Let $T>0$, $\gamma\in(\frac{6}{5},\frac{4}{3}]$ and $(\rho,u,a)$ be any strong solution to the problem (\ref{apprE1})-(\ref{apprE4}) with initial data $(\rho_0,u_0)$ satisfying $\rho_0\in L^1([\xi,a_0);r^2dr)\cap L^{\gamma}([\xi,a_0);r^2dr)$, $\sqrt{\rho_0}u_0\in L^2([\xi,a_0);r^2dr)$ and (\ref{cond1}). Assume further that $\rho_0(x)\in \Linf([x_0,M])$. Then the following estimates hold
    \begin{equation}\label{boundRho} 
        \int_0^T\int_0^M\rho^{-3}(\partial_{\tau}\rho)^2dxd\tau\le C, 
    \end{equation}
    \begin{equation}\label{boundSigma}
            \inttxm\frac{\sigma^2}{\rho} dxd\tau\le C,
    \end{equation}
    \begin{equation}\label{boundU} \int_0^T\|u\|_{L^{\infty}([x_0,M])}^2dt\le C
    \end{equation}
\end{lem}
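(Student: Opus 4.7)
The plan is to obtain all three estimates from the basic energy identity (\ref{BasicEnergyEstLagrangian}) combined with the mass equation $\ptau\rho+4\pi\rho^2\px(r^2u)=0$ and the algebraic structure of the effective viscous flux (\ref{EffVisFlux}).

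First I would prove (\ref{boundRho}) by a pure algebraic manipulation. The change of variables gives $\px r=\frac{1}{4\pi\rho r^2}$, whence the identity $4\pi\rho\,\px(r^2u)=4\pi\rho r^2\px u+\frac{2u}{r}$. Combining with the mass equation $\ptau\rho=-4\pi\rho^2\px(r^2u)$ yields the pointwise relation
\[\rho^{-3}(\ptau\rho)^{2}=\frac{1}{\rho}\Bigl(4\pi\rho r^{2}\px u+\frac{2u}{r}\Bigr)^{2}.\]
Integrating over $[0,M]\times[0,T]$ and invoking (\ref{BasicEnergyEstLagrangian}), together with $\pvcons>0$, delivers (\ref{boundRho}) immediately.

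Next, for (\ref{boundSigma}), the same identity recasts the effective viscous flux as $\sigma=\rho^{\gamma}+\pvcons\,\ptau\rho/\rho$. Squaring and dividing by $\rho$ gives $\sigma^{2}/\rho\le 2\rho^{2\gamma-1}+2\pvcons^{2}\rho^{-3}(\ptau\rho)^{2}$. The second term integrates to a bounded quantity by the already-established (\ref{boundRho}). For the first, Lemma \ref{rhobound} together with the hypothesis $\rho_0\in\LinfxM$ yields a uniform upper bound $\rho(x,\tau)\le C(x_0,E_0,M,T)$ for $(x,\tau)\in[x_0,M]\times[0,T]$, which trivially controls $\inttxm\rho^{2\gamma-1}\,dxd\tau$.

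For (\ref{boundU}), I would invert the definition of $\sigma$ to obtain $\px(r^{2}u)=(4\pi\pvcons)^{-1}(\rho^{\gamma-1}-\sigma/\rho)$ and integrate from $x=0$, where the boundary condition (\ref{apprL3}) gives $r^{2}u|_{x=0}=\xi^{2}\cdot 0=0$. This yields
\[u(x,\tau)=\frac{1}{4\pi\,r^{2}(x,\tau)\,\pvcons}\int_{0}^{x}\Bigl(\rho^{\gamma-1}-\sigma/\rho\Bigr)dy.\]
For $x\in[x_0,M]$, Lemma \ref{r_bound} provides $r(x,\tau)\ge C(x_0,E_0)>0$. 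The change of variables $dx=4\pi\rho r^{2}dr$ bounds $\int_{0}^{M}\rho^{\gamma-1}dy$ by $CE_0$ via the internal energy. The remaining term is handled by decomposing $\sigma/\rho=\rho^{\gamma-1}+\pvcons\,\ptau\rho/\rho^{2}$ and applying Cauchy--Schwarz,
\[\int_{0}^{M}\frac{|\ptau\rho|}{\rho^{2}}\,dy\le\Bigl(\int_{0}^{M}\frac{(\ptau\rho)^{2}}{\rho^{3}}\,dy\Bigr)^{1/2}\Bigl(\int_{0}^{M}\frac{1}{\rho}\,dy\Bigr)^{1/2}.\]
The last factor equals $\frac{4\pi}{3}(a^{3}(\tau)-\xi^{3})$; from (\ref{BoundaryBdd}) one has $\int_{0}^{T}a\,|a'|^{2}\,dt\le C$, which amounts to an $H^{1}([0,T])$ bound on $a^{3/2}$ and hence a uniform pointwise bound on $a(\tau)$. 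Squaring in $x$, taking supremum over $x\in[x_0,M]$, and integrating in $\tau$ using (\ref{boundRho}) closes the estimate.

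The main obstacle will be the very last step: ensuring that all bounds are uniform in the approximation parameter $\xi$. Reinterpreting the boundary contribution in (\ref{BoundaryBdd}) as an $H^{1}([0,T])$ bound on $a^{3/2}$ and thereby getting a $\xi$-independent pointwise control of the free boundary is the delicate point that ties (\ref{boundU}) to the basic energy, while (\ref{boundRho})--(\ref{boundSigma}) are essentially algebraic consequences of the mass equation.
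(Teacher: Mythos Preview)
Your proofs of (\ref{boundRho}) and (\ref{boundSigma}) are essentially identical to the paper's: both rely on the identity $\rho^{-3}(\ptau\rho)^2=\rho^{-1}(4\pi\rho r^2\px u+2u/r)^2$, which is precisely the dissipation integrand in (\ref{BasicEnergyEstLagrangian}), and then bound $\sigma^2/\rho$ by $\rho^{2\gamma-1}$ plus that same quantity, using Lemma \ref{rhobound} on $[x_0,M]$ to control the $\rho^{2\gamma-1}$ term. The paper simply expands $\sigma$ directly rather than routing through $\ptau\rho$, but the content is the same.

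For (\ref{boundU}) your route is genuinely different and still correct. The paper applies the Sobolev embedding $W^{1,1}([x_0,M])\hookrightarrow L^\infty([x_0,M])$ and splits $|\px u|\le(\rho r^4)^{1/2}|\px u|\cdot(\rho r^4)^{-1/2}$; the point is that
\[
\int_{x_0}^M\frac{dx}{\rho r^4}=\int_{r(x_0,\tau)}^{a(\tau)}\frac{4\pi}{r^2}\,dr\le\frac{4\pi}{r(x_0,\tau)}\le C(x_0),
\]
so no control of $a(\tau)$ is needed. Your approach instead integrates $\px(r^2u)$ from $x=0$ to obtain a pointwise formula for $u$, and then invokes a uniform-in-$\xi$ bound on $a(\tau)$ extracted from (\ref{BoundaryBdd}) via $a^{3/2}\in H^1([0,T])$. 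Both arguments close; the paper's is slightly shorter and stays localized on $[x_0,M]$, while yours brings in the global quantity $\int_0^M\rho^{-1}dx=\frac{4\pi}{3}(a^3-\xi^3)$ and therefore needs that extra boundary input. The pointwise bound on $a(\tau)$ you derive is itself used later in the paper's compactness argument, so you are not introducing anything extraneous.
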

\begin{proof}For (\ref{boundRho}), from the mass equation in (\ref{nsp4}), we get:
    \begin{equation}
    \begin{aligned}
        \int_0^T\int_0^M\rho^{-3}(\ptau\rho)^2 dxd\tau=\int_0^T\int_0^M16\pi^2\rho\left|r^2\pxu+\frac{u}{2\pi\rho r}\right|^2 dxd\tau\\
        \le C\int_0^T\int_0^M\rho r^4(\pxu)^2+\frac{u^2}{\rho r^2} dxd\tau\le C.
    \end{aligned}
    \end{equation}
For (\ref{boundSigma}),
\begin{equation}
    \begin{aligned}
        \inttxm\frac{\sigma^2}{\rho} dxd\tau=\inttxm\rho^{-1}\left(\rho^{\gamma}-\pvcons4\pi\rho r^2\pxu-\pvcons\frac{2u}{r}\right)^2 dxd\tau\\
        \le C\inttxm\left(\rho^{2\gamma-1}+\rho r^4(\pxu)^2+\frac{u^2}{\rho r^2}\right) dxd\tau\le C.
    \end{aligned}
\end{equation}
For (\ref{boundU}), by Sobolev's inequality,
\begin{equation}
    \begin{aligned}
        \int_0^T\|u\|_{L^{\infty}([x_0,M])}^2dxd\tau\le C\int_0^T\left(\intxm|u| +|\pxu| dx\right)^2d\tau\\
        \le C\int_0^T\intxm u^2+\rho r^4(\pxu)^2+\frac{1}{\rho r^4} dxd\tau\le C.
    \end{aligned}
\end{equation}
\end{proof}
In the following lemma, we prove the spatial weighted $L^2$ estimate for $\px u$, which is the key estimate for the convergence of the free boundary.
\begin{lem}\label{HigherEnergyEst2}
    Let $T>0$, $\gamma\in(\frac{6}{5},\frac{4}{3}]$ and $(\rho,u,a)$ be any strong solution to the problem (\ref{apprE1})-(\ref{apprE4}) with initial data $(\rho_0,u_0)$ satisfying $\rho_0\in L^1([\xi,a_0);r^2dr)\cap L^{\gamma}([\xi,a_0);r^2dr)$, $\sqrt{\rho_0}u_0\in L^2([\xi,a_0);r^2dr)$ and (\ref{cond1}). Assume further that
    \begin{equation}
        \rho_0(x)\in \Linf([x_0,M]),\ (\frac{u_0}{\sqrt{\rho_0}r},\sqrt{\rho_0}r^2\partial_xu_0)\in L^2([x_0,M]),
    \end{equation}
    then
    \begin{equation}\label{HigherEnergyEst2_2}
    \begin{aligned}
        \int_{x_1}^M\frac{\sigma^2}{\rho}dx+\int_{x_1}^M\rho r^4(\partial_xu)^2+\frac{u^2}{\rho r^2}dx+\int_0^T\int_{x_1}^M(\ptu)^2dxd\tau\\
        \le C \int_{x_0}^M\rho_0 r^4(\partial_xu_0)^2+\frac{u_0^2}{\rho_0 r^2}dx+C\|\rho_0\|_{\Linf([x_0,M])}^{2\gamma-1},
    \end{aligned}
    \end{equation}
    where we recall $\sigma=\rho^{\gamma}-\pvcons(4\pi\rho r^2\pxu+\frac{2u}{r})$, and $C=C(x_0,x_1,E_0,M,T)$.
\end{lem}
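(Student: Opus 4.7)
The plan is an energy method based on testing the momentum equation against $\partial_\tau u$ weighted by the cutoff $\varphi$. The key algebraic observation is that the momentum equation in (\ref{apprL1}) can be rewritten via the effective viscous flux $\sigma$ from (\ref{EffVisFlux}): since $(\eta+\tfrac{4}{3}\varepsilon)(4\pi\rho r^2\partial_x u + \tfrac{2u}{r}) = \rho^\gamma - \sigma$, the pressure and viscous terms collapse into a single divergence, yielding
\[
\partial_\tau u + \frac{x}{r^2} + 4\pi r^2 \partial_x\sigma = 0.
\]
Multiplying by $\partial_\tau u\cdot\varphi$ and integrating over $[0,M]$, I integrate the $\sigma$-term by parts; the boundary at $x=M$ vanishes by $\sigma(M,\tau)=0$ and the one at $x=0$ vanishes by $\varphi(0)=0$.

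The heart of the proof is to identify the residual $-\int_0^M \sigma[\tfrac{2\partial_\tau u}{\rho r}+4\pi r^2\partial_x\partial_\tau u]\varphi\,dx$ as (essentially) the time derivative of $\int \sigma^2/\rho\cdot\varphi\,dx$. Using the mass equation $\partial_\tau\rho = -\rho(4\pi\rho r^2\partial_x u + \tfrac{2u}{r})$ together with $\partial_\tau r = u$, a direct computation gives
\[
4\pi\rho r^2\partial_x\partial_\tau u + \frac{2\partial_\tau u}{r} = \frac{\partial_\tau\rho^\gamma - \partial_\tau\sigma}{\eta+\tfrac{4}{3}\varepsilon} + (4\pi\rho r^2\partial_x u)^2 + \frac{2u^2}{r^2}.
\]
Then $-\int\frac{\sigma\partial_\tau\sigma}{\rho(\eta+\tfrac{4}{3}\varepsilon)}\varphi\,dx$ equals $-\tfrac{1}{2(\eta+\tfrac{4}{3}\varepsilon)}\tfrac{d}{d\tau}\int\tfrac{\sigma^2}{\rho}\varphi\,dx$ plus a cubic remainder from $\partial_\tau(1/\rho) = (\rho^\gamma-\sigma)/(\eta+\tfrac{4}{3}\varepsilon)$. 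Assembling everything produces an identity of the form
\[
\int_0^M (\partial_\tau u)^2\varphi\,dx + \frac{1}{2(\eta+\tfrac{4}{3}\varepsilon)}\frac{d}{d\tau}\int_0^M\frac{\sigma^2}{\rho}\varphi\,dx = R(\tau),
\]
where $R(\tau)$ collects the gravitational source $-\int\tfrac{x}{r^2}\partial_\tau u\,\varphi\,dx$, the cutoff error $\int 4\pi r^2\sigma\partial_\tau u\,\varphi'\,dx$ (supported in $[x_0,x_1]$), cubic terms $\int\tfrac{\sigma}{\rho}[(4\pi\rho r^2\partial_x u)^2 + \tfrac{2u^2}{r^2}]\varphi\,dx$, and lower-order $\rho^\gamma$-pieces.

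Each piece of $R(\tau)$ is bounded using the previously established estimates: Lemma \ref{BasicEnergyEst} for the basic energy, Lemma \ref{rhoIntegrability} for higher $L^{2\gamma}$-integrability of $\rho$, Lemma \ref{r_bound} for the lower bound $r\ge C E_0^{-1/(3(\gamma-1))} x_0^{\gamma/(3(\gamma-1))}$ on the support of $\varphi$, and the $L^2_\tau L^\infty_x$ bound on $u$ from Lemma \ref{HigherEnergyEst1}; Young's inequality with a small parameter absorbs any stray $(\partial_\tau u)^2$ into the LHS. Integrating in time and expanding $\sigma_0 = \rho_0^\gamma - (\eta+\tfrac{4}{3}\varepsilon)(4\pi\rho_0 r^2\partial_x u_0 + \tfrac{2u_0}{r})$ produces the initial-data RHS, where $(\rho_0^\gamma)^2/\rho_0 \le \|\rho_0\|_{L^\infty([x_0,M])}^{2\gamma-1}$ accounts for the $\|\rho_0\|_{L^\infty([x_0,M])}^{2\gamma-1}$ term. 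To recover the bound on $\int_{x_1}^M[\rho r^4(\partial_x u)^2 + u^2/(\rho r^2)]\,dx$, I use $(4\pi\rho r^2\partial_x u + \tfrac{2u}{r})^2 = (\rho^\gamma-\sigma)^2/(\eta+\tfrac{4}{3}\varepsilon)^2$ and rewrite the cross product $16\pi u r\partial_x u = 8\pi r\partial_x(u^2)$; integration by parts turns this cross term into $-\int\tfrac{2u^2}{\rho r^2}dx + 8\pi[ru^2]_{x_1}^M$, yielding separate lower bounds for $\int\rho r^4(\partial_x u)^2$ and $\int u^2/(\rho r^2)$, with the boundary term at $x_1$ controlled by the $L^\infty$ bound on $u$.

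The main obstacle is the algebraic bookkeeping needed to extract the perfect time derivative $\tfrac{d}{d\tau}\int\sigma^2/\rho\cdot\varphi\,dx$: several delicate cancellations from the mass equation and the defining relation of $\sigma$ must be tracked carefully, and the cubic remainder terms (e.g.\ $\int\sigma^3/\rho\cdot\varphi\,dx$) require using the $L^\infty$ bound on $\rho$ along particle paths away from the origin (Lemma \ref{rhobound}) to be absorbed. Once these points are handled, the remaining work is routine application of Young's inequality, the lemmas of Subsections \ref{Basic energy estimate}--\ref{Estimates in Lagrangian coordinates}, and (if necessary) Gronwall to close the inequality.
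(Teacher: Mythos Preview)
Your strategy is essentially the same as the paper's: test the momentum equation (rewritten as $\partial_\tau u + x/r^2 + 4\pi r^2\partial_x\sigma = 0$) against $\varphi\,\partial_\tau u$, integrate by parts, and extract the perfect derivative $\tfrac{d}{d\tau}\int\frac{\sigma^2}{\rho}\varphi\,dx$ from the $\sigma$-term; then close by Gronwall and recover $\int\rho r^4(\partial_x u)^2 + u^2/(\rho r^2)$ from the $\sigma^2/\rho$ bound exactly as you describe. The algebra you write for $4\pi\rho r^2\partial_x\partial_\tau u + \tfrac{2\partial_\tau u}{r}$ is correct and matches the paper's manipulation of $\partial_\tau(\sigma/\rho)$.

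The one place your proposal is too vague is the cubic remainder, specifically the $\int \sigma^3/\rho\cdot\varphi\,dx$ term. Lemma~\ref{rhobound} gives $L^\infty$ control on $\rho$, not on $\sigma$, so it does not by itself absorb this term. What the paper actually does (and what you need) is to bound $\|\sigma\sqrt{\varphi}\|_{L^\infty_x}$ pointwise in $\tau$ by integrating your own rewritten momentum equation from $x$ to $M$: since $\sigma(M,\tau)=0$,
\[
\sigma(x,\tau)\sqrt{\varphi(x)} = -\int_x^M \partial_y\bigl(\sigma\sqrt{\varphi}\bigr)\,dy
= -\int_x^M \sigma\,\partial_y\sqrt{\varphi}\,dy + \int_x^M \frac{1}{4\pi r^2}\Bigl(\partial_\tau u + \frac{y}{r^2}\Bigr)\sqrt{\varphi}\,dy,
\]
which gives $\|\sigma\sqrt{\varphi}\|_{L^\infty_x}^2 \le C_{x_0}\int(\partial_\tau u)^2\varphi\,dx + C\int\sigma^2\,dx + C$. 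Splitting the cubic term as $\delta\|\sigma\sqrt{\varphi}\|_{L^\infty_x}^2 + C\delta^{-1}\bigl(\int\rho^{-3}(\partial_\tau\rho)^2\,dx\bigr)\int\frac{\sigma^2}{\rho}\varphi\,dx$ then lets the first piece be absorbed into $\int(\partial_\tau u)^2\varphi$ (for $\delta$ small) and the second enter the Gronwall loop via Lemma~\ref{HigherEnergyEst1}. Once you add this step, the rest of your outline goes through; note also that Lemma~\ref{rhoIntegrability} is not actually needed here.
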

\begin{proof}
    Multiply the momentum equation in (\ref{apprL1}) by $\partial_{\tau}u\varphi$ and integrate it over $[x_0,M]$,
    \begin{equation}\label{EstDuSq}
        \intxm(\ptu)^2\varphi dx+\intxm4\pi r^2\ptu\partial_{x}\left(\rho^{\gamma}-4\pi\pvcons\rho\partial_x(r^2u)\right)\varphi dx+\intxm\frac{x}{r^2}\ptu\varphi dx=0.
    \end{equation}
    
     For the second term in (\ref{EstDuSq}), we integrate it by parts to get
    \begin{equation}
        \begin{aligned}
            I:=&\intxm4\pi r^2\ptu\partial_{x}\left(\rho^{\gamma}-4\pi\pvcons\rho\partial_x(r^2u)\right)\varphi dx\\
            =&\intxm4\pi\ptau(r^2u)\partial_x\sigma\varphi-8\pi ru^2\partial_x\sigma\varphi dx\\
            =&-\intxm4\pi\partial_x\ptau(r^2u)\sigma\varphi dx-\intxm4\pi\ptau(r^2u)\sigma\varphi'dx+\intxm2u^2(\frac{\ptu}{r}+\frac{x}{r^3})\varphi dx\\
            =&I_1+I_2+I_3,
        \end{aligned}
    \end{equation}
    where $I_1$ can be rewritten in terms of $\sigma=\rho^{\gamma}-\pvcons(4\pi\rho r^2\pxu+\frac{2u}{r})$ as follow 
    \begin{equation}
        \begin{aligned}
            I_1=&\intxm\left(\ptau\left(\frac{\sigma}{\pvcons\rho}\right)\sigma-\frac{\gamma-1}{\vcons}\rho^{\gamma-2}\ptau\rho\sigma\right)\varphi dx\\
            =&\intxm\left(\frac{1}{\vcons}\left(\frac{1}{\sqrt{\rho}}\ptau\frac{\sigma}{\sqrt{\rho}}+\frac{\sigma}{\sqrt{\rho}}\ptau\frac{1}{\sqrt{\rho}}\right)\sigma-\frac{\gamma-1}{\vcons}\rho^{\gamma-2}\ptau\rho\cdot\sigma\right)\varphi dx\\
            =&\intxm\frac{1}{2\pvcons}\ptau(\frac{\sigma^2}{\rho})\varphi dx-\intxm\frac{1}{2\pvcons}\sigma^2\rho^{-2}\ptau\rho\cdot\varphi dx-\intxm\frac{\gamma-1}{\vcons}\rho^{\gamma-2}\ptau\rho\cdot\sigma\varphi dx,
        \end{aligned}
    \end{equation}
    and for $I_2+I_3$, we have
    \begin{equation}
        \begin{aligned}
            I_2+I_3=&-\intxm4\pi\sigma\varphi'(r^2\ptu+2ru^2)dx+\intxm2u^2(\frac{\ptu}{r}+\frac{x}{r^3})\varphi dx,
        \end{aligned}
    \end{equation}
    so (\ref{EstDuSq}) becomes
    \begin{equation}
        \begin{aligned}
            &\intxm(\ptu)^2\varphi dx+\frac{1}{2\pvcons}\ptau\intxm\frac{\sigma^2}{\rho}\varphi dx\\
            =&\frac{1}{2\pvcons}\intxm\sigma^2\rho^{-2}\ptau\rho\cdot\varphi dx+\frac{\gamma-1}{\vcons}\intxm\rho^{\gamma-2}\ptau\rho\cdot\sigma\varphi dx\\
            &+\intxm4\pi\sigma\varphi'(r^2\ptu+2ru^2)dx-\intxm2u^2(\frac{\ptu}{r}+\frac{x}{r^3})\varphi dx-\intxm\frac{x}{r^2}\ptu\varphi dx\\
            \le&\delta\|\sigma\sqphi\|^2_{\Linf([x_0,M])}+\frac{C}{\delta}\left(\intxm\rho^{-3}(\ptau\rho)^2dx\right)\intxm\frac{\sigma^2}{\rho}\varphi dx+C\|\rho^{\gamma}\sqphi\|_{\Linf([x_0,M])}^2\\
            &+\delta\intxm(\ptu)^2\varphi dx+\frac{C}{\delta}\intxm\sigma^2 r^4 dx+C\intxm\sigma^2 dx+C\|ur\sqphi\|_{\Linf([x_0,M])}^2\intxm u^2dx\\
            &+\delta\intxm(\ptu)^2\varphi dx+\frac{C}{\delta}\|\frac{u}{r}\sqphi\|_{\Linf([x_0,M])}^2\intxm u^2dx+\intxm\frac{2x}{r^3}\varphi\cdot u^2dx\\
            &+\delta\intxm(\ptu)^2\varphi dx+\frac{C}{\delta}\intxm\frac{x^2}{r^4}\varphi dx,
        \end{aligned}
    \end{equation}
    which, combined with Lemma \ref{rhobound} and \ref{HigherEnergyEst1}, gives
    \begin{equation}
        \begin{aligned}
            &(1-3\delta)\intxm(\ptu)^2\varphi dx+\frac{1}{2\pvcons}\ptau\intxm\frac{\sigma^2}{\rho}\varphi dx\\
            \le&\frac{C}{\delta}\left(\intxm\rho^{-3}(\ptau\rho)^2dx\right)\intxm\frac{\sigma^2}{\rho}\varphi dx+\delta\|\sigma\sqphi\|_{\Linf([x_0,M])}^2+\frac{C}{\delta}\|u\sqphi\|_{\Linf([x_0,M])}^2\\&+C\intxm\sigma^2dx+C.
        \end{aligned}
    \end{equation}
    For the term $\|\sigma\sqphi\|_{\Linf([x_0,M])}^2$, we estimate as follows
    \begin{equation}
        \begin{aligned}
            \sigma\sqphi=-\int_x^M\partial_y(\sigma\sqphi)dy=-\int_x^M\sigma\partial_y\sqphi dy+\int_x^M(\frac{\ptu}{4\pi r^2}+\frac{x}{4\pi r^4})\sqphi dy,
        \end{aligned}
    \end{equation}
    which gives
    \begin{equation}
        \|\sigma\sqphi\|_{\Linf([x_0,M])}^2\le C_{x_0}\intxm(\ptu)^2\varphi dx+C\intxm\sigma^2dx+C.
    \end{equation}
    As a result, we have
    \begin{equation}
        \begin{aligned}
            &(1-3\delta-C_{x_0}\delta)\intxm(\ptu)^2\varphi dx+\frac{1}{2\pvcons}\ptau\intxm\frac{\sigma^2}{\rho}\varphi dx\\
            \le&\frac{C}{\delta}\left(\intxm\rho^{-3}(\ptau\rho)^2dx\right)\intxm\frac{\sigma^2}{\rho}\varphi dx+\frac{C}{\delta}\|u\sqphi\|_{\Linf([x_0,M])}^2+C\intxm\sigma^2dx+C.
        \end{aligned}
    \end{equation}
    Choose $\delta\in(0,1)$ small enough, by Gronwall's inequality and make use of Lemma \ref{HigherEnergyEst1}, we have
    \begin{equation}
    \begin{aligned}
    \intxm\frac{\sigma^2}{\rho}\varphi dx+\int_0^T\intxm(\ptu)^2\varphi dxd\tau
        \le& C\intxm\rho_0r^4(\partial_xu_0)^2+\frac{u_0^2}{\rho_0r^2}dx+C\|\rho_0\|^{2\gamma-1}_{\Linf([x_0,M])}.
    \end{aligned}
    \end{equation}

    It remains to show the control of $\intxm\left(\rho r^4(\partial_xu)^2+\frac{u^2}{8\pi^2\rho r^2}\right)\varphi dx$. Since
    \begin{equation}
    \begin{aligned}
        \intxm\frac{1}{\rho}(\rho\partial_x(r^2u))^2\varphi dx&=\intxm\left(\rho r^4(\partial_xu)^2+\frac{u^2}{4\pi^2\rho r^2}+\frac{r^2u\partial_xu}{\pi r}\right)\varphi dx\\
        &=\intxm\left(\rho r^4(\partial_xu)^2+\frac{u^2}{8\pi^2\rho r^2}\right)\varphi dx+\frac{ru^2}{2\pi}|_{x=M}-\intxm\frac{u^2r}{2\pi}\varphi'dx,
    \end{aligned}
    \end{equation}
    recalling $\sigma=\rho^{\gamma}-\pvcons(4\pi\rho r^2\pxu+\frac{2u}{r})$, we have
    \begin{equation}
        \begin{aligned}
            &\intxm\left(\rho r^4(\partial_xu)^2+\frac{u^2}{8\pi^2\rho r^2}\right)\varphi dx+\frac{ru^2}{2\pi}|_{x=M}\\\le& C\intxm u^2dx+\intxm\frac{1}{\rho}(\rho\partial_x(r^2u))^2\varphi dx\\
            \le& C\intxm u^2+\frac{\sigma^2}{\rho}\varphi+\rho^{2\gamma-1}\varphi dx\\
            \le& C\intxm\rho_0r^4(\partial_xu_0)^2+\frac{u_0^2}{\rho_0r^2}dx+C\|\rho_0\|^{2\gamma-1}_{\Linf([x_0,M])},
        \end{aligned}
    \end{equation}
    as desired.
\end{proof}
\begin{rmk}\label{dsigmabound}
    With the momentum equation in (\ref{apprL1}), it also follows easily from (\ref{HigherEnergyEst2_2}) that
    \begin{equation}
        \int_0^T\int_{x_1}^M(\px\sigma)^2dxd\tau\le C\intxm\rho_0r^4(\partial_xu_0)^2+\frac{u_0^2}{\rho_0r^2}dx+C\|\rho_0\|^{2\gamma-1}_{\Linf([x_0,M])}.
    \end{equation}
\end{rmk}
The following lemma gives the spatial $H^1$ regularity of the density.
\begin{lem}\label{HigherEnergyEst3}
    Let $T>0$, $\gamma\in(\frac{6}{5},\frac{4}{3}]$ and $(\rho,u,a)$ be any strong solution to the problem (\ref{apprE1})-(\ref{apprE4}) with initial data $(\rho_0,u_0)$ satisfying $\rho_0\in L^1([\xi,a_0);r^2dr)\cap L^{\gamma}([\xi,a_0);r^2dr)$, $\sqrt{\rho_0}u_0\in L^2([\xi,a_0);r^2dr)$ and (\ref{cond1}). Assume further that $\partial_x\rho_0^q\in L^2([x_0,M])$, then
    \begin{equation}
        \int_{x_1}^M(\partial_x\rho^q)^2dx\le C\int_{x_1}^M(\partial_x\rho^q_0)^2dx+C\int_{x_0}^M\rho_0 r^4(\partial_xu_0)^2+\frac{u_0^2}{\rho_0 r^2}dx+C\|\rho_0\|_{\Linf([x_0,M])}^{2\gamma-1},
    \end{equation}
    where $C=C(x_0,x_1,E_0,M,T)$, $\frac{1}{2}<q=k+\frac{1}{2}\le\gamma$.
\end{lem}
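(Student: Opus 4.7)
The plan is to derive a transport-type evolution equation for $\px(\rho^q)$ from the mass equation combined with the effective viscous flux \eqref{EffVisFlux}, then close a weighted Gronwall estimate using the $\sigma$ and $\px\sigma$ bounds from Lemma \ref{HigherEnergyEst2} and Remark \ref{dsigmabound}.

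First, using $\px r = (4\pi\rho r^2)^{-1}$, the Lagrangian mass equation in \eqref{apprL1} can be rewritten as $\ptau\rho = -\rho(4\pi\rho r^2\px u + \tfrac{2u}{r})$. Substituting from \eqref{EffVisFlux} yields the clean relation
\begin{equation*}
\ptau\rho^q = \frac{q\rho^q(\sigma - \rho^\gamma)}{\vcons}.
\end{equation*}
Differentiating in $x$ and using $\px\rho^\gamma = \tfrac{\gamma}{q}\rho^{\gamma-q}\px\rho^q$, I obtain
\begin{equation*}
\ptau\px(\rho^q) = \frac{q\sigma - (q+\gamma)\rho^\gamma}{\vcons}\,\px(\rho^q) + \frac{q\rho^q}{\vcons}\,\px\sigma.
\end{equation*}

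With a cutoff $\varphi$ identically $1$ on $[x_1,M]$ and supported strictly inside a slightly larger subinterval of $[x_0,M]$ on which the $\sigma$ and $\px\sigma$ bounds are available, I would multiply this identity by $\px(\rho^q)\varphi$ and integrate. Applying Cauchy--Schwarz to the $\px\sigma$ cross term and using the uniform $\rho\in\Linf$ bound from Lemma \ref{rhobound} yields
\begin{equation*}
\frac{d}{d\tau}\intxm(\px\rho^q)^2 \varphi\, dx \le C\bigl(\|\sigma(\cdot,\tau)\|_{\LinfxM} + 1\bigr)\intxm(\px\rho^q)^2\varphi\, dx + C\intxm\rho^{2q}\varphi(\px\sigma)^2\, dx.
\end{equation*}

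Gronwall's inequality then closes the estimate. The forcing $\int_0^T\intxm\rho^{2q}\varphi(\px\sigma)^2\,dx\,d\tau$ is controlled by Remark \ref{dsigmabound} together with Lemma \ref{rhobound}. The coefficient $\|\sigma(\cdot,\tau)\|_{\LinfxM}$ is integrable in time because, as shown within the proof of Lemma \ref{HigherEnergyEst2}, $\|\sigma\sqphi\|_{\Linf}^2 \le C_{x_0}\intxm(\ptu)^2\varphi\,dx + C\intxm\sigma^2\,dx + C$, whose right-hand side lies in $L^1(0,T)$ thanks to the conclusion of Lemma \ref{HigherEnergyEst2} and the density bound. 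The initial contribution $\intxm(\px\rho_0^q)^2\varphi\,dx$ and the dependencies inherited from Lemma \ref{HigherEnergyEst2}---namely $\|\rho_0\|_{\Linf([x_0,M])}^{2\gamma-1}$ and $\int_{x_0}^M \rho_0 r^4(\px u_0)^2 + u_0^2/(\rho_0 r^2)\,dx$---then combine to produce the stated bound. The principal technical obstacle is organizing nested cutoffs so that the $\sigma$ and $\px\sigma$ bounds of Lemma \ref{HigherEnergyEst2} and Remark \ref{dsigmabound}, which are only guaranteed on intervals of the form $[x_1,M]$, are genuinely available on the support of $\varphi$; this is arranged by inserting intermediate points $x_0 < y_0 < y_1 < x_1$ and invoking the earlier lemmas with the shifted triple $(y_0, y_1, x_1)$ before running the Gronwall step with the original $\varphi$.
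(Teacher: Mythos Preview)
Your proof is correct and essentially the same as the paper's. The paper differentiates the mass equation, multiplies by $\px\rho^q$, and then invokes the momentum equation to replace $\px(\rho\px(r^2u))$ by $\frac{1}{4\pi\pvcons r^2}(\ptu+4\pi r^2\px\rho^\gamma+\frac{x}{r^2})$, arriving at a Gronwall inequality with coefficient $\|\rho\|_{\Linf}^\gamma+\|\sigma\|_{\Linf}+\|u\|_{\Linf}$ and forcing $\int(\frac{|\ptu|}{r^2}+\frac{x}{r^4})^2dx$. Your route via the identity $\ptau\rho^q=\frac{q\rho^q(\sigma-\rho^\gamma)}{\vcons}$ is algebraically equivalent, since $\px\sigma=-\frac{1}{4\pi r^2}(\ptu+\frac{x}{r^2})$ by the momentum equation, so your forcing $\rho^{2q}(\px\sigma)^2$ is the same quantity up to bounded factors; the $\sigma$-formulation is arguably cleaner. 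The only unnecessary complication in your write-up is the cutoff: the paper integrates directly over $[x_1,M]$ and uses that the needed bounds from Lemmas \ref{HigherEnergyEst1}--\ref{HigherEnergyEst2} are already available there, so no nested-interval bookkeeping is required.
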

\begin{proof}
    Multiply the mass equation in (\ref{apprL1}) by $q\rho^{q-1}$ and take $\partial_x$,
    \begin{equation}
        \partial_{\tau x}\rho^q+4\pi q\partial_x(\rho^{q+1}\partial_x(r^2u))=0,
    \end{equation}
    multiply it by $\partial_x\rho^q$ and integrate it over $[x_1,M]$, we get
    \begin{equation}
        \int_{x_1}^M\partial_x\rho^q\partial_{\tau x}\rho^q+4\pi q(\partial_x\rho^q)^2\rho\partial_x(r^2u)+4\pi q\partial_x\rho^q\cdot\rho^q\partial_x(\rho\partial_x(r^2u))dx=0.
    \end{equation}
    Making use of the momentum equation in (\ref{apprL1}), we get
    \begin{equation}
        \begin{aligned}
            &\frac{1}{2}\ptau\int_{x_1}^M(\partial_x\rho^q)^2dx\\=&-\int_{x_1}^M4\pi q(\partial_x\rho^q)^2\rho\partial_x(r^2u)dx-q\int_{x_1}^M\rho^q\partial_x\rho^q\frac{1}{4\pi \pvcons r^2}(\ptu+4\pi r^2\partial_x\rho^{\gamma}+\frac{x}{r^2})dx.
        \end{aligned}
    \end{equation}
    Using Lemma \ref{HigherEnergyEst1},  \ref{HigherEnergyEst2} and the fact that $\rho r^2\px u=\frac{1}{4\pi(\eta+\frac{4}{3}\varepsilon)}(\rho^{\gamma}-\sigma)-\frac{u}{2\pi r}$, we obtain
    \begin{equation}
    \begin{aligned}
        &\frac{1}{2}\ptau\int_{x_1}^M(\partial_x\rho^q)^2dx\\
        \le&4\pi q\int_{x_1}^M\left|\rho r^2\partial_xu+\frac{u}{2\pi r}\right|(\partial_x\rho^q)^2dx+\frac{q}{4\pi\pvcons}\int_{x_1}^M|\rho^q\partial_x\rho^q|\left(\left|\frac{\ptu}{r^2}\right|+\frac{x}{r^4}\right)+\frac{\gamma}{q}|\rho^{\gamma}(\partial_x\rho^q)^2|dx\\
            \le&C\left(\|\rho\|_{\Linf([x_0,M])}^{\gamma}+\|\sigma\sqphi\|_{\Linf([x_0,M])}+\|u\sqphi\|_{\Linf([x_0,M])}\right)\int_{x_1}^M(\partial_x\rho^q)^2dx\\
            &+C\|\rho\|_{\Linf([x_0,M])}^{2q}\int_{x_1}^M(\partial_x\rho^q)^2dx+\int_{x_1}^M\left(\left|\frac{\ptu}{r^2}\right|+\frac{x}{r^4}\right)^2dx.
        \end{aligned}
    \end{equation}
    By Gronwall's inequality,
    \begin{equation}
        \int_{x_1}^M(\partial_x\rho^q)^2dx\le C\int_{x_1}^M(\partial_x\rho^q_0)^2dx+C\int_{x_0}^M\rho_0 r^4(\partial_xu_0)^2+\frac{u_0^2}{\rho_0 r^2}dx+C\|\rho_0\|_{\Linf([x_0,M])}^{2\gamma-1}.
    \end{equation}
\end{proof}
The following lemma shows, in addition to the initial conditions (\ref{conditionsI}) required by Theorem \ref{Thm1: ExistenceOfGlobalWkSoln}, if we assume further that $\partial_x(\rho_0\partial_x(r^2u_0))\in L^2([x_1,M])$, we will get the spatial $L^2$ estimate for $\ptau u$.
\begin{lem}\label{HigherEnergyEst4}
     Let $T>0$, $\gamma\in(\frac{6}{5},\frac{4}{3}]$ and $(\rho,u,a)$ be any strong solution to the problem (\ref{apprE1})-(\ref{apprE4}) with initial data $(\rho_0,u_0)$ satisfying $\rho_0\in L^1([\xi,a_0);r^2dr)\cap L^{\gamma}([\xi,a_0);r^2dr)$, $\sqrt{\rho_0}u_0\in L^2([\xi,a_0);r^2dr)$ and (\ref{cond1}). Assume further that $\partial_x(\rho_0\partial_x(r^2u_0))\in L^2([x_1,M])$, then
     \begin{equation}
         \begin{aligned}
             &\int_{x_2}^M(\ptu)^2 dx+\int_0^T\int_{x_2}^M\frac{(\ptau\sigma)^2}{\rho}dxd\tau+\int_0^T\int_{x_2}^M4\pi\rho r^4(\partial_{\tau x}u)^2+\frac{(\ptu)^2}{\rho r^2}dxd\tau+\int_0^Ta(t)(a''(t))^2d\tau\\\le& C\int_{x_1}^M(\partial_x(\rho_0\partial_x(r^2u_0)))^2dx+C\int_{x_1}^M(\partial_x\rho^q_0)^2dx+C\int_{x_0}^M\rho_0 r^4(\partial_xu_0)^2+\frac{u_0^2}{\rho_0 r^2}dx+C\|\rho_0\|_{\Linf([x_0,M])}^{2\gamma-1},
         \end{aligned}
     \end{equation}
     where $\sigma=\rho^{\gamma}-\pvcons(4\pi\rho r^2\pxu+\frac{2u}{r})$ and $C=C(x_0,x_1,x_2,E_0,M,T)$.
\end{lem}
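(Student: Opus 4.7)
The plan is to differentiate the momentum equation in \eqref{apprL1} with respect to $\tau$, multiply the resulting equation by $\partial_\tau u\cdot\psi$, and integrate over $[0,M]$; here $\psi=\zeta^2$ is the cutoff introduced at the start of this subsection, which equals $1$ on $[x_2,M]$, vanishes on $[0,x_1+(x_2-x_1)/2]$, and satisfies $\psi(M)=1$. Writing the momentum equation as $\partial_\tau u = -4\pi r^2\partial_x\sigma - x/r^2$ and setting $v:=\partial_\tau u$, differentiation in $\tau$ (using $\partial_\tau r=u$, $\partial_\tau(x/r^2)=-2xu/r^3$) gives
$$\partial_\tau v + 4\pi r^2\partial_{\tau x}\sigma + 8\pi ru\partial_x\sigma - \frac{2xu}{r^3} = 0.$$

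Testing against $v\psi$ produces $\tfrac{1}{2}\tfrac{d}{d\tau}\int v^2\psi\,dx$ from the first term. Integrating $4\pi r^2\partial_{\tau x}\sigma$ by parts in $x$ kills the boundary contribution at $x=M$ via $\sigma(M,\tau)=0\Rightarrow\partial_\tau\sigma(M,\tau)=0$, and at $x=0$ via $\psi(0)=0$. I then expand
$$\partial_\tau\sigma = \gamma\rho^{\gamma-1}\partial_\tau\rho - 4\pi\left(\eta+\tfrac{4}{3}\varepsilon\right)\partial_\tau\bigl(\rho\partial_x(r^2u)\bigr),$$
and isolate the high-derivative piece $-(\eta+\tfrac{4}{3}\varepsilon)\bigl[4\pi\rho r^2\partial_{\tau x}u + 2v/r\bigr]$ inside $\partial_\tau\sigma$. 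Paired against the factors $v\psi/(2\pi\rho r)$ and $r^2\partial_x v\cdot\psi$ produced by the IBP, these generate the dissipation terms $\int\rho r^4(\partial_{\tau x}u)^2\psi\,dx$ and $\int v^2/(\rho r^2)\psi\,dx$ on the LHS, with positive coefficients proportional to $\eta+\tfrac{4}{3}\varepsilon$.

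The mixed cross term $\int rv\partial_{\tau x}u\cdot\psi\,dx$ equals $\tfrac{1}{2}\int r\psi\cdot\partial_x(v^2)\,dx$, which after one more IBP yields $\tfrac{1}{2}a(\tau)v(M,\tau)^2 - \tfrac{1}{2}\int(\partial_x r\cdot\psi + r\psi')v^2\,dx$. Since $v(M,\tau) = a''(\tau)$, the boundary piece supplies precisely the $a(\tau)(a''(\tau))^2$ contribution on the LHS of the target inequality. All remaining terms on the right are lower-order combinations of $\rho$, $u$, $v$, $\sigma$, $\partial_\tau\rho$, $\partial_x u$, and the cutoff derivative $\psi'$; I control them using the preceding lemmas: the basic energy estimate, the pointwise density bounds along particle paths from Lemma \ref{rhobound}, the space-time bounds on $\int\rho^{-3}(\partial_\tau\rho)^2$, $\int\sigma^2/\rho$, and $\int\|u\|_{L^\infty}^2$ from Lemma \ref{HigherEnergyEst1}, the weighted $L^2$ estimate on $\sigma$ and $\partial_x\sigma$ from Lemma \ref{HigherEnergyEst2} and Remark \ref{dsigmabound}, and the spatial $H^1$ bound on $\rho^q$ from Lemma \ref{HigherEnergyEst3}. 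Sobolev embedding on $[x_1,M]$, where $\rho$ and $r$ are uniformly bounded above and below, controls $\|\sigma\|_{L^\infty}$ and $\|v\|_{L^\infty}$ by their $H^1$ norms plus lower-order terms, so a small fraction of the dissipation can be absorbed back on the LHS.

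Closing via Gronwall then yields the pointwise-in-$\tau$ bound on $\int v^2\psi\,dx$ together with the time-integrated dissipation bounds. The initial value of $\int v^2\psi\,dx$ at $\tau=0$ is controlled by the new assumption $\partial_x(\rho_0\partial_x(r^2u_0))\in L^2([x_1,M])$ through the momentum equation evaluated at $\tau=0$. Finally, $\int_0^T\int_{x_2}^M(\partial_\tau\sigma)^2/\rho\,dxd\tau$ is recovered from the principal-part decomposition of $\partial_\tau\sigma$ combined with the just-established dissipation bounds and the $\partial_\tau\rho$ control from Lemma \ref{HigherEnergyEst1} (using also the uniform upper bound on $\rho$ to compare $\rho^{2\gamma-3}$ with $\rho^{-3}$). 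The main technical obstacle is arranging the IBP of the cross term so that $a(\tau)(a''(\tau))^2$ appears on the LHS with the correct sign, while simultaneously ensuring that all remainders from the lower-order pieces of $\partial_\tau\sigma$ and from $\psi'$ (which sits on the intermediate region $[x_1,x_2]$ where only weaker estimates are available) can be absorbed using only previously controlled quantities.
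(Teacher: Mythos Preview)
Your plan is correct and follows the same overall scheme as the paper: differentiate the momentum equation in $\tau$, test against $\partial_\tau u\cdot\psi$, integrate by parts using $\partial_\tau\sigma(M,\tau)=0$, and close by Gronwall with the earlier lemmas supplying control of the lower-order terms; the initial datum for $\int(\partial_\tau u)^2\psi\,dx$ is handled exactly as you say, via the momentum equation at $\tau=0$ and the hypothesis on $\partial_x(\rho_0\partial_x(r^2u_0))$.

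The one organizational difference is where the dissipation lives. You expand $\partial_\tau\sigma$ directly, so that $\int\rho r^4(\partial_{\tau x}u)^2\psi$, $\int v^2/(\rho r^2)\psi$, and the boundary term $a(\tau)(a''(\tau))^2$ all appear as the coercive pieces inside the Gronwall loop, and you recover $\int(\partial_\tau\sigma)^2/\rho$ afterward from the decomposition. The paper does the reverse: it rewrites $4\pi\partial_{\tau x}(r^2u)=(\eta+\tfrac{4}{3}\varepsilon)^{-1}\partial_\tau\bigl((\rho^\gamma-\sigma)/\rho\bigr)$ so that the single term $-(\eta+\tfrac{4}{3}\varepsilon)^{-1}\int(\partial_\tau\sigma)^2/\rho\,\psi\,dx$ emerges as the dissipation in the Gronwall step, and only afterward extracts $\rho r^4(\partial_{\tau x}u)^2$, $v^2/(\rho r^2)$, and $a(a'')^2$ by expanding $\int\rho(\partial_\tau\partial_x(r^2u))^2\psi\,dx$ and integrating the cross term by parts. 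The paper's ordering is slightly cleaner because the dissipation is one term and the lower-order pieces coming from $\partial_\tau\rho$ in $\partial_\tau((\rho^\gamma-\sigma)/\rho)$ are paired only against $\partial_\tau\sigma$, not against $r^2\partial_x v$; in your route those same lower-order pieces hit $r^2\partial_x v$ and must be absorbed by a small multiple of the $\rho r^4(\partial_{\tau x}u)^2$ dissipation, which works but adds a step. Either arrangement yields the stated estimate.
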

\begin{proof}
We apply $\ptau$ to the momentum equation in (\ref{apprL1}) and multiply it by $\ptu\psi$ to get
    \begin{equation}
        \ptu\cdot\partial^2_{\tau}u\psi+4\pi\ptu\cdot\ptau(r^2\px\rho^{\gamma})\psi+\ptu\cdot\ptau\left(\frac{x}{r^2}\right)\psi=\ptu\cdot16\pi^2\ptau\left(r^2\px\left(\pvcons\rho\px(r^2u)\right)\right)\psi,
    \end{equation}
    integrate it over $[x_1,M]$, we get
    \begin{equation}
        \begin{aligned}
            \frac{1}{2}\ptau\int_{x_1}^M(\ptu)^2\psi dx=&-\int_{x_1}^M8\pi ru\ptu\px\left(\rho^{\gamma}-4\pi\pvcons\rho\px(r^2u)\right)\psi dx\\
            &-\int_{x_1}^M4\pi r^2\ptu\cdot\ptau\px\left(\rho^{\gamma}-4\pi\pvcons\rho\px(r^2u)\right)\psi dx\\&+\int_{x_1}^M u\ptu\frac{2x}{r^3}\psi dx\\
            =&I_1+I_2+I_3.
        \end{aligned}
    \end{equation}
     For $I_1$, we use the momentum equation in (\ref{apprL1})
    \begin{equation}
        \begin{aligned}
            I_1=&\int_{x_1}^M8\pi ru\ptu\cdot\frac{1}{4\pi r^2}(\ptu+\frac{x}{r^2})\psi dx\\
            \le&C\left(\|u\|_{\Linf([x_1,M])}+1\right)\int_{x_1}^M(\ptu)^2\psi dx+C\int_{x_1}^M u^2dx\\
            \le&C\left(\|u\|_{\Linf([x_1,M])}+1\right)\int_{x_1}^M(\ptu)^2\psi dx+C.
        \end{aligned}
    \end{equation}
    For $I_3$, we estimate as follows
    \begin{equation}
        I_3=\int_{x_1}^M u\ptu\frac{2x}{r^3}\psi dx\le C\int_{x_1}^M u^2\psi dx+C\int_{x_1}^M(\ptu)^2\psi dx\le C\int_{x_1}^M(\ptu)^2\psi dx+C.
    \end{equation}
    For $I_2$, by integration by parts and rewrite it in terms of $\sigma$, we have
    \begin{equation}
        \begin{aligned}
            I_2=&\int_{x_1}^M4\pi\px(r^2\ptau u)\ptau(\rho^{\gamma}-4\pi\pvcons\partial_x(r^2u))\psi+4\pi r^2\ptau u\ptau\sigma\psi'dx\\
            =&\int_{x_1}^M4\pi(\partial_{\tau x}(r^2u)-\partial_x(2ru^2))\ptau\sigma\psi+4\pi r^2\ptau u\ptau\sigma\psi'dx\\
            =&\int_{x_1}^M4\pi\left[\frac{1}{4\pi\pvcons}\ptau\left(\frac{\rho^{\gamma}-\sigma}{\rho}\right)-\partial_x(2ru^2)\right]\ptau\sigma\psi+4\pi r^2\ptau u\ptau\sigma\psi'dx\\
            =&\int_{x_1}^M4\pi\left[\frac{1}{4\pi\pvcons}\left((\gamma-1)\rho^{\gamma-2}\ptau\rho\ptau\sigma-\frac{1}{\rho}(\ptau \sigma)^2+\frac{1}{\rho^2}\sigma\ptau\rho\ptau\sigma\right)\right]\psi dx\\&
            -\int_{x_1}^M\left(\frac{2u^2}{\rho r^2}\ptau\sigma\psi+16\pi ru\pxu\ptau\sigma\psi\right) dx+\int_{x_1}^M4\pi r^2\ptau u\ptau\sigma\psi'dx.
        \end{aligned}
    \end{equation}
    Making use of the mass equation in (\ref{apprL1}) to replace $\ptau\rho$, we obtain
    \begin{equation}
        \begin{aligned}
            I_2=&\int_{x_1}^M-\frac{1}{\eta+\frac{4}{3}\varepsilon}\frac{(\ptau \sigma)^2}{\rho}\psi dx\\
            &+\int_{x_1}^M\left(\frac{1-\gamma}{\vcons}\rho^{\gamma-\frac{1}{2}}(4\pi\sqrt{\rho} r^2\partial_xu+\frac{2u}{\sqrt{\rho} r})\ptau\sigma+\frac{1}{\eta+\frac{4}{3}\varepsilon}(4\pi r^2\partial_xu+\frac{2u}{\rho r})\sigma\ptau\sigma\right)\psi dx\\&
            -\int_{x_1}^M\left(\frac{2u^2}{\rho r^2}\ptau\sigma\psi+16\pi ru\pxu\ptau\sigma\psi\right) dx+\int_{x_1}^M4\pi r^2\ptau u\ptau\sigma\psi'dx\\
            \le&-\frac{1}{\eta+\frac{4}{3}\varepsilon}\int_{x_1}^M\frac{(\ptau \sigma)^2}{\rho}\psi dx+5\delta\int_{x_1}^M\frac{(\ptau \sigma)^2}{\rho}\psi dx\\
            &+\frac{C}{\delta}(\|\sigma\sqrt{\psi}\|_{\Linf([x_1,M])}+1)\int_{x_1}^M\left(4\pi\sqrt{\rho} r^2\partial_xu+\frac{2u}{\sqrt{\rho} r}\right)^2dx+\frac{C}{\delta}\|u\sqrt{\psi}\|_{\Linf([x_1,M])}^2\int_{x_1}^M\frac{u^2}{\rho r^2}dx
            \\&+\frac{C}{\delta}\|u\sqrt{\psi}\|_{\Linf([x_1,M])}^2\int_{x_1}^M4\pi\rho r^4(\partial_xu)^2dx+\frac{C}{\delta}\|\rho\|_{\Linf([x_1,M])}\int_{x_1}^M(\ptu)^2dx.
        \end{aligned}
    \end{equation}
    Choose $\delta=\frac{1}{10}\frac{1}{\eta+\frac{4}{3}\varepsilon}$, we obtain
    \begin{equation}
    \begin{aligned}
        &\frac{1}{2}\ptau\int_{x_1}^M(\ptu)^2\psi dx+\frac{1}{2(\eta+\frac{4}{3}\varepsilon)}\int_{x_1}^M\frac{(\ptau\sigma)^2}{\rho}\psi dx\\
        \le& C\left(\|u\|_{\Linf([x_1,M])}+\|\rho\|_{\Linf([x_1,M])}+1\right)\int_{x_1}^M(\ptu)^2\psi dx\\
        &+C\left(\|u\sqrt{\psi}\|_{\Linf([x_1,M])}^2+\|\sigma\sqrt{\psi}\|_{\Linf([x_1,M])}^2+1\right)\int_{x_1}^M4\pi\rho r^4(\partial_xu)^2+\frac{u^2}{\rho r^2}dx+C.
    \end{aligned}
    \end{equation}
    By Gronwall's inequality, 
    \begin{equation}
    \begin{aligned}
         &\int_{x_1}^M(\ptu)^2\psi dx+\int_0^T\int_{x_1}^M\frac{(\ptau\sigma)^2}{\rho}\psi dxd\tau\\\le& C\int_{x_1}^M(\partial_x(\rho_0\partial_x(r^2u_0)))^2dx+C\int_{x_1}^M(\partial_x\rho^q_0)^2dx+C\int_{x_0}^M\rho_0 r^4(\partial_xu_0)^2+\frac{u_0^2}{\rho_0 r^2}dx+C\|\rho_0\|_{\Linf([x_0,M])}^{2\gamma-1}.
    \end{aligned}
    \end{equation}

    As for $\int_0^T\int_{x_2}^M4\pi\rho r^4(\partial_{\tau x}u)^2+\frac{(\ptu)^2}{\rho r^2}dxd\tau$, we control it by $\ptau\sigma$. We start from
    \begin{equation}\label{HigherEnergyEst4: 1}
        \begin{aligned}
            &\int_0^T\int_{x_1}^M\frac{1}{\rho}(\ptau \sigma)^2\psi dxd\tau=\int_0^T\int_{x_1}^M\frac{1}{\rho}\left[\ptau(\rho^{\gamma}-4\pi\pvcons\rho\px(r^2u))\right]^2\psi dxd\tau\\
            =&\int_0^T\int_{x_1}^M\left[\gamma\rho^{\gamma-\frac{3}{2}}\ptau\rho-4\pi\rho^{-\frac{1}{2}}\pvcons(\ptau\rho)\px(r^2u)-4\pi\pvcons\rho^{\frac{1}{2}}\ptau\px(r^2u)\right]^2\psi dxd\tau.
        \end{aligned}
    \end{equation}
    The first and second term on the right-hand side of (\ref{HigherEnergyEst4: 1}) can be controlled by the previous estimates. In fact, for the first term, we have
    \begin{equation}
        \int_0^T\int_{x_1}^M\rho^{2\gamma-3}(\ptau\rho)^2dxd\tau\le\|\rho\|^{2\gamma}_{\Linf([x_1,M])}\int_0^T\int_{x_1}^M\rho^{-3}(\ptau\rho)^2dxd\tau\le C\|\rho\|^{2\gamma}_{\Linf([x_1,M])}.
    \end{equation}
    Next, we estimate the second term on the right-hand side of (\ref{HigherEnergyEst4: 1}) as follow
    \begin{equation}
    \begin{aligned}
        \int_0^T\int_{x_1}^M\rho^{-1}(\ptau\rho)^2(\px(r^2u))^2\psi dxd\tau\le&\int_0^T\|\rho^{-1}\ptau\rho\sqrt{\psi}\|^2_{\Linf([x_1,M])}\int_{x_1}^M(\sqrt{\rho} r^2(\pxu)+\frac{u}{2\pi\sqrt{\rho}r})^2dxd\tau\\
        \le&C\int_0^T\|\rho^{-1}\ptau\rho\sqrt{\psi}\|^2_{\Linf([x_1,M])}d\tau,
    \end{aligned}
    \end{equation}
    by Sobolev embedding and the mass equation in (\ref{apprL1}), we have
    \begin{equation}
        \begin{aligned}
            &\int_0^T\|\rho^{-1}\ptau\rho\sqrt{\psi}\|^2_{\Linf([x_1,M])}d\tau\le C\int_0^T\left(\int_{x_1}^M|4\pi\rho\partial_x(r^2u)|+|\partial_x(4\pi\rho\partial_x(r^2u))|dx\right)^2d\tau\\
            \le&C\int_0^T\left(\int_{x_1}^M|4\pi\rho\partial_x(r^2u)|+|\frac{1}{4\pi\pvcons r^2}(\ptu+4\pi r^2\partial_x\rho^{\gamma}+\frac{x}{r^2})|dx\right)^2d\tau\\
            \le&C\int_0^T\int_{x_1}^M\rho r^4(\partial_xu)^2+\frac{u^2}{\rho r^2}+(\ptu)^2+\rho+(\partial_x\rho^{\gamma})^2dxd\tau\\
            \le& C+C\int_{x_1}^M(\partial_x\rho^{\gamma}_0)^2dx+C\int_{x_0}^M\rho_0 r^4(\partial_xu_0)^2+\frac{u_0^2}{\rho_0 r^2}dx+C\|\rho_0\|_{\Linf([x_0,M])}^{2\gamma-1}.
        \end{aligned}
    \end{equation}
    So the second term is controlled. As a result, $\int_0^T\int_{x_1}^M\rho(\partial_{\tau}\partial_x(r^2u))^2\psi dxd\tau$ is also controlled. Since
    \begin{equation}
        \begin{aligned}
        &\int_0^T\int_{x_1}^M\rho(\partial_{\tau}\partial_x(r^2u))^2\psi dxd\tau\\=&\int_0^T\int_{x_1}^M(2\sqrt{\rho}ru\pxu+\sqrt{\rho}r^2\partial_{\tau x}u+\frac{\ptu}{2\pi\sqrt{\rho} r}-\frac{u\ptau\rho}{2\pi\rho^{\frac{3}{2}}r}-\frac{u^2}{2\pi\sqrt{\rho} r^2})^2\psi dxd\tau,
        \end{aligned}
    \end{equation}
    and we have the following estimates
    \begin{equation}
        \int_0^T\int_{x_1}^M\rho r^2u^2(\pxu)^2\psi dxd\tau\le\int_0^T\|\frac{u}{r}\sqrt{\psi}\|_{\Linf([x_1,M])}^2\int_{x_1}^M\rho r^4(\pxu)^2dxd\tau\le C,
    \end{equation}
    \begin{equation}
        \int_0^T\int_{x_1}^M\frac{u^2(\ptau\rho)^2}{\rho^{3}r^2}\psi dxd\tau\le\int_0^T\|\frac{u}{r}\sqrt{\psi}\|_{\Linf([x_1,M])}^2\int_{x_1}^M\rho^{-3}(\ptau\rho)^2dxd\tau\le C,
    \end{equation}
    \begin{equation}
         \int_0^T\int_{x_1}^M\frac{u^4}{\rho r^4}\psi dxd\tau\le\int_0^T\|\frac{u}{r}\sqrt{\psi}\|_{\Linf([x_1,M])}^2\int_{x_1}^M\frac{u^2}{\rho r^2}dxd\tau\le C
    \end{equation}
    we conclude that $\int_0^T\int_{x_1}^M(\sqrt{\rho}r^2\partial_{\tau x}u+\frac{\ptu}{2\pi\sqrt{\rho}r})^2\psi dxd\tau$ is controlled. What's more,
    \begin{equation}
        \begin{aligned}
            &\int_0^T\int_{x_1}^M(\sqrt{\rho}r^2\partial_{\tau x}u+\frac{\ptu}{2\pi\sqrt{\rho}r})^2\psi dxd\tau\\=&\int_0^T\int_{x_1}^M\rho r^4(\partial_{\tau x}u)^2\psi+\frac{(\ptu)^2}{4\pi^2\rho r^2}\psi dxd\tau+\int_0^T\int_{x_1}^M\frac{r}{2\pi}\px((\ptu)^2)\psi dxd\tau\\
            =&\int_0^T\int_{x_1}^M\rho r^4(\partial_{\tau x}u)^2\psi+\frac{(\ptu)^2}{8\pi^2\rho r^2}\psi dxd\tau+\int_0^T\frac{a(t)}{2\pi}(a''(t))^2d\tau-\int_0^T\int_{x_1}^M\frac{1}{2\pi}r(\ptu)^2\psi' dxd\tau,
        \end{aligned}
    \end{equation}
    which gives
    \begin{equation}
        \begin{aligned}
        &\int_0^T\int_{x_2}^M4\pi\rho r^4(\partial_{\tau x}u)^2+\frac{(\ptu)^2}{\rho r^2}dxd\tau+\int_0^Ta(t)(a''(t))^2d\tau\\
            \le& C\int_{x_1}^M(\partial_x(\rho_0\partial_x(r^2u_0)))^2dx+C\int_{x_1}^M(\partial_x\rho^q_0)^2dx+C\int_{x_0}^M\rho_0 r^4(\partial_xu_0)^2+\frac{u_0^2}{\rho_0 r^2}dx+C\|\rho_0\|_{\Linf([x_0,M])}^{2\gamma-1}.
        \end{aligned}
    \end{equation}
\end{proof}
\begin{rmk}\label{HigherEnEst5}
    It follows from Lemma \ref{rhobound} that $\rho$ has a strictly positive lower bound on $[x_0,M-\delta]$ for any $\delta>0$ small enough. As a result, from Lemma \ref{HigherEnergyEst2}-\ref{HigherEnergyEst4}, we have the following interior regularities

    I. If assuming the conditions in (\ref{conditionsI}), which implies $\|\rho_0\|_{H^1([x_0,M-\delta])},\ \|u_0\|_{H^1([x_0,M-\delta])}<\infty$, then
    \begin{equation}
    \medmath{
        \begin{aligned}
            &\sup_{0\le\tau\le T}\left(\|\px u\|_{L^2([x_1,M-2\delta])}^2+\|\px\rho\|_{L^2([x_1,M-2\delta])}^2+\|\ptau\rho\|_{L^2([x_1,M-2\delta])}^2+\|\sigma\|_{L^2([x_1,M-2\delta])}^2\right)\\
            +&\int_0^T\|\ptu\|_{L^2([x_1,M-2\delta])}^2+\|\px^2u\|^2_{L^2([x_1,M-2\delta])}+\|\px\sigma\|_{L^2([x_1,M-2\delta])}^2+\|\partial_{\tau x}\rho\|^2_{L^2([x_1,M-2\delta])}d\tau\le C_1,
        \end{aligned}}
    \end{equation}
Where $C_1>0$ depends on $\|\rho_0\|_{H^1([x_0,M-\delta])},\ \|u_0\|_{H^1([x_0,M-\delta])}$ and the lower bound of $\rho_0$ on $[x_0,M-\delta]$.

    II. If assume further that $\|u_0\|_{H^2([x_1,M-\delta])}<\infty$, then
     \begin{equation}
    \medmath{
        \begin{aligned}
            &\sup_{0\le\tau\le T}\left(\|\ptau u\|_{L^2([x_2,M-2\delta])}^2+\|\px^2u\|_{L^2([x_2,M-2\delta])}^2+\|\partial_x\sigma\|_{L^2([x_2,M-2\delta])}^2\right)\\
            +&\int_0^T\|\ptau\sigma\|_{L^2([x_2,M-2\delta])}^2+\|\partial_{xx}\sigma\|_{L^2([x_2,M-2\delta])}^2+\|\partial_{\tau x}u\|^2_{L^2([x_2,M-2\delta])}d\tau\le C_2,
        \end{aligned}}
    \end{equation}
    Where $C_2>0$ depends on $\|\rho_0\|_{H^1([x_0,M-\delta])},\ \|u_0\|_{H^2([x_1,M-\delta])}$ and the lower bound of $\rho_0$ on $[x_0,M-\delta]$.
\end{rmk}
\subsection{Compactness arguments}
Suppose $(\rho_0,u_0)$ are initial data satisfying (\ref{cond1}) and
\begin{equation}
    \begin{aligned}
    \rho_0\ge0,\ \rho_0\in L^1([0,a_0),&r^2dr)\cap L^{\gamma}([0,a_0),r^2dr),\ \\(\rho_0)^k\in H^1([0,a_0),r^2dr)\ (0<k&\le\gamma-\frac{1}{2}),\ u_0\in H^1([0,a_0),r^2dr),\\
    \rho_0(r)>0\text{ for }r\in(0,a_0),&\ \rho_0(a_0)=0,\ \partial_ru_0+\frac{2u_0(a_0)}{a_0}=0.
    \end{aligned}
\end{equation}

We mollify the initial data using a similar method as in \cite{CHWY} to get $(\rhox_0,\ux_0)$ such that $\rhox_0,\ \ux_0$ are smooth on $[\xi,a_0]$, 
\begin{equation}\label{mollification of ID}
    \begin{aligned}
        \inf_{r\in[\xi,a_0]}\rhox_0(r)&>0,\ \ux_0(\xi)=0,\\
        (\rhox_0)^{\gamma}-(\eta+\frac{4}{3}\varepsilon)&(\partial_r\ux_0+\frac{2\ux_0}{r})(a_0)=0,\\
        (\rhox_0,\ux_0)\rightarrow(\rho_0,u_0)\text{ in } H^1(&[0,a_0),r^2dr),\ \rhox_0(r)\rightarrow\rho_0(r)\ (\xi\rightarrow0)\\
        \int_{\xi}^{a_0}\rhox_0 r^2dr&=\int_0^{a_0}\rho_0 r^2dr.
    \end{aligned}
\end{equation}
Then the problem (\ref{apprE1})-(\ref{apprE4}) with initial data $(\rhox_0,\ux_0)$ has a global weak solution in the sense of distribution. In fact, we can apply a finite difference argument to obtain the existence of a local solution $(\rhox,\ux)$ for some short time $T^*>0$. By a continuity argument, we can extend the solution to any finite time $T>0$, as in \cite{CK,G}. Under Lagrangian coordinates, the following regularities hold for a solution $(\rhox(x,\tau),\ux(x,\tau))$:
\begin{equation}\label{ReguApprSoln}
    \begin{aligned}
        &C^{-1}\rhox_0(x)\le\rhox(\tau,x)\le C\rhox_0(x),\\
        &|\rhox(\cdot,x)-\rhox(\cdot,y)|\le C|x-y|^{\frac{1}{2}},\ |\rhox(s,\cdot)-\rhox(t,\cdot)|\le C|s-t|^{\frac{1}{2}},\\
        &\rhox\in C([0,T];L^2[0,M])\cap L^{\infty}(0,T;H^1([0,M])),\\
        &\ux\in L^{\infty}\cap H^1_{loc}([0,M)\times[0,T]),\\
        &\sqrt{\rhox}\px(\rhox)^{\gamma-1},\ptu^{\xi},\sqrt{\rhox}\pxu^{\xi},\sqrt{\rhox}\sigma^{\xi}\in L^2(0,T;L^2([0,M])),\\
        &\sqrt{\rhox}\partial_x((r^{\xi})^2 \ux)\in L^1(0,T;L^{\infty}([0,M]))\cap L^{\infty}(0,T; L^2([0,M])),
    \end{aligned}
\end{equation}
where the norms of $\rhox$ and $\ux$ in the functional spaces in (\ref{ReguApprSoln}) and $C>0$ may not be uniformly bounded in $\xi$. But the a priori estimates in Lemma \ref{rhoIntegrability}-\ref{HigherEnergyEst3} hold for the solution $(\rhox,\ux)$, and are uniform in $\xi$, which will allow us to take $\xi\rightarrow0$ in the following subsections.

Extend $(\rhox,\ux)$ by setting $(\rhox(r,t),\ux(r,t))=(0,0)$ when $0\le r\le\xi$, and we still denote the resulting functions by $(\rhox,\ux)$.

\subsubsection{Convergence near the free boundary}\label{Convergence near the free boundary}
As for the region near the free boundary, since the solutions enjoy better regularities, we will have a stronger sense of convergence via Arzela-Ascoli's theorem.

Fix $0<x_0<x_1<x_2$ as in Lemma \ref{HigherEnergyEst2}-\ref{HigherEnergyEst4}. From (\ref{BoundaryBdd}), we know $\ax(t)$ is uniformly bounded in $H^1([0,T])$. Since $ H^1([0,T])\hookrightarrow\hookrightarrow C^{0,\alpha}([0,T])$ for $0<\alpha<\frac{1}{2}$, there exists a sequence of $\{\xi_j\}$ with $\xi_j\rightarrow0$ such that
\begin{equation}
    \axj(\cdot)\rightarrow a(\cdot)\text{ in }C^{0,\alpha}([0,T]).
\end{equation}
As for $\{r^{\xi_j}_{x_1}\}$, from (\ref{BoundaryBdd}), Lemma \ref{rhobound} and Lemma \ref{HigherEnergyEst2}, we have
\begin{equation}\label{EstAwayFromBdry}
    \int_{x_1}^{M-\delta}(\uxj)^2+(\px\uxj)^2dx+\int_0^T\int_{x_1}^{M-\delta}(\ptau\uxj)^2dxd\tau\le C
    \end{equation}
    for any $\delta>0$ that is small enough, and the constant $C>0$ also depends on the lower bound of $\rho_0$ on $[x_0,M-\delta]$, which is finite. Then $\uxj(x_1,\tau)\in L^2([0,T])$ is well defined. As a result, $r^{\xi_j}(x_1,\tau)\in H^1([0,T])\hookrightarrow\hookrightarrow C^{0,\alpha}([0,T])\ (0\le\alpha<\frac{1}{2})$, which implies there exists a subsequence
\begin{equation}
    r^{\xi_j}_{x_1}(\cdot)\rightarrow r_{x_1,b}(\cdot)\text{ in }C^{\alpha}([0,T]).
\end{equation}

Next, we focus on the convergence of $(\rhoxj,\uxj)$ on the region $[x_1,M]\times[0,T]$. From Lemma \ref{HigherEnergyEst2}-\ref{HigherEnergyEst3} and (\ref{EstAwayFromBdry}), we have, up to a subsequence,
\begin{equation}\label{ConvInLag}
    \begin{aligned}
        &\uxj\rightharpoonup u_b\text{ weak* in }\Linf(0,T;W^{1,1}([x_1,M])),\\
         &\uxj\rightharpoonup u_b\text{ weak* in }\Linf(0,T;H^1([x_1,M-\delta]))\text{ for any $\delta>0$ small enough},\\
         &\ptau\uxj\rightharpoonup \ptau u_b\text{ weakly in }L^2(0,T;L^2([x_1,M])),\\
        &\rhoxj\rightharpoonup\rho_b\text{ weak* in }\Linf(0,T;H^1([x_1,M])),\\
        &\ptau\rhoxj\rightharpoonup\ptau\rho_b\text{ weakly in }L^2(0,T;L^2([x_1,M])).
    \end{aligned}
\end{equation}

The following Lemmas show the strong convergence of $(\rhoxj,\uxj)$ on $[x_1,M]$.
\begin{lem}\label{StrConvOfrho}
    There exists a subsequence of $\{\rhoxj\}$, still denoted by $\{\rhoxj\}$, such that $\rhoxj\rightarrow\rho_b$ in $C([x_1,M]\times[0,T])$.
\end{lem}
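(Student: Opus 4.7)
The plan is to extract the $C([x_1,M]\times[0,T])$ limit by a standard Arzela--Ascoli argument (an Aubin--Lions compactness argument in disguise), where the required uniform equicontinuity in $\xi_j$ is furnished by the $\xi$-independent estimates established in Subsection \ref{Estimates in Lagrangian coordinates}.

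First, applying Lemma \ref{HigherEnergyEst3} with the admissible choice $q=1\in(\tfrac12,\gamma]$ yields
$$\sup_{\tau\in[0,T]}\int_{x_1}^M(\px\rho^{\xi_j})^2\,dx\le C,$$
with $C$ independent of $\xi_j$, so the one-dimensional Sobolev embedding $H^1\hookrightarrow C^{0,1/2}$ provides uniform $L^\infty$ boundedness together with uniform $\tfrac12$-Hölder continuity in $x$:
$$|\rho^{\xi_j}(x,\tau)-\rho^{\xi_j}(y,\tau)|\le C|x-y|^{1/2}.$$

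Second, combining (\ref{boundRho}) with the pointwise upper bound $\rho^{\xi_j}\le C$ inherited in Lagrangian coordinates from Lemma \ref{rhobound} (and the uniform $L^\infty$ control of the mollified initial data $\rho_0^{\xi_j}$ supplied by (\ref{mollification of ID})) gives $\partial_\tau\rho^{\xi_j}\in L^2(0,T;L^2([x_1,M]))$ uniformly in $\xi_j$. By Cauchy--Schwarz in $\tau$, $\|\rho^{\xi_j}(\cdot,\tau_1)-\rho^{\xi_j}(\cdot,\tau_2)\|_{L^2([x_1,M])}\le C|\tau_1-\tau_2|^{1/2}$. Interpolating against the uniform $H^1$ bound via the one-dimensional inequality $\|f\|_{L^\infty}\le C\|f\|_{L^2}^{1/2}\|f\|_{H^1}^{1/2}$ produces the temporal modulus
$$\sup_{x\in[x_1,M]}|\rho^{\xi_j}(x,\tau_1)-\rho^{\xi_j}(x,\tau_2)|\le C|\tau_1-\tau_2|^{1/4}.$$

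Combining the two moduli with the uniform $L^\infty$ bound, the family $\{\rho^{\xi_j}\}$ is uniformly bounded and uniformly equicontinuous on the compact set $[x_1,M]\times[0,T]$. Arzela--Ascoli yields a subsequence converging uniformly to some continuous function, which must coincide with $\rho_b$ by the weak* convergence recorded in (\ref{ConvInLag}). The only subtlety is checking that every constant above is genuinely independent of $\xi_j$; this reduces to confirming that the mollified initial data $(\rho_0^{\xi_j},u_0^{\xi_j})$ are uniformly bounded in the relevant norms, which holds by the construction (\ref{mollification of ID}), so no fresh estimate beyond those of Subsection \ref{Estimates in Lagrangian coordinates} is required.
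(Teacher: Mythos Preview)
Your proof is correct and follows essentially the same Arzela--Ascoli strategy as the paper: spatial equicontinuity from the uniform $L^\infty(0,T;H^1([x_1,M]))$ bound of Lemma \ref{HigherEnergyEst3}, temporal equicontinuity from the uniform $L^2(0,T;L^2([x_1,M]))$ bound on $\partial_\tau\rho^{\xi_j}$, and identification of the limit via (\ref{ConvInLag}). The only cosmetic difference is that you extract an explicit $|\tau_1-\tau_2|^{1/4}$ modulus via the Gagliardo--Nirenberg interpolation $\|f\|_{L^\infty}\le C\|f\|_{L^2}^{1/2}\|f\|_{H^1}^{1/2}$, whereas the paper uses an Ehrling-type inequality $\|f\|_{L^\infty}\le\delta\|f\|_{H^1}+C_\delta\|f\|_{L^2}$ to obtain bare equicontinuity; both are equally valid routes to Arzela--Ascoli.
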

\begin{proof}
    From (\ref{ConvInLag}), $\{\rhoxj\}$ is bounded in $H^{1}(0,T;L^2([x_1,M]))\cap L^{\infty}(0,T;H^1([x_1,M]))$. By Sobolev embedding, $\rhoxj\in C(0,T;L^2([x_1,M]))\cap L^{\infty}(0,T;C^{\frac{1}{2}}([x_1,M]))$. Thus we have $\{\rhoxj\}$ is uniformly bounded and
    \begin{equation}
        |\rhoxj(x',\tau)-\rhoxj(x'',\tau)|\le C|x'-x''|^{\frac{1}{2}}
    \end{equation}
    for any $\tau\in[0,T]$.
    
    On the other hand, $H^1([x_1,M])\hookrightarrow\hookrightarrow C^{0,\alpha}([x_1,M])\hookrightarrow L^2([x_1,M])$ for $0<\alpha<\frac{1}{2}$, for any $\delta>0$, there exists $C_\delta>0$ such that
    \begin{equation}
    \begin{aligned}
        &\|\rhoxj(\cdot,\tau_1)-\rhoxj(\cdot,\tau_2)\|_{\Linf([x_1,M])}\\
        \le&\delta\|\rhoxj(\cdot,\tau_1)-\rhoxj(\cdot,\tau_2)\|_{H^1([x_1,M])}+C_{\delta}\|\rhoxj(\cdot,\tau_1)-\rhoxj(\cdot,\tau_2)\|_{L^2([x_1,M])}\\
        \le&2\delta\|\rhoxj\|_{\Linf(0,T;H^1([x_1,M]))}+C_{\delta}\|\partial_{\tau}\rhoxj\|_{L^2([0,T]\times[x_1,M]))}\cdot|\tau_1-\tau_2|^{\frac{1}{2}}\\
        \le&C\delta+C_{\delta}\cdot|\tau_1-\tau_2|^{\frac{1}{2}}.
    \end{aligned}
    \end{equation}
    So $\{\rhoxj\}$ is equicontinuous. By Arzela-Ascoli's theorem, there exists a subsequence of $\{\rhoxj\}$ convergent to $\rho_b$ in $C([x_1,M]\times[0,T])$.
\end{proof}
\begin{lem}\label{StrConvOfu}
    There exists a subsequence of $\{\uxj\}$, still denoted by $\{\uxj\}$, such that $\uxj\rightarrow u_b$ in $C(0,T;L^p([x_1,M]))\ \text{, where } 1\le p<\infty$.
\end{lem}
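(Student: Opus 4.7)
My plan is to apply an Arzela--Ascoli type argument in $C([0,T]; L^2([x_1,M]))$ to extract a strongly convergent subsequence, then upgrade the convergence to $C([0,T]; L^p([x_1,M]))$ for any $1 \le p < \infty$ by interpolation with a uniform $\Linf$ bound. The strategy parallels Lemma \ref{StrConvOfrho}, but the spatial regularity of $\uxj$ available up to the free boundary is weaker (only $W^{1,1}$ rather than $H^1$), so the conclusion is convergence in $L^p$ norms rather than uniform.

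From (\ref{ConvInLag}) and (\ref{EstAwayFromBdry}), $\{\uxj\}$ is uniformly bounded in $\Linf(0,T; W^{1,1}([x_1,M]))$ and $\{\ptau\uxj\}$ is uniformly bounded in $L^2(0,T; L^2([x_1,M]))$. Since in one dimension $W^{1,1}([x_1,M]) \hookrightarrow \Linf([x_1,M])$ continuously, I also obtain a uniform $\Linf$ bound for $\{\uxj\}$ on $[x_1,M]\times[0,T]$. The 1D Rellich--Kondrachov theorem provides the compact embedding $W^{1,1}([x_1,M]) \hookrightarrow\hookrightarrow L^2([x_1,M])$, so $\{\uxj(\cdot,\tau)\}$ is relatively compact in $L^2([x_1,M])$ for each $\tau$. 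For time equicontinuity, Cauchy--Schwarz and Fubini give
\begin{equation*}
\|\uxj(\tau_1)-\uxj(\tau_2)\|_{L^2([x_1,M])}^2 \le |\tau_1-\tau_2|\int_{x_1}^M\int_{\tau_1}^{\tau_2}(\ptau\uxj)^2 d\tau dx \le C|\tau_1-\tau_2|
\end{equation*}
uniformly in $j$. Arzela--Ascoli then yields a subsequence converging in $C([0,T]; L^2([x_1,M]))$ to some limit, which must coincide with $u_b$ by the weak$^*$ convergence in (\ref{ConvInLag}).

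To reach general $p\in[1,\infty)$, for $p\ge 2$ I would interpolate
\begin{equation*}
\|\uxj(\tau)-u_b(\tau)\|_{L^p}\le\|\uxj(\tau)-u_b(\tau)\|_{\Linf}^{1-2/p}\|\uxj(\tau)-u_b(\tau)\|_{L^2}^{2/p},
\end{equation*}
using the uniform $\Linf$ bound on $\uxj$ (and hence on $u_b$) to deduce $C([0,T]; L^p)$ convergence from the $C([0,T]; L^2)$ convergence; for $1\le p<2$ the continuous inclusion $L^2([x_1,M])\hookrightarrow L^p([x_1,M])$ on the bounded interval suffices. The main obstacle is the loss of uniform $H^1$ regularity up to $x=M$ caused by the degeneracy of $\rho$ at the vacuum interface: this is what forces the use of $W^{1,1}$ in place of $H^1$ and prevents an upgrade to the stronger $C([0,T]; C([x_1,M]))$ convergence obtained for $\rhoxj$ in Lemma \ref{StrConvOfrho}.
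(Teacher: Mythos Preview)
Your proposal is correct and follows essentially the same Arzela--Ascoli strategy as the paper: uniform $\Linf(0,T;W^{1,1}([x_1,M]))$ bounds for spatial compactness and the $L^2$ bound on $\ptau\uxj$ for time equicontinuity. The only cosmetic difference is that the paper obtains equicontinuity in $L^p$ directly via an Ehrling-type inequality $\|\cdot\|_{L^p}\le\delta\|\cdot\|_{W^{1,1}}+C_\delta\|\cdot\|_{L^2}$, whereas you first prove equicontinuity in $L^2$ and then interpolate with the uniform $\Linf$ bound; both routes are valid and rest on the same estimates.
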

\begin{proof}
    It follows from Lemma \ref{HigherEnergyEst2} that $\{\uxj\}$ is bounded in $\Linf(0,T;W^{1,1}([x_1,M]))$ and 
    \begin{equation}
        \int_0^T\int_{x_1}^M(\uxj)^2+(\ptau\uxj)^2dxd\tau\le C,
    \end{equation}
    for any $\delta>0$, there exists $C_{\delta}>0$ such that
    \begin{equation}
        \begin{aligned}
            &\|\uxj(\cdot,\tau_1)-\uxj(\cdot,\tau_2)\|_{L^p([x_1,M])}\\
            \le&\delta\|\uxj(\cdot,\tau_1)-\uxj(\cdot,\tau_2)\|_{W^{1,1}([x_1,M])}+C_{\delta}\|\uxj(\cdot,\tau_1)-\uxj(\cdot,\tau_2)\|_{L^2([x_1,M])}\\
            \le&2\delta\|\uxj\|_{\Linf(0,T;W^{1,1}([x_1,M]))}+C_{\delta}\|\ptau\uxj\|_{L^2([0,T]\times[x_1,M])}\cdot|\tau_1-\tau_2|^{\frac{1}{2}}\\
            \le&C\delta+C_{\delta}\cdot|\tau_1-\tau_2|^{\frac{1}{2}},
        \end{aligned}
    \end{equation}
    where $1\le p<\infty$. Thus $\{\uxj\}$ is equicontinuous in $\tau$. On the other hand, by compact embedding, for any $\tau\in[0,T]$, $\{\uxj(\cdot,\tau)\}$ has a convergent subsequence in $L^p([x_1,M])$. By Arzela-Ascoli's theorem, there exists a subsequence of $\{\uxj\}$ convergent to $u_b$ in $C([0,T];L^p([x_1,M]))$.
\end{proof}
\begin{rmk}\label{StrConvOfu: Int}
   Following a similar argument as in Lemma \ref{StrConvOfrho}, we will have
    \begin{equation}\label{UniformConvOfU}
        \uxj\rightarrow u_b\text{ in }C([x_1,M-\delta]\times[0,T]).
    \end{equation}
\end{rmk}

Next, we derive the convergence of $r^{\xi_j}(x,\tau)$. From the relation $\px r^{\xi_j}=\frac{1}{4\pi\rhoxj(r^{\xi_j})^2}$, we have
\begin{equation}
    r^{\xi_j}(x,0)=\left( (r^{\xi_j})^3(x_1,0)+3\int_{x_1}^x\frac{dy}{4\pi\rhoxj(y,0)}\right)^{\frac{1}{3}}
\end{equation}
for $\tau=0$. So $r^{\xi_j}(\cdot,0)$ is uniformly bounded in $ W^{1,1}([x_1,M])\cap H^1([x_1,M-\delta])$. As a result, up to a subsequence,
\begin{equation}
\begin{aligned}
    &r^{\xi_j}(x,0)\rightarrow r_b(x,0)\text{ pointwisely},\\
    &r^{\xi_j}(x,0)\rightarrow r_b(x,0)\text{ in }C^{0,\alpha}([x_1,M-\delta])\ (0\le\alpha<\frac{1}{2})\text{ for any small enough }\delta>0.
\end{aligned}
\end{equation}
Define $r_b(x,\tau)$ by 
\begin{equation}
    r_b(x,\tau)=r_b(x,0)+\int_0^\tau u_b(x,s)ds,
\end{equation}
we have $r^{\xi_j}(x,\tau)\rightarrow r_b(x,\tau)$ in $C^1([0,T];L^p([x_1,M]))$. Also, by (\ref{UniformConvOfU}), if we fix any $x<M$, we have $r_b(x,\tau)\in C^1([0,T])$. What's more, it follows from (\ref{ConvInLag}) that
\begin{equation}
\begin{aligned}
    &\px r_b\in L^{\infty}(0,T;L^1([x_1,M]));\\
    &\partial_{\tau x} r_b\in L^{\infty}(0,T;L^1([x_1,M]))\cap L^{\infty}(0,T;L^2_{loc}([x_1,M)));\\
    &\partial_{\tau\tau} r_b\in L^{2}(0,T;L^2([x_1,M])).
\end{aligned}
\end{equation}

As for the convergence of $\sigma^{\xi_j}$, it follows from Lemma \ref{HigherEnergyEst2} and Remark \ref{dsigmabound} that
    \begin{equation}
        \int_0^T\int_{x_1}^M(\sigma^{\xi_j})^2+(\partial_x\sigma^{\xi_j})^2dxd\tau\le C,
    \end{equation}
    which gives
    \begin{equation}
        \begin{aligned}
            \sigma^{\xi_j}\rightharpoonup\sigma_b\text{ weakly in } L^2(0,T;H^1([x_1,M])),
        \end{aligned}
    \end{equation}
where $\sigma_b=\rho_b^{\gamma}-\pvcons(4\pi\rho_b r_b^2\pxu_b+\frac{2u_b}{r_b})$.

\subsubsection{Convergence near the origin}
\label{Convergence near the origin}
As for the region near the origin, the solutions have lower regularities. We establish the compactness argument by adapting the idea from \cite{JZ}. The difference is that the spatial domain changes in different time $t\in[0,T]$.

Since $r^{\xi_j}_{x_1}(\cdot)\rightarrow r_{x_1,b}(\cdot)$ and $a^{\xi_j}(\cdot)\rightarrow a(\cdot)$ in $C^{\alpha}([0,T])$ for $0<\alpha<\frac{1}{2}$, we can choose a trajectory path $r_{in}(t)$ such that
\begin{equation*}
    r^{\xi_j}_{x_1}(t)<r_{in}(t)<a^{\xi_j}(t),\ r_{x_1,b}(t)<r_{in}(t)<a(t)
\end{equation*}
for all $\xi_j$ small enough (i.e., for all $j>N$, where $N$ is a large enough number). Then it follows from the basic energy estimate (\ref{BasicEnergyEst}) and Lemma \ref{rhobound} that, up to a subsequence,
\begin{equation}\label{weakstarlim}
    \begin{aligned}
        &\rhoxj\rightharpoonup\rho_{in}\text{ weak* in }\Linf(0,T;L^{\gamma}([0,r_{in}(t)),r^2dr)),\\
        &\rhoxj\rightharpoonup\rho_{in}\text{ weakly in }L^{2\gamma}(0,T;L^{2\gamma}_{loc}((0,r_{in}(t)),r^2dr)),\\
        &\uxj\rightharpoonup\uin\text{ weakly in }L^2(0,T;H^1([0,r_{in}(t)),r^2dr)).
    \end{aligned}
\end{equation}
By lower semicontinuity, 
\begin{equation}
    \begin{aligned}
        &\rho_{in}\in\Linf(0,T;L^{\gamma}([0,r_{in}(t)),r^2dr))\cap L^{2\gamma}(0,T;L^{2\gamma}_{loc}((0,r_{in}(t)),r^2dr)),\\
        &\uin\in L^2(0,T;H^1([0,r_{in}(t)),r^2dr)),\\
        &\frac{\uin}{r}\in L^2(0,T;L^2([0,r_{in}(t)),r^2dr)).
    \end{aligned}
\end{equation}

We need to show that $(\rhoin,\uin)$ satisfies equations (\ref{nsp3}) in the weak sense on the set $\Omega_{in}:=\{(r,t):0\le t\le T,\ 0\le r\le\rin(t)\}$. We denote the weak limit of $f(\rhoxj)$ by $\overline{f(\rho)}$ as $\xi_j\rightarrow0$.

In the proofs of the following several lemmas, we will apply a generalized version of the Aubin-Lions lemma, that is, Lemma \ref{App1} in the Appendix. For any function $f(r,t)$ defined on the domain $[0,\rin(t))\times[0,T]$, we use the flow map $$\varphi_{-t}: f(r,t)\mapsto f(\frac{y}{r_{in,0}}\rin(t),t).$$
By the fact that $\rin(t)\in C^1([0,T])$ and $\rin(t)$ having a strictly positive lower bound, the flow map $\varphi_t$ satisfies the conditions in Lemma \ref{App1} for any Sobolev space over $[0,\rin(t))$. By direct computation,
$$\Dt f(r,t)=\pt f(r,t)+\frac{u(\rin(t),t)}{\rin(t)}r\pr f(r,t).$$

The main difficulty to prove $(\rhoxj,\uxj)$ indeed converges to a solution on $[0,a(t))\times[0,T]$ is the pressure term. Convexity only implies $\overline{\rho^{\theta}}\le\rhoin^{\theta}$ for $0<\theta<1$ and $\overline{\rho^{\theta}}\ge\rhoin^{\theta}$ for $\theta>1$. We will prove $\overline{\rho^{\theta}}=\rhoin^{\theta}$ for $0<\theta\le1$. The first step to approach this conclusion is the following lemma.
\begin{lem}\label{wkconvergenceLemma1}
    For any $0<\theta<\gamma$, we have
    \begin{equation}
        \overline{\rho^{\theta+\gamma}}-\pvcons\overline{\rho^{\theta}\partial_ru}=\overline{\rho^{\theta}}\overline{\rho^{\gamma}}-\pvcons\overline{\rho^{\theta}}\partial_r\uin.
    \end{equation}
\end{lem}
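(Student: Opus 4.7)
This is a version of the effective viscous flux identity of Lions--Feireisl, adapted to the spherically symmetric Navier--Stokes--Poisson setting with free boundary. Recalling $\sigma = \rho^{\gamma} - \pvcons\bigl(\pr u + \frac{2u}{r}\bigr)$, I first observe that $\uxj/r$ converges strongly in $L^2_{loc}$ on any compact subset of $(0,\rin(t))\times(0,T)$: this follows from the interior $H^1$-bound of $\uxj$ coming from the basic energy inequality \eqref{EnInq} together with an Aubin--Lions argument carried out on the flow-map form of the domain (Lemma \ref{App1}, already invoked in the paper). Hence $\overline{\rho^{\theta}\cdot 2u/r}=\overline{\rho^{\theta}}\cdot 2\uin/r$, so the claim reduces to the single effective viscous flux identity
$$\overline{\rho^{\theta}\sigma}=\overline{\rho^{\theta}}\cdot\overline{\sigma}$$
on compact subsets of $\{(r,t):0<r<\rin(t),\,0<t<T\}$.

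\textbf{Test-function strategy.} Fix $\xi_0,\delta>0$ and a smooth space-time cutoff $\zeta(\bx,t)$ supported in $\{(\bx,t):\xi_0<|\bx|<\rin(t)-\delta,\,0<t<T\}$. I would test the 3D weak momentum equation \eqref{WkSolnMo_3D} for $(\rhoxj,\uxj)$ with the Riesz-type vector field
$$\boldsymbol{\psi}^{\xi_j}(\bx,t):=\zeta(\bx,t)\,\nabla\Delta^{-1}\!\bigl((\rhoxj)^{\theta}\,\zeta(\cdot,t)\bigr)(\bx),$$
whose divergence equals $\zeta^{2}(\rhoxj)^{\theta}$ modulo a commutator supported where $\nabla\zeta\neq 0$. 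The ``pressure minus viscosity'' contribution then isolates $\iint \zeta^{2}\sigma^{\xi_j}(\rhoxj)^{\theta}\,\dbx\,dt$. Repeat the computation on the limit momentum equation, which $(\rhoin,\uin,\Phi_{in})$ satisfies in the distributional sense on $\{|\bx|<\rin(t)\}$, now with $\boldsymbol{\psi}=\zeta\,\nabla\Delta^{-1}(\overline{\rho^{\theta}}\,\zeta)$, producing the counterpart $\iint\zeta^{2}\,\overline{\sigma}\cdot\overline{\rho^{\theta}}\,\dbx\,dt$. Passing $\xi_j\to 0$ in the first identity and subtracting, provided the non-pressure contributions converge to the same limits on both sides, gives
$$\iint \zeta^{2}\bigl(\overline{\rho^{\theta}\sigma}-\overline{\rho^{\theta}}\cdot\overline{\sigma}\bigr)\,\dbx\,dt=0,$$
and since $\zeta$ is arbitrary, the identity holds pointwise a.e.

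\textbf{Main obstacle.} The decisive cancellation is for the convective term $\iint\rhoxj\,\uxj\otimes\uxj:\nabla\boldsymbol{\psi}^{\xi_j}\,\dbx\,dt$. I would handle it via the Div--Curl/compensated-compactness identity pairing the fact that $\mathrm{curl}(\rhoxj\uxj)$ has controlled time regularity through the mass equation against the gradient structure of $\nabla\Delta^{-1}((\rhoxj)^{\theta}\zeta)$, exactly as in the Jiang--Zhang argument \cite{JZ} on $\mathbb{R}^{3}$. The ingredients needed are already in hand: strong $L^{2}_{loc}$-convergence of $\uxj$ in the interior from the weak-$\star$ limits \eqref{weakstarlim}, the higher integrability $\rhoxj\in L^{2\gamma}_{loc}$ from Lemma \ref{rhoIntegrability}, and the uniform pointwise bounds of Lemma \ref{rhobound}. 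The gravitational term $\rhoxj\nabla\Phi^{\xi_j}\cdot\boldsymbol{\psi}^{\xi_j}$ is benign: $\Phi^{\xi_j}$ converges strongly on compact sets by elliptic regularity and the uniform $L^{1}\cap L^{\gamma}$ control on $\rhoxj$. The commutator remainders from $\nabla\zeta$ are again controlled by Lemma \ref{rhoIntegrability}. The delicate point is ensuring that the cutoff region stays strictly inside the common support $\{|\bx|<\rin(t)\}$ for all large $j$, which is guaranteed by the $C^{\alpha}$-convergence of the particle paths $r^{\xi_j}_{x_1}\to r_{x_1,b}$ and $\axj\to a$ already established in Subsection \ref{Convergence near the free boundary}.
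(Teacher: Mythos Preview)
Your approach is the classical 3D Lions--Feireisl effective viscous flux route, carried out with the Riesz-type test function $\zeta\,\nabla\Delta^{-1}((\rho^{\xi_j})^{\theta}\zeta)$. The paper takes a closely related but more direct path: it works entirely in the radial variable, testing the momentum equation with the ``1D Riesz operator'' $\int_r^{a(t)}(\cdot)\,\phi\,dy$ (see \eqref{DtIntrhou}--\eqref{rhoGammaTheta}), which is exactly the radial reduction of your 3D construction. The paper then obtains strong convergence of the integrated quantity $\int_r^{a(t)}\rho^{\xi_j}u^{\xi_j}\phi\,dy$ via Aubin--Lions (see \eqref{Strong Cov of IntRhoU}) and handles every remaining product with the compensated-compactness Lemma~\ref{App2}, never invoking strong convergence of $u^{\xi_j}$ itself. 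The radial approach avoids Riesz commutator estimates and keeps all computations inside the already-established uniform bounds; your 3D approach is closer to the whole-space literature and would transplant more readily to non-radial perturbations.

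One point in your proposal deserves a flag. Your opening reduction asserts that $u^{\xi_j}/r$ converges \emph{strongly} in $L^2_{loc}$ via Aubin--Lions. That would require a uniform-in-$j$ bound on $\partial_t u^{\xi_j}$ (or $\mathcal{D}_t u^{\xi_j}$) on Eulerian compact sets, which in turn needs $\rho^{\xi_j}$ bounded below there. But Lemma~\ref{rhobound} gives lower bounds only along particle paths, with constants depending on the Lagrangian coordinate $x$; on a fixed Eulerian annulus $[s_1,s_2]$ the corresponding Lagrangian coordinates can drift toward $0$ at some times, and the paper's estimates give no uniform lower bound on $\rho^{\xi_j}$ there. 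Fortunately this shortcut is unnecessary: the identity $\overline{\rho^{\theta}\cdot 2u/r}=\overline{\rho^{\theta}}\cdot 2u_{in}/r$ follows from the same div--curl mechanism (the paper records it in \eqref{ConvOfEverything2} via Lemma~\ref{App2}, using only that $\Dt(\rho^{\xi_j})^{\theta}$ is bounded in a negative space and $u^{\xi_j}/r$ satisfies the translation condition from its $H^1$ bound), and is in any case subsumed in your full 3D argument if you refrain from splitting off the $2u/r$ term in advance.
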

\begin{proof}
From the mass equation in (\ref{nsp3}), we get
    \begin{equation}\label{rhotheta}
        \partial_t(\rhoxj)^{\theta}+\partial_r(\uxj(\rhoxj)^{\theta})+\frac{2\theta}{r}\uxj\cdot(\rhoxj)^{\theta}=(1-\theta)(\rhoxj)^{\theta}\partial_r\uxj,
    \end{equation}
where $0<\theta\le\gamma$. Multiply the momentum equation in (\ref{nsp3}) by $\phi\in C_c((0,\rin(t))\times[0,T])$, integrate it from $r$ to $a(t)$, we get
\begin{equation}\label{DtIntrhou}
    \begin{aligned}
        &\pt\int_r^{a(t)}\rhoxj \uxj\phi dy-\int_r^{a(t)}\rhoxj \uxj\pt\phi dy+\int_r^{a(t)}\frac{2}{y}\rhoxj (\uxj)^2\phi dy-\int_r^{a(t)}(\rhoxj (\uxj)^2+(\rhoxj)^{\gamma})\partial_y\phi dy\\
        &+\int_r^{a(t)}\pvcons(\partial_y \uxj+\frac{2\uxj}{y})\partial_y\phi dy+\int_r^{a(t)}\frac{4\pi\rhoxj}{y^2}\phi\int_0^y\rhoxj(s,t)s^2dsdy\\
        =&\rhoxj (\uxj)^2\phi+(\rhoxj)^{\gamma}\phi-\pvcons(\pr \uxj+\frac{2\uxj}{y})\phi.
    \end{aligned}
\end{equation}
Following a similar procedure as in Lemma \ref{rhoIntegrability}, we get
\begin{equation}\label{rhoGammaTheta}
    \medmath{\begin{aligned}
        &(\rhoxj)^{\gamma+\theta}\phi-\pvcons(\partial_r\uxj+\frac{2\uxj}{r})(\rhoxj)^{\theta}\phi\\
        =&\partial_t\left((\rhoxj)^{\theta}\int_r^{a(t)}\rhoxj \uxj\phi dy\right)+\partial_r\left(\uxj(\rhoxj)^{\theta}\int_r^{a(t)}\rhoxj \uxj\phi dy\right)\\&+\left(\frac{2\theta}{r}\uxj(\rhoxj)^{\theta}+(\theta-1)(\rhoxj)^{\theta}\partial_r\uxj\right)\int_r^{a(t)}\rhoxj \uxj\phi dy\\
        &-(\rhoxj)^{\theta}\int_r^{a(t)}\rhoxj \uxj\partial_t\phi dy+(\rhoxj)^{\theta}\int_r^{a(t)}\frac{2}{y}\rhoxj (\uxj)^2\phi dy-(\rhoxj)^{\theta}\int_r^{a(t)}(\rhoxj (\uxj)^2+(\rhoxj)^{\gamma})\partial_y\phi dy\\
        &+(\rhoxj)^{\theta}\int_r^{a(t)}\pvcons(\partial_y\uxj+\frac{2\uxj}{y})\partial_y\phi dy+(\rhoxj)^{\theta}\int_r^{a(t)}\frac{4\pi\rhoxj}{y^2}\phi\int_0^r\rhoxj(s,t)s^2dsdy.
    \end{aligned}}
\end{equation}
We need to take the weak limit of each term on the right-hand side of (\ref{rhoGammaTheta}).

    We first establish the weak convergence of $\rhoxj\uxj$ and $\rhoxj(\uxj)^2$. By basic energy estimate (\ref{BasicEnergyEst}) and H\"older inequality,
    \begin{equation}
        \int_0^{a(t)}(\rhoxj\uxj)^{\frac{2\gamma}{\gamma+1}}r^2dr\le\left(\int_0^{a(t)}(\rhoxj)^{\gamma}r^2dr\right)^{\frac{1}{\gamma+1}}\left(\int_0^{a(t)}\rhoxj(\uxj)^2r^2dr \right)^{\frac{\gamma}{\gamma+1}}\le C,
    \end{equation}
    which, combining with the mass equation in (\ref{nsp3}), gives
    \begin{equation}
        \Dt\rhoxj\in\Linf(0,T;W^{-1,\frac{2\gamma}{\gamma+1}}((s_1,r_{in}(t))))
    \end{equation}
    for any $0<s_1<s_0$, where $s_0>0$ and $s_0<\min_{0\le t\le T}\{r^{\xi_j}_{x_1}(t),r_{x_j,b}(t):j\ge N\}$ for a large enough $N$. By (\ref{weakstarlim}), we have $\rhoxj\rightharpoonup\rhoin$ weakly in $L^2(0,T;L^2((s_1,r_{in}(t))))$ and $\uxj\rightharpoonup\uin$ weakly in $L^2(0,T;L^2((s_1,r_{in}(t))))$. By Lemma \ref{App2}, combining with the uniform boundedness of $\uxj$ in $\LTwoHOne$, we conclude
    \begin{equation}\label{ConvOfRhoU}
        \rhoxj\uxj\rightharpoonup\rhoin\uin\text{ in }D'((s_1,\rin(t))\times(0,T)).
    \end{equation}
Since $\rhoxj\uxj\in L^2(0,T;L^{\gamma}((s_1,r_{in}(t))))$, the convergence holds weakly in $L^2(0,T;L^{\gamma}((s_1,r_{in}(t))))$. Similarly, from the momentum equation in (\ref{nsp3}), we get
\begin{equation}
\begin{aligned}
    \Dt(\rhoxj\uxj)\in&L^2(0,T;W^{-1,2}(((s_1,r_{in}(t))))+\Linf(0,T;W^{-1,1}((s_1,r_{in}(t))))\\
    &+L^{\infty}(0,T;W^{-1,\frac{2\gamma}{\gamma+1}}(((s_1,r_{in}(t))))\\
    &\subseteq L^1(0,T;W^{-2,1}((s_1,r_{in}(t)))),
\end{aligned}
\end{equation}
by Lemma \ref{App2},
\begin{equation}\label{rhouSq}
    \rhoxj(\uxj)^2\rightharpoonup\rhoin(\uin)^2\text{ in }D'((s_1,\rin(t))\times(0,T)).
\end{equation}
Since
\begin{equation}
    r(\uxj)^2(r,t)=\int_{\xi_j}^r\partial_y(y(\uxj(y,t))^2)dy\le C\int_{\xi_j}^r((\partial_y\uxj)^2+\frac{2(\uxj)^2}{y^2})y^2dy=:A^2(t),
\end{equation}
where $A(t)\in L^2([0,T])$, by Young's inequality (with powers $(2\gamma+1,\frac{2\gamma}{2\gamma+1})$),
\begin{equation}
    \int_0^T\int_{s_1}^{r_{in}(t)}(\rhoxj)^{\frac{4\gamma}{2\gamma+1}}(\uxj)^{\frac{8\gamma}{2\gamma+1}}drdt\le C\int_0^T\int_{s_1}^{r_{in}(t)}(\rhoxj)^{2\gamma}+A^2(t)\rhoxj(\uxj)^2drdt\le C.
\end{equation}
So the convergence of $\rhoxj(\uxj)^2$ holds in $L^{\frac{4\gamma}{2\gamma+1}}((s_1,r_{in}(t))\times[0,T])$.

Next, we deal with the terms containing $\int_r^{a(t)}\rhoxj\uxj\phi dy$. From (\ref{DtIntrhou}), we know
\begin{equation}
\begin{aligned}
    \Dt&\int_r^{a(t)}\rhoxj\uxj\phi dy\\&\in\Linf((s_1,r_{in}(t))\times[0,T])+L^2((s_1,r_{in}(t))\times[0,T])+\Linf(0,T;L^1((s_1,r_{in}(t)))),
\end{aligned}
\end{equation}
by Aubin-Lions lemma,
\begin{equation}\label{Strong Cov of IntRhoU}
    \int_r^{a(t)}\rhoxj\uxj\phi dy\rightarrow\int_r^{a(t)}\rhoin\uin\phi dy\text{ strongly in }L^2(0,T;L^p((s_1,r_{in}(t))))\ (p\le\infty).
\end{equation}
Also, by H\"older inequality,
\begin{equation}
\begin{aligned}
    (\rhoxj)^{\theta}\in& L^{\frac{2\gamma}{\theta}}(0,T;L^{\frac{2\gamma}{\theta}}_{loc}((0,r_{in}(t)))),\\
     (\rhoxj)^{\gamma}\in& L^2((0,T;L^2_{loc}(0,r_{in}(t)))),\\
    (\rhoxj)^{\theta}\partial_r\uxj\in& L^{\frac{2\gamma}{\theta+\gamma}}(0,T;L^{\frac{2\gamma}{\theta+\gamma}}_{loc}((0,r_{in}(t)))),
\end{aligned}
\end{equation}
combined with (\ref{Strong Cov of IntRhoU}), we have
    \begin{equation}\label{wkconvergenceLemma1: part 1}
    \begin{aligned}
        (\rhoxj)^{\theta}\int_r^{a(t)}\rhoxj\uxj\phi dy&\rightharpoonup\overline{\rho^{\theta}}\int_r^{a(t)}\rho_{in} u_{in}\phi dy\text{ in }D'((s_1,r_{in}(t))\times(0,T)),\\
        (\rhoxj)^{\gamma}\int_r^{a(t)}\rhoxj\uxj\pt\phi dy&\rightharpoonup\overline{\rho^{\gamma}}\int_r^{a(t)}\rho_{in} u_{in}\pt\phi dy\text{ in }D'((s_1,r_{in}(t))\times(0,T)),\\
        (\rhoxj)^{\theta}\pr\uxj\int_r^{a(t)}\rhoxj\uxj\phi dy&\rightharpoonup\overline{\rho^{\theta}\pr u}\int_r^{a(t)}\rho_{in} u_{in}\phi dy\text{ in }D'((s_1,r_{in}(t))\times(0,T)).
    \end{aligned}
\end{equation}

Next, for the terms involving $\int_r^{a(t)}\frac{1}{y}\rhoxj(\uxj)^2\phi dy$, by (\ref{rhouSq}) and dominated convergence theorem,
\begin{equation}
\begin{aligned}
    \int_r^{a(t)}\frac{1}{y}&\rhoxj(\uxj)^2\phi dy\rightharpoonup\int_r^{a(t)}\frac{1}{y}\rho_{in} u^2_{in}\phi dy\\
    &\text{ weakly/weak* in }L^p((s_1,r_{in}(t))\times(0,T))\ (1<p<\infty/p=\infty).
\end{aligned}
\end{equation}
By (\ref{rhotheta}), we know
\begin{equation}
    \Dt(\rhoxj)^{\theta}\in L^{\frac{2\gamma}{\theta+\gamma}}(0,T;W^{-1,\frac{2\gamma}{\theta+\gamma}}((s_1,r_{in}(t)))),
\end{equation}
thus
\begin{equation}\label{ConvOfEverything}
    \begin{aligned}
        (\rhoxj)^{\theta}\int_r^{a(t)}\frac{1}{y}\rhoxj(\uxj)^2\phi dy
        \rightharpoonup\overline{\rho^{\theta}}\int_r^{a(t)}\frac{1}{y}\rho_{in} u^2_{in}\phi dy\text{ in }D'((s_1,r_{in}(t))\times(0,T)).
    \end{aligned}
\end{equation}

The weak convergence of the following terms can be obtained similarly:
\begin{equation}\label{ConvOfEverything2}
    \begin{aligned}
        &(\rhoxj)^{\theta}\int_r^{a(t)}\rhoxj(\uxj)^2\partial_y\phi dy,\ (\rhoxj)^{\theta}\int_r^{a(t)}(\rhoxj)^{\gamma}\partial_y\phi dy,\\
        &(\rhoxj)^{\theta}\int_r^{a(t)}\partial_y\uxj\partial_y\phi dy,\ (\rhoxj)^{\theta}\int_r^{a(t)}\frac{\uxj}{y}\partial_y\phi dy,\ (\rhoxj)^{\theta}\frac{\uxj}{r}\\
        \rightharpoonup&\ \overline{\rho^{\theta}}\int_r^{a(t)}\rho_{in} u^2_{in}\partial_y\phi dy,\ \overline{\rho^{\theta}}\int_r^{a(t)}\ \overline{\rho^{\gamma}}\partial_y\phi dy,\\
        &\overline{\rho^{\theta}}\int_r^{a(t)}\partial_yu_{in}\partial_y\phi dy,\ 
 \overline{\rho^{\theta}}\int_r^{a(t)}\frac{u_{in}}{y}\partial_y\phi dy,\ \overline{\rho^{\theta}}\frac{u_{in}}{r}\text{ in }D'((s_1,r_{in}(t))\times(0,T)).
    \end{aligned}
\end{equation}

Next, by (\ref{rhoGammaTheta}), 
\begin{equation}
    \Dt\left((\rhoxj)^{\theta}\int_r^{a(t)}\rhoxj\uxj\phi dy\right)\in L^1((s_1,r_{in}(t))\times[0,T])+L^1(0,T;W^{-1,1}((s_1,r_{in}(t)))),
\end{equation}
then by Lemma \ref{App2} and (\ref{wkconvergenceLemma1: part 1}),
\begin{equation}
    \begin{aligned}
        \uxj(\rhoxj)^{\theta}\int_r^{a(t)}\rhoxj\uxj\phi dy&\rightharpoonup u_{in}\overline{\rho^{\theta}}\int_r^{a(t)}\rho_{in} u_{in}\phi dy\text{ in }D'((s_1,r_{in}(t))\times(0,T)),\\
        \frac{1}{r}\uxj(\rhoxj)^{\theta}\int_r^{a(t)}\rhoxj\uxj\phi dy&\rightharpoonup\frac{1}{r}u_{in}\overline{\rho^{\theta}}\int_r^{a(t)}\rho_{in} u_{in}\phi dy\text{ in }D'((s_1,r_{in}(t))\times(0,T)).    
\end{aligned}
\end{equation}

At last, we derive the weak convergence of the gravitational term, that is, the last term on the right-hand side of (\ref{rhoGammaTheta}). From the mass equation in (\ref{nsp3}), we have $\Dt\int_0^r\rhoxj y^2dy\in\Linf(0,T;L^{\frac{2\gamma}{\gamma+1}}((s_1,r_{in}(t))))$. Also, $\int_0^r\rhoxj y^2dy\in L^{2\gamma}(0,T;W^{1,2\gamma}((s_1,r_{in}(t))))$, by Aubin-Lions lemma,
\begin{equation}
    \int_0^r\rhoxj y^2dy\rightarrow\int_0^r\rho_{in} y^2dy\text{ in }L^{2\gamma}(0,T;L^{2\gamma}((s_1,r_{in}(t)))).
\end{equation}
From the basic energy estimate in Lemma \ref{BasicEnergyEst}, $\int_0^{a(t)}\frac{1}{r^2}(\int_0^r\rhoxj s^2ds)^2dr$ is uniformly bounded, thus
\begin{equation}
    \frac{1}{r}\int_0^r\rhoxj s^2ds\rightharpoonup\frac{1}{r}\int_0^r\rho_{in} s^2ds\text{ weak* in }\Linf(0,T;L^2((0,r_{in}(t)))),
\end{equation}
thus the convergence is weakly in $L^p(0,T;L^2((0,r_{in}(t))))$ for $1\le p<\infty$. Since
\begin{equation}
    \frac{1}{r+h}\int_0^{r+h}\rhoxj s^2ds-\frac{1}{r}\int_0^r\rhoxj s^2ds=\frac{1}{r+h}\int_r^{r+h}\rhoxj s^2dx+(\frac{1}{r+h}-\frac{1}{r})\int_0^r\rhoxj s^2ds,
\end{equation}
by Sobolev embedding, $W^{1,2\gamma}((s_1,r_{in}(t)))\hookrightarrow C^{1-\frac{1}{2\gamma}}((s_1,r_{in}(t)))$, we have
\begin{equation}
    \left|\int_r^{r+h}\rhoxj s^2ds\right|\le C|h|^{1-\frac{1}{2\gamma}}.
\end{equation}
Then
\begin{equation}
\begin{aligned}
    \|\frac{1}{r+h}\int_0^{r+h}\rhoxj s^2ds-\frac{1}{r}\int_0^r\rhoxj s^2ds\|_{L^p(0,T;L^2((s_1,r_{in}(t))))}\rightarrow0\text{ as }|h|\rightarrow0\text{ uniformly in }\xi.
\end{aligned}
\end{equation}
By Lemma \ref{App2},
\begin{equation}\label{Weak conv of rho times mass}
    \frac{\rhoxj}{r}\cdot\frac{1}{r}\int_0^r\rhoxj s^2ds\rightharpoonup\frac{\rho_{in}}{r}\cdot\frac{1}{r}\int_0^r\rho_{in} s^2ds\text{ in }D'((s_1,r_{in}(t))\times(0,T)).
\end{equation}
Since $\frac{\rhoxj}{r}\cdot\frac{1}{r}\int_0^r\rhoxj s^2ds\in L^{\frac{2\gamma p}{2\gamma+p}}(0,T;L^{\frac{2\gamma}{\gamma+1}}((s_1,r_{in}(t))))$, the weak convergence also holds in the same space. So
\begin{equation}
    \int_r^{a(t)}\frac{4\pi\rhoxj\phi}{y^2}\int_0^y\rhoxj s^2dsdy\text{ converges weakly in }L^{\frac{2\gamma p}{2\gamma+p}}((s_1,r_{in}(t))\times(0,T)).
\end{equation}
Take $p=\frac{2\gamma}{2\gamma-(1+\theta)}$, then $\frac{2\gamma p}{2\gamma+p}=\frac{2\gamma}{2\gamma-2\theta}$. So
\begin{equation}
    (\rhoxj)^{\theta}
    \int_r^{a(t)}\frac{4\pi\rhoxj\phi}{y^2}\int_0^y\rhoxj s^2dsdy\rightharpoonup\overline{\rho^{\theta}}
    \int_r^{a(t)}\frac{4\pi\rho_{in}\phi}{y^2}\int_0^y\rho_{in} s^2dsdy\text{ in }D'((s_1,r_{in}(t))\times(0,T)).
\end{equation}

As a result, if we take $\xi_j\rightarrow0$ in (\ref{rhoGammaTheta}), we have
\begin{equation}\label{rhoGammaThetaWk}
    \medmath{\begin{aligned}
        &\overline{\rho^{\gamma+\theta}}\phi-\pvcons(\overline{\rho^{\theta}\pr u}+\overline{\rho^{\theta}}\frac{2u_{in}}{r})\phi\\
        =&\partial_t\left(\overline{\rho^{\theta}}\int_r^{a(t)}\rho_{in} u_{in}\phi dy\right)+\partial_r\left(u_{in}\overline{\rho^{\theta}}\int_r^{a(t)}\rho_{in} u_{in}\phi dy\right)+\left(\frac{2\theta}{r}u_{in}\overline{\rho^{\theta}}+(\theta-1)\overline{\rho^{\theta}\partial_ru}\right)\int_r^{a(t)}\rho_{in} u_{in}\phi dy\\
        &-\overline{\rho^{\theta}}\int_r^{a(t)}\rho_{in} u_{in}\partial_t\phi dy+\overline{\rho^{\theta}}\int_r^{a(t)}\frac{2}{y}\rho_{in} u^2_{in}\phi dy-\overline{\rho^{\theta}}\int_r^{a(t)}(\rho_{in} u^2_{in}+\overline{\rho^{\gamma}})\partial_y\phi dy\\
        &+\overline{\rho^{\theta}}\int_r^{a(t)}\pvcons(\partial_yu_{in}+\frac{2u_{in}}{y})\partial_y\phi dy+\overline{\rho^{\theta}}\int_r^{a(t)}\frac{4\pi\rho_{in}}{y^2}\phi\int_0^r\rho_{in}(s,t)s^2dsdy.
    \end{aligned}}
\end{equation}

On the other hand, with the help of (\ref{ConvOfRhoU}), (\ref{rhouSq}), (\ref{ConvOfEverything}) and (\ref{Weak conv of rho times mass}), if taking $\xi_j\rightarrow0$ in the equation (\ref{apprE1}) and (\ref{rhotheta}) (in the sense of distribution), we get
\begin{equation}\label{WkLimEq1}
    \begin{cases}
        \pt\rho_{in}+\pr(\rho_{in} u_{in})+\frac{2\rho_{in} u_{in}}{r}=0,\\
        \pt(\rho_{in} u_{in})+\pr(\rho_{in} u^2_{in}+\overline{\rho^{\gamma}})+\frac{2}{r}\rho_{in} u^2_{in}=-\frac{4\pi\rho_{in}}{r^2}\int_0^r\rho_{in} s^2ds+\pr\left(\frac{\eta+\frac{4}{3}\varepsilon}{r^2}\pr(r^2u_{in})\right),
    \end{cases}
\end{equation}
and
\begin{equation}\label{WkLimEq2}
    \partial_t\overline{\rho^{\theta}}+\partial_r(u_{in}\overline{\rho^{\theta}})+\frac{2\theta}{r}u_{in}\cdot\overline{\rho^{\theta}}=(1-\theta)\overline{\rho^{\theta}\partial_ru},
\end{equation}
where the convergence is in the sense of distribution. By a similar computation, we obtain
\begin{equation}\label{rhoGammaThetaWkWk}
    \medmath{\begin{aligned}
        &\overline{\rho^{\gamma}}\overline{\rho^{\theta}}\phi-\pvcons(\partial_r\uin+\frac{2\uin}{r})\overline{\rho^{\theta}}\phi\\
        =&\partial_t\left(\overline{\rho^{\theta}}\int_r^{a(t)}\rho_{in} u_{in}\phi dy\right)+\partial_r\left(u_{in}\overline{\rho^{\theta}}\int_r^{a(t)}\rho_{in} u_{in}\phi dy\right)+\left(\frac{2\theta}{r}u_{in}\overline{\rho^{\theta}}+(\theta-1)\overline{\rho^{\theta}\partial_ru}\right)\int_r^{a(t)}\rho_{in} u_{in}\phi dy\\
        &-\overline{\rho^{\theta}}\int_r^{a(t)}\rho_{in} u_{in}\partial_t\phi dy+\overline{\rho^{\theta}}\int_r^{a(t)}\frac{2}{y}\rho_{in} u^2_{in}\phi dy-\overline{\rho^{\theta}}\int_r^{a(t)}(\rho_{in} u^2_{in}+\overline{\rho^{\gamma}})\partial_y\phi dy\\
        &+\overline{\rho^{\theta}}\int_r^{a(t)}\pvcons(\partial_yu_{in}+\frac{2u_{in}}{y})\partial_y\phi dy+\overline{\rho^{\theta}}\int_r^{a(t)}\frac{4\pi\rho_{in}}{y^2}\phi\int_0^r\rho_{in}(s,t)s^2dsdy.
    \end{aligned}}
\end{equation}
Comparing (\ref{rhoGammaThetaWk}) and (\ref{rhoGammaThetaWkWk}), we get the conclusion.
\end{proof}
Although we do not establish the $L^2$ integrability of the density, the following lemma is sufficient for proving the conclusion for the weak limit of $(\rhoxj)^{\theta}$.
\begin{lem}\label{wkconvergenceLemma2}
    Let $0<\theta<1$ satisfy $\frac{1}{2}(1-\theta+\sqrt{1+6\theta+\theta^2})\le\gamma$, then
    \begin{equation}
        \rhoin^{\theta}-\overline{\rho^{\theta}}\in L^{\frac{2}{\theta}}(0,T;L^{\frac{2}{\theta}}((0,\rin(t)),r^2dr)).
    \end{equation}
\end{lem}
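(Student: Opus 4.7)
The plan is to derive a transport-type equation for the defect $w:=\rho_{in}^\theta-\overline{\rho^\theta}$ and obtain the claimed bound by a weighted $L^{2/\theta}$ energy estimate whose divergence term is absorbed using the effective viscous flux. Two positivity facts, both coming from the Young-measure description of $\{\rho^{\xi_j}\}$, underpin the argument: Jensen's inequality for the concave map $s\mapsto s^\theta$ gives $w\ge 0$, and Chebyshev's sum / correlation inequality for the two comonotone functions $s^\theta$ and $s^\gamma$ gives
\[
D:=\overline{\rho^{\theta+\gamma}}-\overline{\rho^\theta}\cdot\overline{\rho^\gamma}\;\ge\; 0.
\]
Lemma \ref{wkconvergenceLemma1} rewrites the defect of products as $\overline{\rho^\theta\partial_r u}-\overline{\rho^\theta}\partial_r u_{in}=(\eta+\tfrac{4}{3}\varepsilon)^{-1}D$, so subtracting the renormalized mass equation for $\rho_{in}^\theta$ (obtained from the first equation of \eqref{WkLimEq1}) from the limiting renormalized equation \eqref{WkLimEq2} yields
\[
\partial_t w+u_{in}\partial_r w+\theta\, w\Bigl(\partial_r u_{in}+\frac{2u_{in}}{r}\Bigr)=-\frac{1-\theta}{\eta+\frac{4}{3}\varepsilon}\,D,
\]
whose right-hand side is nonpositive.

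The decisive choice is the exponent $p=2/\theta$. After multiplying by $pw^{p-1}$, weighting by $r^2$, and using $p\theta=2$, the convective and volume terms combine into exact divergence form
\[
r^2\partial_t w^{2/\theta}+\partial_r\bigl(r^2 u_{in}w^{2/\theta}\bigr)+r^2 w^{2/\theta}\Bigl(\partial_r u_{in}+\frac{2u_{in}}{r}\Bigr)\le 0.
\]
Integrating in $r$ from $0$ to $r_{in}(t)$, where $r_{in}$ is taken to be a trajectory of $u_{in}$, the right endpoint boundary term cancels the Reynolds-transport contribution to $\frac{d}{dt}$ and the left endpoint vanishes thanks to $u_{in}(0,\cdot)=0$. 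Since $w(\cdot,0)\equiv 0$, integrating in time gives
\[
\int_0^{r_{in}(t)}w^{2/\theta}r^2\,dr+\int_0^t\!\!\int_0^{r_{in}(s)}w^{2/\theta}\Bigl(\partial_r u_{in}+\frac{2u_{in}}{r}\Bigr)r^2\,dr\,ds\;\le\; 0.
\]

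To close the estimate I rewrite $\partial_r u_{in}+\tfrac{2u_{in}}{r}=(\eta+\tfrac{4}{3}\varepsilon)^{-1}\bigl(\overline{\rho^\gamma}-\overline{\sigma}\bigr)$ using the weak limit of the effective viscous flux \eqref{EffVisFlux}. The $\overline{\sigma}$-contribution is controlled by H\"older's inequality combined with the uniform $L^2$ bound on $\sigma^{\xi_j}/\sqrt{\rho^{\xi_j}}$ from the basic energy estimate; the $\overline{\rho^\gamma}$-contribution is interpolated between $L^\infty_t L^1_x$ (mass conservation) and the higher integrability $\int\rho^{2\gamma}r^8\,dr\,dt<\infty$ from Lemma \ref{rhoIntegrability}. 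Together with the pointwise bound $w\le\rho_{in}^\theta$, Young's inequality yields a Gronwall-type bound for $\int_0^{r_{in}(t)}w^{2/\theta}r^2\,dr$; the arithmetic of the H\"older exponents closes precisely when $\theta(2-\gamma)\le\gamma(\gamma-1)$, which rearranges to the hypothesis $\tfrac{1}{2}(1-\theta+\sqrt{1+6\theta+\theta^2})\le\gamma$.

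I expect the main obstacle to be the rigorous justification of the renormalized mass equation for $\rho_{in}^\theta$ and of the integration-by-parts step near $r=0$, where none of the preceding uniform estimates yields pointwise control on $\rho$ or $\sigma$, and where the weight $r^2$ degenerates. The natural remedy is to carry out the energy estimate on an annulus $[\varepsilon,r_{in}(t))$, handling the $r=\varepsilon$ boundary flux via a DiPerna-Lions-type commutator argument using only the available weak integrability of $\rho_{in}$ and $u_{in}$ (which is weaker than $W^{1,\infty}$), and then passing $\varepsilon\to 0$ using the basic energy bound on $\int\rho u^2 r^2\,dr$ to show that the origin contributions vanish; this step is exactly where the sharp interplay between $\theta$ and $\gamma$ in the stated hypothesis becomes decisive.
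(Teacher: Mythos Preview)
Your approach is fundamentally different from the paper's and, as written, does not close.

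The paper's proof is a short, purely algebraic pointwise estimate with no transport equation and no Gronwall. From Lemma~\ref{wkconvergenceLemma1} and convexity one has
\[
\rho_{in}^{\gamma}\bigl(\rho_{in}^{\theta}-\overline{\rho^{\theta}}\bigr)\le \overline{\rho^{\theta+\gamma}}-\overline{\rho^{\theta}}\,\overline{\rho^{\gamma}}=(\eta+\tfrac{4}{3}\varepsilon)\bigl(\overline{\rho^{\theta}\partial_r u}-\overline{\rho^{\theta}}\partial_r u_{in}\bigr),
\]
and the right-hand side lies in $L^{2\gamma/(\gamma+2\theta)}$ directly from the basic energy estimate (H\"older between $\rho^{\theta}\in L^{\gamma/\theta}$ and $\partial_r u\in L^2$). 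One then writes
\[
w^{2/\theta}=w^{\frac{2\gamma}{\gamma+2\theta}}\cdot w^{\frac{2}{\theta}-\frac{2\gamma}{\gamma+2\theta}}\le C\bigl(\rho_{in}^{\gamma}w\bigr)^{\frac{2\gamma}{\gamma+2\theta}}\cdot\rho_{in}^{2-\frac{2\gamma\theta}{\gamma+2\theta}-\frac{2\gamma^2}{\gamma+2\theta}}\cdot(1+\rho_{in}^{\gamma}),
\]
using $w\le\rho_{in}^{\theta}$; the stated restriction on $\theta$ is exactly what makes the middle exponent nonpositive, so that the right-hand side is bounded by $C(1+\rho_{in}^{\gamma}+(\rho_{in}^{\gamma}w)^{2\gamma/(\gamma+2\theta)})\in L^1$. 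No integration in time is needed.

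Your proposed route has two genuine gaps. First, the Gronwall step does not close near the origin: the coefficient $\partial_r u_{in}+\tfrac{2u_{in}}{r}$ is only in $L^2_tL^2_x(r^2dr)$, not $L^1_tL^\infty_x$, and your attempt to rewrite it via $\overline{\sigma}$ does not help because the $\sigma^2/\rho$ bound in Lemma~\ref{HigherEnergyEst1} is valid only for $x\ge x_0>0$, i.e.\ away from the origin. With only $w\le\rho_{in}^{\theta}$ and $\rho_{in}\in L^\infty_tL^\gamma_x$ (so $w^{2/\theta}\le\rho_{in}^2$, which for $\gamma\le\tfrac{4}{3}$ is not even locally integrable near $r=0$), there is no H\"older/Young splitting that produces an inequality of the form $\int w^{2/\theta}|\mathrm{div}\,u_{in}|r^2dr\le g(t)\int w^{2/\theta}r^2dr + h(t)$ with $g\in L^1(0,T)$. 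Second, the renormalization step itself (multiplying by $(2/\theta)w^{2/\theta-1}$ and integrating by parts) requires, via a DiPerna--Lions commutator argument, that $w^{2/\theta}$ already has enough integrability to absorb the commutator errors---which is precisely the statement you are trying to prove. This is not merely a technicality: in the paper, the transport argument you describe is carried out in the \emph{next} lemma, and the present $L^{2/\theta}$ bound is an essential input there (to justify that $f^{1/\theta}\in C([0,T];L^\gamma_{loc}\text{-}w)$ and that the cutoff errors vanish).
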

\begin{proof}
    By Lemma \ref{wkconvergenceLemma1}, we have
    \begin{equation}\label{rhoDiff1}
        \overline{\rho^{\theta+\gamma}}-\overline{\rho^{\theta}}\overline{\rho^{\gamma}}=(\eta+\frac{4}{3}\varepsilon)(\overline{\rho^{\theta}\pr u}-\overline{\rho^{\theta}}\pr\uin).
    \end{equation}
    By convexity\cite[Corollary 3.33]{AI}, we have $\overline{\rho^{\gamma+\theta}}\ge\overline{\rho^{\gamma}}^{\frac{\gamma+\theta}{\gamma}},\ \overline{\rho^{\gamma}}\ge\rhoin^{\gamma},\ \rhoin^{\theta}\ge\overline{\rho^{\theta}}$. Then
    \begin{equation}\label{rhoDiff2}
        \overline{\rho^{\theta+\gamma}}-\overline{\rho^{\theta}}\overline{\rho^{\gamma}}\ge\overline{\rho^{\gamma}}^{\frac{\gamma+\theta}{\gamma}}-\overline{\rho^{\theta}}\overline{\rho^{\gamma}}\ge\overline{\rho^{\gamma}}(\overline{\rho^{\gamma}}^{\frac{\theta}{\gamma}}-\overline{\rho^{\theta}})\ge\rhoin^{\gamma}(\rhoin^{\theta}-\overline{\rho^{\theta}})\ge0.
    \end{equation}
    It follows from the basic energy estimate in Lemma \ref{BasicEnergyEst} that
    \begin{equation}
        (\rhoxj)^{\theta}\pr\uxj,\ \rhoin^{\theta}\pr\uin\in L^{\frac{2\gamma}{\gamma+2\theta}}(0,T;L^{\frac{2\gamma}{\gamma+2\theta}}((0,\rin(t)),r^2dr)),
    \end{equation}
    which implies
    \begin{equation}
        \overline{\rho^{\theta}\pr u},\ \overline{\rho^{\theta}}\pr\uin\in L^{\frac{2\gamma}{\gamma+2\theta}}(0,T;L^{\frac{2\gamma}{\gamma+2\theta}}((0,\rin(t)),r^2dr)).
    \end{equation}
    As a result,
    \begin{equation}
        \rhoin^{\gamma}(\rhoin^{\theta}-\overline{\rho^{\theta}})\in L^{\frac{2\gamma}{\gamma+2\theta}}(0,T;L^{\frac{2\gamma}{\gamma+2\theta}}((0,\rin(t)),r^2dr)).
    \end{equation}
    So
    \begin{equation}
        \begin{aligned}
            (\rhoin^{\theta}-\overline{\rho^{\theta}})^{\frac{2}{\theta}}=&(\rhoin^{\theta}-\overline{\rho^{\theta}})^{\frac{2\gamma}{\gamma+2\theta}}(\rhoin^{\theta}-\overline{\rho^{\theta}})^{\frac{2}{\theta}-\frac{2\gamma}{\gamma+2\theta}}\\
            \le&C(\rhoin^{\theta}-\overline{\rho^{\theta}})^{\frac{2\gamma}{\gamma+2\theta}}\rhoin^{2-\frac{2\gamma\theta}{\gamma+2\theta}}\\
            \le&C(\rhoin^{\theta}-\overline{\rho^{\theta}})^{\frac{2\gamma}{\gamma+2\theta}}(1+\rhoin^{\frac{2\gamma^2}{\gamma+2\theta}})\\
            \le&C(1+\rhoin^{\frac{\gamma^2+2\gamma\theta}{\gamma+2\theta}}+(\rhoin^{\theta}-\overline{\rho^{\theta}})^{\frac{2\gamma}{\gamma+2\theta}}\rhoin^{\frac{2\gamma^2}{\gamma+2\theta}})\\
            =&C(1+\rhoin^{\gamma}+(\rhoin^{\theta}-\overline{\rho^{\theta}})^{\frac{2\gamma}{\gamma+2\theta}}\rhoin^{\frac{2\gamma^2}{\gamma+2\theta}})\in L^1(0,T;L^1((0,\rin(t)),r^2dr)),
        \end{aligned}
    \end{equation}
    where the second "$\le$" relies on $\frac{1}{2}(1-\theta+\sqrt{1+6\theta+\theta^2})\le\gamma$.
\end{proof}
Now, we are ready to prove $\overline{\rho^{\theta}}=\rhoin^{\theta}$ for $0<\theta\le1$. Then, we can apply Lemma \ref{App3.5} to get the following strong convergence of the density.
\begin{lem}
    For any $1\le p<2\gamma$, $\rhoxj\rightarrow\rhoin$ in $L^p_{loc}((0,\rin(t))\times[0,T])$.
\end{lem}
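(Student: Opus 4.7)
The plan is to prove that $\overline{\rho^\theta} = \rhoin^\theta$ for some $\theta \in (0,1)$ (and hence, by the same argument, for all $\theta \in (0,1]$), and then apply Lemma \ref{App3.5} to convert this identity into convergence in measure. Combined with the uniform $L^{2\gamma}_{loc}$ bound from (\ref{weakstarlim}), Vitali's convergence theorem upgrades this to strong $L^p_{loc}$ convergence for every $1 \le p < 2\gamma$.

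First, I would derive a transport equation for the defect $D_\theta := \rhoin^\theta - \overline{\rho^\theta}$, which is non-negative by Jensen's inequality. The renormalized version of the limit mass equation in (\ref{WkLimEq1}) yields the transport equation for $\rhoin^\theta$, while (\ref{WkLimEq2}) provides the analogous equation for $\overline{\rho^\theta}$. Subtracting and invoking Lemma \ref{wkconvergenceLemma1} to replace the bad term:
\begin{equation*}
\rhoin^\theta \pr \uin - \overline{\rho^\theta \pr u} = D_\theta\, \pr \uin - \frac{1}{\eta + \frac{4}{3}\varepsilon}\bigl(\overline{\rho^{\gamma+\theta}} - \overline{\rho^\theta}\,\overline{\rho^\gamma}\bigr),
\end{equation*}
one arrives at
\begin{equation*}
\pt D_\theta + \pr(\uin D_\theta) + \frac{2\theta \uin}{r} D_\theta = (1-\theta) D_\theta \pr \uin - \frac{1-\theta}{\eta + \frac{4}{3}\varepsilon}\bigl(\overline{\rho^{\gamma+\theta}} - \overline{\rho^\theta}\,\overline{\rho^\gamma}\bigr).
\end{equation*}
The second term on the right is pointwise non-positive by the chain of inequalities already exploited in (\ref{rhoDiff2}), so it can be dropped when seeking an upper bound on $D_\theta$.

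Next, I would multiply by $r^2$ and a smooth temporal cutoff, integrate over the moving domain $(0, \rin(t))$, and absorb the transport terms via the boundary conditions $\uin(0,t)=0$ and the kinematic identity at $r = \rin(t)$, using the flow-preserving pullback already introduced before Lemma \ref{wkconvergenceLemma1}. Since the initial data coincide, $D_\theta|_{t=0} \equiv 0$; combining this with the higher integrability $D_\theta \in L^{2/\theta}(0,T;L^{2/\theta}((0,\rin(t)),r^2dr))$ from Lemma \ref{wkconvergenceLemma2} and the control of $\pr \uin + 2\uin/r$ in $L^2_{t,x}$ from the basic energy estimate, a Gronwall-type argument forces $D_\theta \equiv 0$ on $(0,\rin(t))\times[0,T]$ for $\theta \in (0,1)$ chosen small enough that the constraint $\tfrac{1}{2}(1-\theta+\sqrt{1+6\theta+\theta^2})\le \gamma$ is met.

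The main obstacle is closing the Gronwall step in the presence of limited regularity: $\pr \uin$ lies only in $L^2_{t,x}$ rather than $L^\infty$, the density has no $L^\infty$ bound, and the coordinate singularity at $r=0$ together with the moving boundary at $\rin(t)$ prevent a naive estimate. The remedy is to pair $D_\theta \pr \uin$ with $D_\theta(\pr \uin + 2\uin/r)$, controlling the latter via the basic energy estimate while compensating the $2\uin\,D_\theta/r$ piece with the transport structure on the left; the weighted integrability from Lemma \ref{wkconvergenceLemma2} is designed precisely to make each resulting term lie in $L^1_{t,x}$ with the $r^2$ weight. Once $D_\theta = 0$ is established, Lemma \ref{App3.5} yields $\rhoxj \to \rhoin$ in measure on $(0,\rin(t))\times[0,T]$, and interpolating against the uniform $L^{2\gamma}_{loc}$ bound delivers the desired $L^p_{loc}$ convergence.
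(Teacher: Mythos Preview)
Your proposal follows the right overall strategy---derive a transport inequality for the defect $D_\theta=\rhoin^\theta-\overline{\rho^\theta}$, exploit the sign of $\overline{\rho^{\gamma+\theta}}-\overline{\rho^\theta}\,\overline{\rho^\gamma}$, and finish via Young measures---but the Gronwall step has a genuine gap. After your rewriting you obtain
\[
r^2\,\pt D_\theta+\pr(r^2\uin D_\theta)\le (1-\theta)\,r^2 D_\theta\Bigl(\pr\uin+\tfrac{2\uin}{r}\Bigr),
\]
and integrating in $r$ leaves $(1-\theta)\int r^2 D_\theta\,\dv\uin\,dr$ on the right. Your observation that this is in $L^1_{t,x}$ only yields a uniform bound on $\int_0^T\!\int r^2 D_\theta\,dr\,dt$, not that it vanishes. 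To close a Gronwall inequality you would need the right-hand side to be bounded by $C(t)\int r^2 D_\theta\,dr$ with $C\in L^1_t$, but H\"older against $\dv\uin\in L^2_{t,x}$ produces $\|D_\theta\|_{L^2(r^2dr)}$, not the $L^1$ norm you are tracking. No interpolation with Lemma~\ref{wkconvergenceLemma2} rescues this mismatch.

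The paper avoids Gronwall entirely by a different choice of weight. Multiplying the defect inequality by $r^{2\theta}$ (not $r^2$) and setting $f=r^{2\theta}D_\theta$ gives
\[
\pt f+\theta f\,\pr\uin+\uin\,\pr f\le 0,
\]
and then renormalizing to the power $1/\theta$ yields the clean divergence form
\[
\pt f^{1/\theta}+\pr\bigl(\uin f^{1/\theta}\bigr)\le 0,
\]
with \emph{no} right-hand side. The exponent $1/\theta$ is exactly what makes the $\pr\uin$ term disappear, and the weight $r^{2\theta}$ is exactly what makes the $\tfrac{2\theta\uin}{r}$ term combine with the others into a pure transport operator. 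Integrating this inequality (with cutoffs $\chi_\delta$ near $r=\rin(t)$ and $\zeta_\sigma$ near $r=0$ to handle the boundaries, where the $H^1$ regularity of $u_b$ from (\ref{ConvInLag}) and the $L^{2/\theta}$ bound from Lemma~\ref{wkconvergenceLemma2} control the resulting error terms) gives $\int f^{1/\theta}\,dr=0$ directly from $f|_{t=0}=0$. This algebraic trick---matching weight and renormalization power so that the velocity gradient drops out---is the missing idea in your proposal.
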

\begin{proof}
    By the first equation in (\ref{WkLimEq1}) and a similar argument as in the proof of \cite[Lemma 2.1, 2.2]{JZ}, we get
    \begin{equation}
        \pt\rhoin^{\theta}+\pr(\uin\rhoin^{\theta})+\frac{2\theta}{r}\uin\rhoin^{\theta}=(1-\theta)\rhoin^{\theta}\pr\uin,
    \end{equation}
    where $\theta$ satisfies the condition in Lemma \ref{wkconvergenceLemma2}. Subtract it from (\ref{WkLimEq2}) to get
    \begin{equation}
        \medmath{\pt(\rhoin^{\theta}-\overline{\rho^{\theta}})+\pr(\uin(\rhoin^{\theta}-\overline{\rho^{\theta}}))+\frac{2\theta}{r}\uin(\rhoin^{\theta}-\overline{\rho^\theta})=(1-\theta)(\rhoin^\theta\pr\uin-\overline{\rho^\theta\pr u})\le(1-\theta)(\rhoin^\theta-\overline{\rho^{\theta}})\pr\uin,}
    \end{equation}
    where the last "$\le$" follows from (\ref{rhoDiff1}), (\ref{rhoDiff2}). Multiply it by $r^{2\theta}$, we get
    \begin{equation}
        \pt f+\theta f\pr\uin+\uin\pr f\le0,
    \end{equation}
    where $f:=r^{2\theta}(\rhoin^\theta-\overline{\rho^\theta})\ge0$. By mollifying and then sending to the limit as in \cite{JZ}, we obtain
    \begin{equation}\label{Estf}
        \pt f^{\frac{1}{\theta}}+\pr(\uin f^{\frac{1}{\theta}})\le0
    \end{equation}
    and
    \begin{equation}
        \pt f^{\frac{1}{\theta}}+\pr(\uin f^{\frac{1}{\theta}})=\frac{1}{\theta}f^{\frac{1}{\theta}-1}(1-\theta)(\overline{\rho^{\theta}}\pr\uin-\overline{\rho^\theta\pr u}),
    \end{equation}
    where 
    \begin{equation}
        \begin{aligned}
            \uin f^{\frac{1}{\theta}}\in L^{\frac{2\gamma}{\gamma+1}}&(0,T;L^{\frac{2\gamma}{\gamma+1}}_{loc}((0,\rin(t)))),\\
            f^{\frac{1}{\theta}-1}\in L^{\frac{2\gamma}{1-\theta}}&(0,T;L^{\frac{2\gamma}{1-\theta}}_{loc}((0,\rin(t)))),\\
            (\overline{\rho^{\theta}}\pr\uin-\overline{\rho^\theta\pr u})\in& L^{\frac{2\gamma}{\gamma+\theta}}(0,T;L^{\frac{2\gamma}{\gamma+\theta}}_{loc}((0,\rin(t)))),
        \end{aligned}
    \end{equation}
    so $\Dt f^{\frac{1}{\theta}}\in L^{\frac{2\gamma}{\gamma+1}}(0,T;W^{-1,\frac{2\gamma}{1+\gamma}}_{loc}((0,\rin(t))))$, which implies $f^{\frac{1}{\theta}}\in C([0,T];W^{-1,\frac{2\gamma}{1+\gamma}}_{loc}((0,\rin(t))))$. Then by Lemma \ref{App3},
    \begin{equation}
        f^{\frac{1}{\theta}}\in C([0,T];L^{\gamma}_{loc}((0,\rin(t)))-w).
    \end{equation}

    To prove to prove $\overline{\rho^{\theta}}=\rhoin^{\theta}$ for $0<\theta\le1$, we first prove $f^{\frac{1}{\theta}}=0\ a.e.$ on $[0,\rin(t))\times[0,T]$ for $\theta>0$ satisfying $\frac{1}{2}(1-\theta+\sqrt{1+6\theta+\theta^2})\le\gamma$. Let $\chi_{\delta}(r,t)=\min\{\delta,(\rin(t)-r)_+\}$, then
    \begin{equation}
        \begin{aligned}
            \pt\chid=&u(\rin(t),t)\One_{\{r\le\rin(t)\le r+\delta\}},\\
            \pr\chid=&-\One_{\{r\le\rin(t)\le r+\delta\}},
        \end{aligned}
    \end{equation}
    where $\delta>0$ is a small number such that $r_{x_1}(t)<\rin(t)-\delta$ for $t\in[0,T]$. Also, we let $\zeta_{\sigma}\in C^{\infty}([0,\rin(t))\times[0,T])$
    \begin{equation}
        \zeta_{\sigma}(r,t)=\begin{cases}
            0,\text{ if }r<\frac{1}{2}\sigma,\\
            1,\text{ if }\sigma\le r\le\rin(t),
        \end{cases}
    \end{equation}
    where $\sigma>0$ is small enough, and $|\pr\zeta_{\sigma}(r)|\le\frac{C}{\sigma}$ for $\frac{1}{2}\sigma<r<\sigma$. So
    \begin{equation}\label{Chidf}
        \pt(\chid^m f^{\frac{1}{\theta}})+\pr(\chid^m\uin f^{\frac{1}{\theta}})=\chid^m(\pt f^{\frac{1}{\theta}}+\pr(\uin f^{\frac{1}{\theta}}))+m\chid^{m-1}f^{\frac{1}{\theta}}(\uin(\rin(t),t)-\uin(r,t)).
    \end{equation}
    Multiply (\ref{Chidf}) by $\zeta_{\sigma}$ and integrate it over $[0,\rin(t))\times[0,T]$, together with (\ref{Estf}), we have
    \begin{equation}\label{IntEstf}
        \begin{aligned}
            &\int_0^t\int_0^{\rin(s)}\partial_s(\chid^mf^{\frac{1}{\theta}})\zeta_\sigma+\pr(\chid^m\uin f^{\frac{1}{\theta}})\zeta_\sigma drds\\
            =&\int_0^t\int_0^{\rin(s)}\chid^m(\pt f^{\frac{1}{\theta}}+\pr(\uin f^{\frac{1}{\theta}}))\zeta_\sigma+m\chid^{m-1}f^{\frac{1}{\theta}}(\uin(\rin(s),s)-\uin(r,s))\zeta_\sigma drds\\
            \le&\int_0^t\int_{\rin(s)-\delta}^{\rin(s)}m\chid^{m-1}f^{\frac{1}{\theta}}|\uin(\rin(s),s)-\uin(r,s)|\zeta_\sigma drds\\
            \le&\int_0^t\int_{\rin(s)-\delta}^{\rin(s)}Cm\delta^{\frac{1}{2}}\chid^{m-1}\zeta_\sigma f^{\frac{1}{\theta}}drds,
        \end{aligned}
    \end{equation}
where we use the regularities of $u_b$ in (\ref{ConvInLag}) to deal with the term $|\uin(\rin(s),s)-\uin(r,s)|$ for $r\in[r_{in}(t)-\delta,r_{in}(t)]$. On the other hand,
    \begin{equation}
    \begin{aligned}
        \int_0^t\int_0^{\rin(s)}\pr(\chid^m\uin f^{\frac{1}{\theta}})\zeta_\sigma drds=&\int_0^t\left(-\int_0^{\sigma}\chid^m\uin f^{\frac{1}{\theta}}\pr\zeta_\sigma dr\right)ds\\
        =&\int_0^t\left(-\int_0^{\sigma}\delta^m\uin f^{\frac{1}{\theta}}\pr\zeta_\sigma dr\right)ds,
    \end{aligned}
    \end{equation}
    which, combining with (\ref{IntEstf}), gives
    \begin{equation}\label{IntEstftwo}
        \int_0^t\int_0^{\rin(s)}\partial_s(\chid^mf^{\frac{1}{\theta}})\zeta_\sigma drds\le\int_0^t\int_0^{\sigma}\delta^m|\uin| f^{\frac{1}{\theta}}|\pr\zeta_\sigma| drds+\int_0^t\int_{\rin(s)-\delta}^{\rin(s)}Cm\delta^{\frac{1}{2}}\chid^{m-1} f^{\frac{1}{\theta}}drds.
    \end{equation}
    Making use of the fact that $f(r,0)=0$, left hand side of (\ref{IntEstftwo}) can be written as $\int_0^{\rin(t)}\chid^mf^{\frac{1}{\theta}}\zeta_\sigma dr$. So
  \begin{equation}
      \int_0^{\rin(t)}\chid^mf^{\frac{1}{\theta}}\zeta_\sigma dr\le\int_0^t\int_0^{\sigma}\delta^m|\uin| f^{\frac{1}{\theta}}|\pr\zeta_\sigma| drds+\int_0^t\int_{\rin(s)-\delta}^{\rin(s)}Cm\delta^{\frac{1}{2}}\chid^{m-1} f^{\frac{1}{\theta}}drds.
  \end{equation}
  Notice that $\frac{1}{r}\uin  f^{\frac{1}{\theta}}=r\uin(\rhoin^{\theta}-\overline{\rho^{\theta}})^{\frac{1}{\theta}}\in L^1([0,\rin(t))\times[0,T])$ and $f^{\frac{1}{\theta}}\in L^2(0,T;L^2((0,\rin(t)),r^2dr))$,
\begin{equation}
\medmath{\begin{aligned}
    \int_0^t\int_0^{\sigma}\delta^m|\uin| f^{\frac{1}{\theta}}|\pr\zeta_\sigma| drds\le& C\delta^m\int_0^t\int_0^{\sigma}\frac{1}{r}|\uin|  f^{\frac{1}{\theta}}drds;\\
    \int_0^t\int_{\rin(s)-\delta}^{\rin(s)}Cm\delta^{\frac{1}{2}}\chid^{m-1} f^{\frac{1}{\theta}}drds\le&C\delta^{m-\frac{1}{2}}\int_0^t\int_{\rin(s)-\delta}^{\rin(s)}f^{\frac{1}{\theta}}drds\\
    \le&C\delta^{m-\frac{1}{2}}\int_0^t\delta^{\frac{1}{2}}\left(\int_{\rin(s)-\delta}^{\rin(s)}f^{\frac{2}{\theta}}dr\right)^{\frac{1}{2}}ds\le C\delta^m\sqrt{T}\left(\int_0^t\int_{\rin(t)-\delta}^{\rin(t)}f^{\frac{2}{\theta}}drds\right)^{\frac{1}{2}},
\end{aligned}}
\end{equation}
then we have
\begin{equation}
    \int_0^{\rin(t)}\delta^{-m}\chid^mf^{\frac{1}{\theta}}\zeta_\sigma dr\le C\int_0^t\int_0^{\sigma}\frac{1}{r}|\uin|  f^{\frac{1}{\theta}}drds+C\sqrt{T}\left(\int_0^t\int_{\rin(t)-\delta}^{\rin(t)}f^{\frac{2}{\theta}}drds\right)^{\frac{1}{2}}.
\end{equation}
Let $\delta\rightarrow0$, we get
\begin{equation}
    \int_0^{\rin(t)}f^{\frac{1}{\theta}}\zeta_\sigma dr\le C\int_0^t\int_0^{\sigma}\frac{1}{r}|\uin|  f^{\frac{1}{\theta}}drds.
\end{equation}
Let $\sigma\rightarrow0$, we have $f^{\frac{1}{\theta}}=0\ a.e.$ on $[0,\rin(t))\times[0,T]$.

Now, notice that for those $\theta>0$ within a small neighborhood of 0, there holds $\frac{1}{2}(1-\theta+\sqrt{1+6\theta+\theta^2})\le\gamma$. Fix such a $\theta_0$, and consider any $\theta_0<\theta\le1$, by convexity,
\begin{equation}
    \overline{\rho^{\theta}}\le\rhoin^{\theta}=\rhoin^{\theta_0\cdot\frac{\theta}{\theta_0}}=\overline{\rho^{\theta_0}}^{\frac{\theta}{\theta_0}}\le\overline{\rho^{\theta}},
\end{equation}
which means $\rhoin^{\theta}=\overline{\rho^\theta}\ a.e.$ holds for any $0<\theta\le1$, i.e.,
\begin{equation}
    (\rhoxj)^{\theta}\rightharpoonup\rhoin^{\theta}\text{ weakly in }L^{\frac{2\gamma}{\theta}}(0,T;L^\frac{2\gamma}{\theta}_{loc}((0,\rin(t)))).
\end{equation}

Suppose $0<\theta\le\frac{1}{2}$, and the Young's measure of $\{(\rhoxj)^\theta\}$ is $\mu((r,t),d\lambda)$, then
\begin{equation}
    \sigma((r,t))=\int\lambda^2\mu((r,t),d\lambda)-\left(\int\lambda\mu((r,t),d\lambda)\right)^2=\overline{\rho^{2\theta}}-(\overline{\rho^\theta})^2=0,
\end{equation}
which means the Young's measure associated with $\{(\rhoxj)^\theta\}$ is the Dirac measure. By Lemma \ref{App4}, 
\begin{equation}
    (\rhoxj)^{\theta}\rightarrow\rhoin^{\theta}\text{ strongly in }L^{p}(0,T;L^p_{loc}((0,\rin(t))))\ (1\le p<\frac{2\gamma}{\theta}).
\end{equation}
Now, for any $1\le p<2\gamma$,
\begin{equation}
    \medmath{\begin{aligned}
        \|\rhoxj-\rhoin\|_{L^p(0,T;L^p((s_1,\rin(t))))}=&\|((\rhoxj)^{\frac{1}{2}}-\rhoin^{\frac{1}{2}})((\rhoxj)^{\frac{1}{2}}+\rhoin^{\frac{1}{2}})\|_{L^p(0,T;L^p((s_1,\rin(t))))}\\
        \le&\|(\rhoxj)^{\frac{1}{2}}-\rhoin^{\frac{1}{2}}\|_{L^{2p}(0,T;L^{2p}((s_1,\rin(t))))}\|(\rhoxj)^{\frac{1}{2}}+\rhoin^{\frac{1}{2}}\|_{L^{2p}(0,T;L^{2p}((s_1,\rin(t))))}\\
        \le&C\|(\rhoxj)^{\frac{1}{2}}-\rhoin^{\frac{1}{2}}\|_{L^{2p}(0,T;L^{2p}((s_1,\rin(t))))}\rightarrow0\text{ as }\xi_j\rightarrow0,
    \end{aligned}}
\end{equation}
as desired.
\end{proof}
In particular, we have
\begin{equation}
\begin{aligned}
     &\rhoxj\rightarrow\rhoin\text{ in }L^1(0,T;L^1((0,\rin(t)),r^2dr)),\\
     &\rhoxj\uxj\rightharpoonup\rhoin\uin\text{ in }\Linf(0,T;L^{\frac{2\gamma}{\gamma+1}}((0,\rin(t)),r^2dr)).
\end{aligned}
\end{equation}

\subsubsection{Verification of weak solutions}
Finally, let
\begin{equation}\label{solnRad1}
    (\rho, u)=\begin{cases}
        (\rho_b,u_b)(r,t),\ r_{x_1}(t)\le r\le a(t),\ t\in[0,T],\\
        
        (\rhoin,\uin)(r,t),\ 0\le r\le\rin(t),\ t\in[0,T],
    \end{cases}
\end{equation}
where by choosing a proper subsequence, we can make 
\begin{equation}\label{solnRad2}
    (\rho_b,u_b)=(\rhoin,\uin)\ a.e.\ (r,t)\in[r_{x_1}(t),\rin(t)]\times[0,T],
\end{equation}
and we let
\begin{equation}\label{soln}
    (\rho,\u)(\bx,t)=(\rho,u\frac{\bx}{|\bx|})(|\bx|,t),
\end{equation}
we directly verify that (\ref{soln}) is indeed a weak solution as in Remark \ref{WeakSolution3D}, which will automatically imply (\ref{solnRad1}) is a weak solution satisfying Definition \ref{WeakSolutionRadSymmetry}.

For the mass equation, choose any $\varphi(\bx,t)\in C_c^{\infty}(\Omega_t\times[0,T])$, let 
$$\zeta=\frac{1}{r^2}\int_{\partial B(0;r)}\varphi(\bx,t)dS_r(x)=\int_{\partial B(0;1)}\varphi(r\boldsymbol{y},t)dS_1(y),$$
then $\zeta(r,t)\in C^{\infty}_c([0,a(t))\times[0,T])$. Let $\zeta=\zeta_1+\zeta_2$, where $\zeta_1,\ \zeta_2\in C^{\infty}_c([0,a(t))\times[0,T])$, and
\begin{equation}\label{zeta12}
    \supp\zeta_1\subset[0,\rin(t))\times[0,T],\ \supp\zeta_2\subset(r_{x_1}(t),a(t))\times[0,T].
\end{equation}
Since $(\rhoxj,\uxj)$ satisfies the mass equation in (\ref{nsp3}) on $[\xi_j,a^{\xi_j}(t))\times[0,T]$, we multiply it by $r^2\zeta_1$ and integrate by parts, making use of the fact that $(\rhoxj,\uxj)=(0,0)$ on $[0,\xi_j)\times[0,T]$ to get
\begin{equation}\label{solnRadin}
    \int_0^{\rin(t)}\rhoxj\zeta_1 r^2dr|_{t_1}^{t_2}-\int_{t_1}^{t_2}\int_0^{\rin(t)}(\rhoxj\pt\zeta_1+\rhoxj\uxj\pr\zeta_1)r^2drdt=0.
\end{equation}
Since $\Dt\rhoxj\in\Linf(0,T;W^{-1,\frac{2\gamma}{\gamma+1}}_{loc}((0,r_{in}(t))))$, by Lemma \ref{App3}, 
\begin{equation}
    \lim_{\xi_j\rightarrow0} \int_0^{\rin(t)}\rhoxj\zeta_1 r^2dr= \int_0^{\rin(t)}\rhoin\zeta_1 r^2dr\in C([0,T]).
\end{equation}
Send $\xi_j\rightarrow0$ to get
\begin{equation}
    \int_0^{\rin(t)}\rhoin\zeta_1 r^2dr|_{t_1}^{t_2}-\int_{t_1}^{t_2}\int_0^{\rin(t)}(\rhoin\pt\zeta_1+\rhoin\uin\pr\zeta_1)r^2drdt=0.
\end{equation}
On the other hand, notice that
\begin{equation}
    \ptau\rhoxj+4\pi(\rhoxj)^2(r^{\xi_j})^2\px \uxj+\frac{2\rhoxj\uxj}{r^{\xi_j}}=0,
\end{equation}
using the coordinates change $\ptau=\pt+u_b\partial_{r},\ \px=\frac{1}{4\pi\rho_br^2}\pr$, where $x=M-4\pi\int_r^{a(t)}\rho_b(s,t)s^2ds$, we get
\begin{equation}
    \pt\rhoxj+u_b\pr\rhoxj+\frac{(\rhoxj)^2(r^{\xi_j})^2}{\rho_b r^2}\pr\uxj+\frac{2\rhoxj\uxj}{r^{\xi_j}}=0,
\end{equation}
which implies
\begin{equation}\label{SolnBdary1}
    r^2\pt\rhoxj+\pr(r^2\rhoxj\uxj)-r^2(\frac{2\rhoxj\uxj}{r}-\frac{2\rhoxj\uxj}{r^{\xi_j}})-r^2(\uxj-u_b)\pr\rhoxj-\pr\uxj(\rhoxj r^2-\frac{(\rhoxj)^2(r^{\xi_j})^2}{\rho_b})=0.
\end{equation}
Multiply (\ref{SolnBdary1}) by $\zeta_2$, integrate it by parts to get
\begin{equation}\label{SolnBdary2}
        \medmath{\begin{aligned}
            &\int_{r_{x_1}(t)}^{a(t)}\rhoxj\zeta_2r^2drdt|_{t_1}^{t_2}-\int_{t_1}^{t_2}\int_{r_{x_1}(t)}^{a(t)}(\rhoxj\pt\zeta_2+\rhoxj\uxj\pr\zeta_2)r^2drdt\\
            +&\int_{t_1}^{t_2}\int_{r_{x_1}(t)}^{a(t)}-r^2(\frac{2\rhoxj\uxj}{r}-\frac{2\rhoxj\uxj}{r^{\xi_j}})\zeta_2-r^2(\uxj-u_b)\pr\rhoxj\zeta_2-\pr\uxj(\rhoxj r^2-\frac{(\rhoxj)^2(r^{\xi_j})^2}{\rho_b})\zeta_2drdt=0.
        \end{aligned}}
\end{equation}
When $\xi_j\rightarrow0$, the first term in the second line of (\ref{SolnBdary2}) goes to 0. Also, since
\begin{equation}
    \begin{aligned}
        &\left|\int_{t_1}^{t_2}\int_{r_{x_1}(t)}^{a(t)}\zeta_2r^2\pr\rhoxj(u_b-\uxj)drdt\right|\\
        =&\left|\int_{t_1}^{t_2}\int_{x_1}^M\zeta_2r^2\px\rhoxj(u_b-\uxj)dxd\tau\right|\\
        \le& C\int_{t_1}^{t_2}\left(\int_{x_1}^M(\px\rhoxj)^2dx\right)^{\frac{1}{2}}\left(\int_{x_1}^M|u_b-\uxj|^2dx\right)^{\frac{1}{2}}d\tau\rightarrow0,
    \end{aligned}
\end{equation}
\begin{equation}
    \begin{aligned}
        &\left|\int_{t_1}^{t_2}\int_{r_{x_1}(t)}^{a(t)}\zeta_2\pr\uxj\left(\frac{(\rhoxj)^2(r_{\xi_j})^2}{\rho_b}-r^2\rhoxj\right)drdt\right|\\
        =&\left|\int_{t_1}^{t_2}\int_{x_1}^{M}\zeta_2\px\uxj\left(\frac{(\rhoxj)^2(r_{\xi_j})^2}{\rho_b}-r^2\rhoxj\right)dxd\tau\right|\\
        \le&\int_{t_1}^{t_2}\left(\int_{x_1}^M\rhoxj(r^{\xi_j})^4(\px\uxj)^2dx\right)^{\frac{1}{2}}\left(\int_{x_1}^M\zeta_2^2\left(\frac{(\rhoxj)^{\frac{3}{2}}}{\rho_b}-\frac{r^2}{(r^{\xi_j})^2}(\rhoxj)^{\frac{1}{2}}\right)^2dx\right)^{\frac{1}{2}}d\tau\rightarrow0,    \end{aligned}
\end{equation}
where we use the strong convergence of $(\rhoxj,\uxj)$ and the fact that $\zeta_2$ is compactly supported in $[r_{x_1}(t),a(t))$ for each fixed $t\in[0,T]$. As a result, we obtain
\begin{equation}\label{solnRadBdry}
    \int_{r_{x_1}(t)}^{a(t)}\rho_b\zeta_2r^2drdt|_{t_1}^{t_2}-\int_{t_1}^{t_2}\int_{r_{x_1}(t)}^{a(t)}(\rho_b\pt\zeta_2+\rho_bu_b\pr\zeta_2)r^2drdt=0.
\end{equation}
Combining (\ref{solnRadin}) and (\ref{solnRadBdry}), we get
\begin{equation}\label{solnRadmass}
    \int_{0}^{a(t)}\rho\zeta r^2drdt|_{t_1}^{t_2}-\int_{t_1}^{t_2}\int_{0}^{a(t)}(\rho\pt\zeta+\rho u\pr\zeta)r^2drdt=0.
\end{equation}
Thus we have
\begin{equation}\label{FinalRs1}
    \int_{\Omega_t}\rho\varphi\dbx|_{t_1}^{t_2}=\int_{t_1}^{t_2}\int_{\Omega_t}\rho\partial_t\varphi+\rho\u\cdot\nabla_x\varphi\dbx dt.
\end{equation}
By a density argument, (\ref{FinalRs1}) holds for any $\varphi\in C^1_c([0,T]\times{\Omega_t})$.

For the momentum equation, similarly, we choose any test function $\boldsymbol{\psi}(\bx,t)\in(C^{\infty}_c([0,a(t))\times[0,T]))^3$ and let
$$\zeta(r,t)=\frac{1}{r^2}\int_{\partial B(0;r)}\boldsymbol{\psi}(\bx,t)\cdot\boldsymbol{\omega}dS_r(x)=\int_{\partial B(0;1)}\boldsymbol{\psi}(r\boldsymbol{y},t)\cdot\boldsymbol{y}dS_1(y),$$
then $\zeta(r,t)\in C^{\infty}_c([0,a(t))\times[0,T])$. By divergence theorem,
\begin{equation}
    |\zeta(r,t)|=\left|\frac{1}{r^2}\int_{B(0;r)}\dv\boldsymbol{\psi}(\bx,t)\dbx\right|\le Cr\|\boldsymbol{\psi}\|_{C^1}.
\end{equation}
Write $\zeta=\zeta_1+\zeta_2$, where $\zeta_1,\ \zeta_2$ satisfy (\ref{zeta12}). Since $(\rhoxj,\uxj)$ satisfies the momentum equation in (\ref{nsp3}) on $[\xi_j,a^{\xi_j}(t))\times[0,T]$, we multiply the momentum equation by $r^2\zeta_1$, integrate by parts and making use of the fact that $(\rhoxj,\uxj)=(0,0)$ on $[0,\xi_j)\times[0,T]$ to get
\begin{equation}
    \medmath{\begin{aligned}
        &\int_0^{\rin(t)}\rhoxj\uxj r^2\zeta_1dr|_{t_1}^{t_2}-\int_{t_1}^{t_2} \int_0^{\rin(t)}\rhoxj\uxj\pt\zeta_1r^2drdt\\&-\int_{t_1}^{t_2} \int_0^{\rin(t)}(\rhoxj)^\gamma(\pr\zeta_1+\frac{2\zeta_1}{r})r^2drdt-\int_{t_1}^{t_2} \int_0^{\rin(t)}\rhoxj(\uxj)^2\pr\zeta_1r^2drdt\\
        =&-\int_{t_1}^{t_2} \int_0^{\rin(t)}4\pi\rhoxj\zeta_1\int_0^r\rhoxj(s,t)s^2dsdrdt-\int_{t_1}^{t_2} \int_0^{\rin(t)}(\eta+\frac{4}{3}\varepsilon)(\pr\uxj+\frac{2\uxj}{r})(\pr\zeta_1+\frac{2\zeta_1}{r})r^2drdt.
    \end{aligned}}
\end{equation}
Integrating by parts, we have
\begin{equation}
\medmath{\begin{aligned}
    &\int_{t_1}^{t_2} \int_0^{\rin(t)}(\eta+\frac{4}{3}\varepsilon)(\pr\uxj+\frac{2\uxj}{r})(\pr\zeta_1+\frac{2\zeta_1}{r})r^2drdt\\
    =&\int_{t_1}^{t_2} \int_0^{\rin(t)}(\eta-\frac{2}{3}\varepsilon)(\pr\uxj+\frac{2\uxj}{r})(\pr\zeta_1+\frac{2\zeta_1}{r})r^2drdt+\int_{t_1}^{t_2} \int_0^{\rin(t)}2\varepsilon(\pr\uxj+\frac{2\uxj}{r})(\pr\zeta_1+\frac{2\zeta_1}{r})r^2drdt\\
    =&\int_{t_1}^{t_2} \int_0^{\rin(t)}(\eta-\frac{2}{3}\varepsilon)(\pr\uxj+\frac{2\uxj}{r})(\pr\zeta_1+\frac{2\zeta_1}{r})r^2drdt+\int_{t_1}^{t_2} \int_0^{\rin(t)}2\varepsilon(\pr\uxj\pr\zeta_1+\frac{2\uxj\zeta_1}{r^2})r^2dr,
\end{aligned}}
\end{equation}
thus
\begin{equation}\label{approxMomentum}
    \medmath{\begin{aligned}
        &\int_0^{\rin(t)}\rhoxj\uxj r^2\zeta_1dr|_{t_1}^{t_2}-\int_{t_1}^{t_2} \int_0^{\rin(t)}\rhoxj\uxj\pt\zeta_1r^2drdt\\&-\int_{t_1}^{t_2} \int_0^{\rin(t)}(\rhoxj)^\gamma(\pr\zeta_1+\frac{2\zeta_1}{r})r^2drdt-\int_{t_1}^{t_2} \int_0^{\rin(t)}\rhoxj(\uxj)^2\pr\zeta_1r^2drdt\\
        =&-\int_{t_1}^{t_2} \int_0^{\rin(t)}4\pi\rhoxj\zeta_1\int_0^r\rhoxj(s,t)s^2dsdrdt-\int_{t_1}^{t_2} \int_0^{\rin(t)}(\eta-\frac{2}{3}\varepsilon)(\pr\uxj+\frac{2\uxj}{r})(\pr\zeta_1+\frac{2\zeta_1}{r})r^2drdt\\
        &-\int_{t_1}^{t_2} \int_0^{\rin(t)}2\varepsilon(\pr\uxj\pr\zeta_1+\frac{2\uxj\zeta_1}{r^2})r^2dr.
    \end{aligned}}
\end{equation}
Since $\Dt(\rhoxj\uxj)\in L^2(0,T;W^{-1,1}_{loc}((0,r_{in}(t))))$, by Lemma \ref{App3},
\begin{equation}
    \lim_{\xi_j\rightarrow0}\int_0^{\rin(t)}\rhoxj\uxj\zeta_1r^2dr=\int_0^{\rin(t)}\rhoin u_{in}\zeta_1r^2dr\in C([0,T]).
\end{equation}
To send $\xi_j\rightarrow0$, we still need to check the gravitational term $\int_{t_1}^{t_2} \int_0^{\rin(t)}4\pi\rhoxj\zeta_1\int_0^r\rhoxj(s,t)s^2dsdrdt$. Notice the fact that $\rhoxj\rightarrow\rho_{in}$ strongly in $L^1(0,T;L^1((0,\rin(t)),r^2dr))$, the same convergence also holds for $\int_0^r\rhoxj(s,t)s^2ds$. Thus we can find a subsequence such that
\begin{equation}
    \begin{aligned}
        \rhoxj\rightarrow\rho_{in}\ a.e.\text{ in }&[0,\rin(t))\times[0,T],\\
        \int_0^r\rhoxj(s,t)s^2ds\rightarrow\int_0^r\rhoin(s,t)&s^2ds\ a.e.\text{ in }[0,\rin(t))\times[0,T].
    \end{aligned}
\end{equation}
Since
\begin{equation}
\begin{aligned}
    &\int_{t_1}^{t_2} \int_0^{\rin(t)}4\pi\rhoxj\zeta_1\int_0^r\rhoxj(s,t)s^2dsdrdt\\
   \le& C \int_{t_1}^{t_2} \int_0^{\rin(t)}4\pi\rhoxj r\int_0^r\rhoxj(s,t)s^2dsdrdt\le C(E_0,M,T),
\end{aligned}
\end{equation}
by dominated convergence theorem,
\begin{equation}
    \int_{t_1}^{t_2} \int_0^{\rin(t)}4\pi\rhoxj\zeta_1\int_0^r\rhoxj(s,t)s^2dsdrdt\rightarrow \int_{t_1}^{t_2} \int_0^{\rin(t)}4\pi\rhoin\zeta_1\int_0^r\rhoin(s,t)s^2dsdrdt.
\end{equation}
Now we send $\xi_j\rightarrow0$ in (\ref{approxMomentum}), we have
\begin{equation}\label{SolnIn}
    \medmath{\begin{aligned}
        &\int_0^{\rin(t)}\rhoin\uin r^2\zeta_1dr|_{t_1}^{t_2}-\int_{t_1}^{t_2} \int_0^{\rin(t)}\rhoin\uin\pt\zeta_1r^2drdt\\&-\int_{t_1}^{t_2} \int_0^{\rin(t)}(\rhoin)^\gamma(\pr\zeta_1+\frac{2\zeta_1}{r})r^2drdt-\int_{t_1}^{t_2} \int_0^{\rin(t)}\rhoin(\uin)^2\pr\zeta_1r^2drdt\\
        =&-\int_{t_1}^{t_2} \int_0^{\rin(t)}4\pi\rhoin\zeta_1\int_0^r\rhoin(s,t)s^2dsdrdt-\int_{t_1}^{t_2} \int_0^{\rin(t)}(\eta-\frac{2}{3}\varepsilon)(\pr\uin+\frac{2\uin}{r})(\pr\zeta_1+\frac{2\zeta_1}{r})r^2drdt\\
        &-\int_{t_1}^{t_2} \int_0^{\rin(t)}2\varepsilon(\pr\uin\pr\zeta_1+\frac{2\uin\zeta_1}{r^2})r^2drdt.
    \end{aligned}}
\end{equation}
On the other hand, notice that
\begin{equation}
    \ptau\uxj+4\pi(r^{\xi_j})^2\px(\rhoxj)^{\gamma}+\frac{x}{(r^{\xi_j})^2}=16\pi^2(r^{\xi_j})^2\px\left((\eta+\frac{4}{3}\varepsilon)\rhoxj\px((r^{\xi_j})^2\uxj)\right),
\end{equation}
using the coordinates change $\ptau=\pt+u_b\partial_{r},\ \px=\frac{1}{4\pi\rho_br^2}\pr$, where $x=M-4\pi\int_r^{a(t)}\rho_b(s,t)s^2ds$, we get
\begin{equation}\label{SolnBdary3}
\begin{aligned}
    &\pt(\rhoxj\uxj)+\pr\left(\rhoxj(\uxj)^2+(\rhoxj)^{\gamma}\right)+\frac{2\rhoxj(\uxj)^2}{r^{\xi_j}}+\frac{\rho_b}{(r^{\xi_j})^2}\left(M-\int_r^{a(t)}4\pi\rho_bs^2ds\right)\\
    &+(\rho_b-\rhoxj)\pt\uxj+(\rho_bu_b-\rhoxj\uxj)\pr\uxj=\frac{(r^{\xi_j})^2}{r^2}\pr\left((\eta+\frac{4}{3}\varepsilon)\frac{\rhoxj}{\rho_br^2}\pr((r^{\xi_j})^2\uxj)\right),
\end{aligned}
\end{equation}
multiply (\ref{SolnBdary3}) by $r^2\zeta_2$ and integrate by parts to get
\begin{equation}\label{SolnBdary4}
    \medmath{\begin{aligned}
        &\int_{r_{x_1}(t)}^{a(t)}\rhoxj\uxj r^2\zeta_2|_{t_1}^{t_2}
        -\int_{t_1}^{t_2}\int_{r_{x_1}(t)}^{a(t)}\rhoxj\uxj r^2\pt\zeta_2drdt\\
        &-\int_{t_1}^{t_2}\int_{r_{x_1}(t)}^{a(t)}(\rhoxj)^{\gamma}(\pr\zeta_2+\frac{2\zeta_2}{r})r^2+\rhoxj(\uxj)^2\pr\zeta_2r^2+\rhoxj(\uxj)^2(2r\zeta_2-\frac{2r^2\zeta_2}{r^{\xi_j}})drdt\\
        &+\int_{t_1}^{t_2}\int_{r_{x_1}(t)}^{a(t)}r^2\zeta_2(\rho_b-\rhoxj)\pt\uxj+r^2\zeta_2(\rho_bu_b-\rhoxj\uxj)\pr\uxj drdt\\
        =&-\int_{t_1}^{t_2}\int_{r_{x_1}(t)}^{a(t)}\frac{r^2\rho_b}{(r^{\xi_j})^2}\zeta_2\left(M-\int_r^{a(t)}4\pi\rho_bs^2ds\right)drdt\\
        &-\int_{t_1}^{t_2}\int_{r_{x_1}(t)}^{a(t)}(\eta+\frac{4}{3}\varepsilon)\frac{\rhoxj}{\rho_b}\left(\frac{(r^{\xi_j})^2}{r^2}\pr\uxj+\frac{2\rho_b}{\rhoxj r^{\xi_j}}\uxj\right)\left(\frac{(r^{\xi_j})^2}{r^2}\pr\zeta_2+\frac{2\rho_b}{\rhoxj r^{\xi_j}}\zeta_2\right)r^2drdt.
    \end{aligned}}
\end{equation}
For the third line in (\ref{SolnBdary4}), we can estimate as follow
\begin{equation}
    \begin{aligned}
        &\left|\int_{t_1}^{t_2}\int_{r_{x_1}(t)}^{a(t)}r^2\zeta_2(\rho_b-\rhoxj)\pt\uxj+r^2\zeta_2(\rho_bu_b-\rhoxj\uxj)\pr\uxj drdt\right|\\
        =&\left|\int_{t_1}^{t_2}\int_{x_1}^M\frac{1}{4\pi}\zeta_2(1-\frac{\rhoxj}{\rho_b})\ptau\uxj+r^2\zeta_2\rhoxj\px\uxj(u_b-\uxj)dxd\tau\right|\\
        \le&C\|\zeta_2(1-\frac{\rhoxj}{\rho_b})\|_{C([x_1,M]\times[0,T])}\int_0^T\int_{x_1}^M|\ptau\uxj|dxd\tau\\&+C\left(\int_0^T\int_{x_1}^M\rhoxj r^4(\px\uxj)^2dxd\tau\right)^{\frac{1}{2}}\|u_b-\uxj\|_{C([0,T];L^2([x_1,M]))}\rightarrow0\text{ as $\xi_j\rightarrow0$}.
    \end{aligned}
\end{equation}

Now, making use of the strong convergence of $\rhoxj$ and $\uxj$ in $[x_1,M]\times[0,T]$ and (\ref{ConvInLag}), sending $\xi_j\rightarrow0$, we have
\begin{equation}\label{SolnBdary5}
    \medmath{\begin{aligned}
        &\int_{r_{x_1}(t)}^{a(t)}\rho_bu_b r^2\zeta_2|_{t_1}^{t_2}
        -\int_{t_1}^{t_2}\int_{r_{x_1}(t)}^{a(t)}\rho_bu_b r^2\pt\zeta_2drdt-\int_{t_1}^{t_2}\int_{r_{x_1}(t)}^{a(t)}\rho_b^{\gamma}(\pr\zeta_2+\frac{2\zeta_2}{r})r^2+\rho_bu_b^2\pr\zeta_2r^2drdt\\
        =&-\int_{t_1}^{t_2}\int_{r_{x_1}(t)}^{a(t)}\rho_b\zeta_2\left(M-\int_r^{a(t)}4\pi\rho_bs^2ds\right)drdt\\
        &-\int_{t_1}^{t_2}\int_{r_{x_1}(t)}^{a(t)}(\eta+\frac{4}{3}\varepsilon)\left(\pr u_b+\frac{2u_b}{r}\right)\left(\pr\zeta_2+\frac{2\zeta_2}{r}\right)r^2drdt,
    \end{aligned}}
\end{equation}
integrate by parts, we get
\begin{equation}\label{SolnBdary6}
    \medmath{\begin{aligned}
        &\int_{r_{x_1}(t)}^{a(t)}\rho_bu_b r^2\zeta_2|_{t_1}^{t_2}
        -\int_{t_1}^{t_2}\int_{r_{x_1}(t)}^{a(t)}\rho_bu_b r^2\pt\zeta_2drdt-\int_{t_1}^{t_2}\int_{r_{x_1}(t)}^{a(t)}\rho_b^{\gamma}(\pr\zeta_2+\frac{2\zeta_2}{r})r^2+\rho_bu_b^2\pr\zeta_2r^2drdt\\
        =&-\int_{t_1}^{t_2}\int_{r_{x_1}(t)}^{a(t)}\rho_b\zeta_2\left(M-\int_r^{a(t)}4\pi\rho_bs^2ds\right)drdt-\int_{t_1}^{t_2}\int_{r_{x_1}(t)}^{a(t)}(\eta-\frac{2}{3}\varepsilon)\left(\pr u_b+\frac{2u_b}{r}\right)\left(\pr\zeta_2+\frac{2\zeta_2}{r}\right)r^2drdt\\
        &-\int_{t_1}^{t_2}\int_{r_{x_1}(t)}^{a(t)}2\varepsilon\left(\pr u_b\pr\zeta_2+\frac{2u_b\zeta_2}{r^2}\right)r^2drdt.
    \end{aligned}}
\end{equation}
Combining (\ref{SolnIn}) and (\ref{SolnBdary6}), we have
\begin{equation}
    \medmath{\begin{aligned}
        &\int_0^{a(t)}\rho u r^2\zeta dr|_{t_1}^{t_2}-\int_{t_1}^{t_2} \int_0^{a(t)}\rho u\pt\zeta r^2drdt\\&-\int_{t_1}^{t_2} \int_0^{a(t)}\rho^\gamma(\pr\zeta+\frac{2\zeta}{r})r^2drdt-\int_{t_1}^{t_2} \int_0^{a(t)}\rho u^2\pr\zeta r^2drdt\\
        =&-\int_{t_1}^{t_2} \int_0^{a(t)}4\pi\rho\zeta\int_0^r\rho(s,t)s^2dsdrdt-\int_{t_1}^{t_2} \int_0^{a(t)}(\eta-\frac{2}{3}\varepsilon)(\pr u+\frac{2u}{r})(\pr\zeta+\frac{2\zeta}{r})r^2drdt\\
        &-\int_{t_1}^{t_2} \int_0^{a(t)}2\varepsilon(\pr u\pr\zeta+\frac{2u\zeta}{r^2})r^2drdt,
    \end{aligned}}
\end{equation}
which implies
\begin{equation}\label{FinalRs2}    
        \begin{aligned}                 &\int_{\Omega_t}\rho\u\cdot\boldsymbol{\psi}\dbx|_{t_1}^{t_2}-\int_{t_1}^{t_2}\int_{\Omega_t}\rho\u\cdot\boldsymbol{\psi}+\rho\u\otimes\u:\nabla\boldsymbol{\psi}+\rho^{\gamma}div\boldsymbol{\psi}\dbx dt \\
        =&-\int_{t_1}^{t_2}\int_{\Omega_t}\varepsilon(\nabla\u+\nabla\u^T):\nabla\boldsymbol{\psi}+(\eta-\frac{2}{3}\varepsilon)div\u\cdot div\boldsymbol{\psi}\dbx dt-\int_{t_1}^{t_2}\int_{\Omega_t}\rho\nabla\varPhi\cdot\boldsymbol{\psi}\dbx dt.
        \end{aligned}        
    \end{equation}
    By a density argument, (\ref{FinalRs2}) holds for any $\boldsymbol{\psi}=(\psi_1,\psi_2,\psi_3)\in (C^1_c(\Omega_t\times[0,T]))^3$.
\subsection{Proof of Theroem \ref{Thm1: ExistenceOfGlobalWkSoln} and Theorem \ref{Thm2}}
    \begin{proof}[Proof of Theorem \ref{Thm1: ExistenceOfGlobalWkSoln}]
       Let $T>0$ and $\gamma\in(\frac{6}{5},\frac{4}{3}]$.
    Suppose $(\rho_0(r),u_0(r))$ are initial data satisfying (\ref{conditionsI}) and Assumption \ref{cond11}, we can construct a radially symmetric weak solution $(\rho,u,a)$ to the equations (\ref{nsp3})-(\ref{boundary conditions}) as (\ref{solnRad1}), which satisfies the equations in the sense of Definition \ref{WeakSolutionRadSymmetry}.
    \end{proof}
    \begin{proof}[Proof of Theorem \ref{Thm2}]
         Suppose $(\rho,u,a)$ is a weak solution to (\ref{nsp3})-(\ref{boundary conditions}) with initial data $(\rho_0,u_0)$ satisfying the conditions in Theorem \ref{Thm1: ExistenceOfGlobalWkSoln}, then (\ref{Thm2: 1}) follows from Lemma \ref{HigherEnergyEst3}, \ref{HigherEnergyEst4}, \ref{StrConvOfrho}, \ref{StrConvOfu} and Remark \ref{HigherEnEst5}, \ref{StrConvOfu: Int}; (\ref{Thm2: 2}) and (\ref{Thm2: 3}) follows from Lemma \ref{HigherEnergyEst2}, \ref{HigherEnergyEst3}, \ref{HigherEnergyEst4}.
    \end{proof}
    \section{Expanding rate of the free boundary}
\label{Expanding rate of the free boundary}
In this section, the goal is to prove the algebraic expanding rate of the free boundary.

We assume $(\rho,u,a)$ is a radially symmetric strong solution to the following radially symmetric 3D Navier-Stokes-Poisson equations
\begin{equation}\label{nspViscDen}
    \begin{cases}
        \partial_t\rho+\partial_r(\rho u)+\frac{2\rho u}{r}=0,\\
        \partial_t(\rho u)+\partial_r(\rho u^2+\rho^{\gamma})+\frac{2\rho u^2}{r}=\partial_r((\eta+\frac{4}{3}\varepsilon))\rho^{\alpha}(\partial_r u+\frac{2u}{r}))-2\varepsilon\pr\rho^{\alpha}\cdot\frac{2u}{r}-\frac{4\pi\rho}{r^2}\int_0^r\rho s^2 ds
    \end{cases}
\end{equation}
 under the assumptions of Corollary \ref{cor1}, with $0\le\alpha\le\gamma$ and $\gamma\in(\frac{6}{5},\frac{4}{3})$. In particular, when $\alpha=0$, the equations are just (\ref{nsp3}). In this case, we also discuss the situation for $\gamma=\frac{4}{3}$.

Let 
\begin{equation}
    \sigma(a(t),t)=0,\ \rho(a(t),t)=0,\ u(0,t)=0,
\end{equation}
where $\sigma:=\rho^{\gamma}-(\eta+\frac{4}{3}\varepsilon)\rho^{\alpha}(\partial_ru+\frac{2u}{r})$.

Suppose $\rho_0\in L^1([0,a(t)),r^2dr)\cap L^{\gamma}([0,a(t)),r^2dr)$, $\sqrt{\rho_0}u_0\in L^2([0,a(t)),r^2dr)$ and $(\rho_0,u_0)\in\mathcal{I}$, then, for $\alpha>0$, we have the following basic energy estimate
\begin{equation}\label{BasicEnEstVsc}
    \begin{aligned}
        \dt E(t)=&\dt\left(4\pi\int_0^{a(t)}\frac{1}{2}\rho u^2 r^2+\frac{1}{\gamma-1}\rho^{\gamma} r^2dr-4\pi\int_0^{a(t)}4\pi\rho r\left(\int_0^r\rho s^2ds\right)dr\right)\\
        =&-4\pi \int_0^{a(t)}\eta\rho^{\alpha}(\partial_ru+\frac{2u}{r})^2r^2+\frac{4}{3}\varepsilon\rho^{\alpha}(\partial_ru-\frac{u}{r})^2 r^2dr.
    \end{aligned}
\end{equation}
Also, similar to Lemma \ref{LinLemma} and Corollary \ref{cor1}, we can control the gravitational energy by the potential energy of the basic energy. And under the flow of the equations (\ref{nspViscDen}), $Q(\rho(t))\ge\Lambda$ for any $t>0$ for some $\Lambda>0$.

To derive the expanding rate of the free boundary, consider
\begin{equation}
    H(t)=\frac{1}{2}\intb\rho|\bx|^2\dbx=2\pi\int_0^{a(t)}\rho r^4dr.
\end{equation}
Take the time derivative, we have
\begin{equation}
\begin{aligned}
    H'(t)&=2\pi\partial_t\int_0^{a(t)}\rho r^4dr=2\pi\int_0^{a(t)}\partial_t\rho r^4dr\\&=2\pi\int_0^{a(t)}-\partial_r(\rho u)r^4-\frac{2\rho u}{r}r^4dr\\
    &=2\pi\int_0^{a(t)}\rho u\cdot4r^3-2\rho ur^3dr=4\pi\int_0^{a(t)}\rho ur^3dr.
\end{aligned}
\end{equation}
Take one more time derivative, for $\alpha=0$, we have
\begin{equation}
    \medmath{\begin{aligned}
        H''(t)=&4\pi\partial_t\int_0^{a(t)}\rho ur^3dr=4\pi\int_0^{a(t)}\partial_t(\rho u)r^3dr\\
        =&4\pi\int_0^{a(t)}\left(-\partial_r(\rho u^2+\rho^{\gamma})r^3-2\rho u^2r^2+r^3\partial_r((\eta+\frac{4}{3}\varepsilon)(\partial_ru+\frac{2u}{r}))-4\pi\rho r\int_0^r\rho s^2ds\right)dr\\
        =&4\pi\int_0^{a(t)}\left(3(\rho u^2+\rho^{\gamma})r^2-2\rho u^2r^2-3r^2(\eta+\frac{4}{3}\varepsilon)(\partial_ru+\frac{2u}{r})-4\pi\rho r\int_0^r\rho s^2ds\right)dr\\
        =&4\pi\int_0^{a(t)}\rho u^2r^2dr+\left(4\pi\int_0^{a(t)}3\rho^{\gamma}r^2dr-4\pi\int_0^{a(t)}4\pi\rho r\int_0^r\rho s^2dsdr\right)-12\pi\int_0^{a(t)}(\eta+\frac{4}{3}\varepsilon)\partial_r(r^2u)dr,
        \end{aligned}}
       \end{equation}
and for $\alpha>0$, we have
\begin{equation}
    \medmath{\begin{aligned}
        H''(t)=&4\pi\int_0^{a(t)}\left(-\partial_r(\rho u^2+\rho^{\gamma})r^3-2\rho u^2r^2+r^3\partial_r((\eta+\frac{4}{3}\varepsilon)\rho^{\alpha}(\partial_ru+\frac{2u}{r}))-2\varepsilon\pr\rho^{\alpha}\cdot2ur^2-4\pi\rho r\int_0^r\rho s^2ds\right)dr\\
        =&4\pi\int_0^{a(t)}\left(3(\rho u^2+\rho^{\gamma})r^2-2\rho u^2r^2-3r^2(\eta+\frac{4}{3}\varepsilon)\rho^{\alpha}(\partial_ru+\frac{2u}{r})+4\varepsilon\rho^{\alpha}(\pr u+\frac{2u}{r})r^2-4\pi\rho r\int_0^r\rho s^2ds\right)dr\\
        =&4\pi\int_0^{a(t)}\rho u^2r^2dr+\left(4\pi\int_0^{a(t)}3\rho^{\gamma}r^2dr-4\pi\int_0^{a(t)}4\pi\rho r\int_0^r\rho s^2dsdr\right)-12\pi\int_0^{a(t)}\eta\rho^{\alpha}\partial_r(r^2u)dr,
        \end{aligned}}
       \end{equation}
       which gives
       \begin{equation}\label{H''}
           H''(t)=\begin{cases}
                    4\pi\int_0^{a(t)}\rho u^2r^2dr+Q(\rho(t))-12\pi\int_0^{a(t)}(\eta+\frac{4}{3}\varepsilon)\partial_r(r^2u)dr\ (\alpha=0),
                         \\
                    4\pi\int_0^{a(t)}\rho u^2r^2dr+Q(\rho(t))-12\pi\int_0^{a(t)}\eta\rho^{\alpha}\partial_r(r^2u)dr\ (\alpha>0).
                \end{cases}
        \end{equation}
\subsection{Proof of Theorem \ref{Thm3}}
\begin{proof}[Proof of Theorem \ref{Thm3}]
    Suppose $\alpha=0$. When $\gamma\in(\frac{6}{5},\frac{4}{3})$, we have
\begin{equation}
    \Lambda\le4\pi\int_0^{a(t)}\rho u^2r^2dr+Q(\rho(t))= H''(t)+12\pi(\eta+\frac{4}{3}\varepsilon) a^2(t)a'(t),
\end{equation}
integrate it over $[0,t]$, we have
\begin{equation}
\begin{aligned}
    \Lambda t+C_0\le& H'(t)+4\pi(\eta+\frac{4}{3}\varepsilon)(a(t))^3\\
    \le&4\pi\left(\int_0^{a(t)}\rho r^4dr\right)^{\frac{1}{2}}\left(\int_0^{a(t)}\rho u^2r^2dr\right)^{\frac{1}{2}}+4\pi(\eta+\frac{4}{3}\varepsilon)(a(t))^3\\
    \le&4\pi (M(\rho))^{\frac{1}{2}}E_0a(t)+4\pi(\eta+\frac{4}{3}\varepsilon)(a(t))^3,
\end{aligned}
\end{equation}
which implies $a(t)\gtrsim(\frac{t}{\eta+\frac{4}{3}\varepsilon})^{\frac{1}{3}}$. On the other hand,
\begin{equation}
    H''(t)+12\pi(\eta+\frac{4}{3}\varepsilon) a^2(t)a'(t)\le 2E_0,
\end{equation}
which gives
\begin{equation}
    -4\pi(M(\rho))^{\frac{1}{2}}E_0a(t)+4\pi(\eta+\frac{4}{3}\varepsilon)(a(t))^3\le 2E_0t,
\end{equation}
which implies $a(t)\lesssim(\frac{t}{\eta+\frac{4}{3}\varepsilon})^{\frac{1}{3}}$. As a result, $a(t)\approx(\frac{t}{\eta+\frac{4}{3}\varepsilon})^{\frac{1}{3}}$.

When $\gamma=\frac{4}{3}$, if $M(\rho)<M_{ch}:=\left(\frac{6}{C_{min}}\right)^{3/2}$, then we have:
\begin{equation}\label{Qrho4/3}
\begin{aligned}
    Q(\rho(t))=&\intb 3\rho^{\frac{4}{3}}\dbx-\frac{1}{2}\iint_{\mathbb{R}^3\times\mathbb{R}^3}\frac{\rho(x)\rho(y)}{|x-y|}\dbx\dby\\
    \ge&3\left(1-\frac{\left(M(\rho)\right)^{2/3}}{\left(M_{ch}\right)^{2/3}}\right)\intb \rho^{\frac{4}{3}}\dbx.
\end{aligned}
\end{equation}
Consider the functional (as in \cite{KL}):
\begin{equation}
\begin{aligned}
    I(t)=&\frac{1}{2}\int_0^{a(t)}4\pi\rho(r-(1+t)u)^2 r^2dr+3(1+t)^2\int_0^{a(t)}4\pi \rho^{\frac{4}{3}}r^2dr\\
    &-(1+t)^24\pi\int_0^{a(t)}4\pi\rho r\int_0^r\rho s^2dsdr\\
    =&H(t)-(1+t)H'(t)+(1+t)^2E(t),
\end{aligned}
\end{equation}
then:
\begin{equation}
\begin{aligned}
    I'(t)=&-(1+t)H''(t)+2(1+t)E(t)+(1+t)^2E'(t)\\
    \le&-(1+t)H''(t)+2(1+t)E(t)\\
    =&(1+t)\left(3\int_0^{a(t)}4\pi\rho^{\frac{4}{3}} r^2dr-4\pi\int_0^{a(t)}4\pi\rho r\int_0^r\rho s^2dsdr+12\pi\int_0^{a(t)}(\eta+\frac{4}{3}\varepsilon)\pr(r^2u)dr\right)\\
    \le&\frac{1}{1+t}I(t)+(1+t)4(\eta+\frac{4}{3}\varepsilon)\pi(a^3(t))',
\end{aligned}
\end{equation}
which implies
\begin{equation*}
    \left(\frac{I(t)}{1+t}\right)'\le4(\eta+\frac{4}{3}\varepsilon)\pi(a^3(t))',
\end{equation*}
so:
\begin{equation*}
    I(t)\lesssim(\eta+\frac{4}{3}\varepsilon)(1+t)a^3(t)+I(0)(1+t).
\end{equation*}
Combining with (\ref{Qrho4/3}), we have:
\begin{equation}\label{internal energy est}
    \int_0^{a(t)}\rho^{\frac{4}{3}}r^2dr\lesssim(\eta+\frac{4}{3}\varepsilon)(1+t)^{-1}a^3(t)+I(0)(1+t)^{-1}.
\end{equation}
By H\"older's Inequality and (\ref{internal energy est}),
\begin{equation*}
    \begin{aligned}
        M(\rho)=&\int_0^{a(t)}4\pi\rho r^2dr\lesssim\left(\int_0^{a(t)}\rho^{\frac{4}{3}}r^2dr\right)^{\frac{3}{4}}a^{\frac{3}{4}}(t)\\
        \lesssim&\left((\eta+\frac{4}{3}\varepsilon)(1+t)^{-1}a^3(t)+I(0)(1+t)^{-1}\right)^{\frac{3}{4}}a^{\frac{3}{4}}(t),
    \end{aligned}
\end{equation*}
then we get $a(t)\gtrsim (\frac{1+t}{\eta+\frac{4}{3}\varepsilon})^{\frac{1}{4}}$. By a similar computation as in $\gamma\in(\frac{6}{5},\frac{4}{3})$, we get $a(t)\lesssim(\frac{t}{\eta+\frac{4}{3}\varepsilon})^{\frac{1}{3}}$.
\end{proof}
\subsection{Proof of Remark \ref{DenDependVisc}}\label{Proof of expanding rate for density-dependent viscosity}
In this subsection, we prove the claims in Remark \ref{DenDependVisc}.

\textbf{Case 1: $\eta=0$}

It follows from (\ref{H''}) that
\begin{equation}
    \Lambda\le H''(t),
\end{equation}
integrate $H''(t)$ twice, we get
\begin{equation}
    \frac{1}{2}\Lambda t^2+Ct+D\lesssim2\pi\int_0^{a(t)}\rho r^4 dr\lesssim M(\rho)a^2(t),
\end{equation}
which indicates $a(t)\gtrsim t$.

\textbf{Case 2: $\eta\not=0$}

First, we consider $0<\alpha<1$. For the third term on the right hand side of (\ref{H''}), we substitute $\partial_r(r^2u)$ by $-\rho^{-1}r^2(\partial_t\rho+u\partial_r\rho)$ and integrate by parts
\begin{equation}
    \begin{aligned}
    -12\pi\int_0^{a(t)}\eta\rho^{\alpha}\partial_r(r^2u)dr&=12\pi\int_0^{a(t)}\eta\rho^{\alpha-1}r^2(\partial_t\rho+u\partial_r\rho)dr\\
    &=\frac{12\pi\eta}{\alpha}\partial_t\int_0^{a(t)}\rho^{\alpha}r^2dr+\frac{12\pi\eta}{\alpha}\int_0^{a(t)}\partial_r\rho^{\alpha}(r^2u)dr\\
    &=\frac{12\pi\eta}{\alpha}\partial_t\int_0^{a(t)}\rho^{\alpha}r^2dr-\frac{12\pi\eta}{\alpha}\int_0^{a(t)}\rho^{\alpha}\partial_r(r^2u)dr,
    \end{aligned}
\end{equation}
which implies
\begin{equation}
    12\pi\eta(\frac{1}{\alpha}-1)\int_0^{a(t)}\rho^{\alpha}\partial_r(r^2u)dr=\frac{12\pi\eta}{\alpha}\partial_t\int_0^{a(t)}\rho^{\alpha}r^2dr.
\end{equation}
So we have
\begin{equation}
    H''(t)+12\pi\eta\alpha^{-1}(\frac{1}{\alpha}-1)^{-1}\partial_t\int_0^{a(t)}\rho^{\alpha}r^2dr=4\pi\int_0^{a(t)}\rho u^2r^2dr+Q(\rho(t)),
\end{equation}
which gives
\begin{equation}
    \Lambda\le H''(t)+12\pi\eta\alpha^{-1}(\frac{1}{\alpha}-1)^{-1}\partial_t\int_0^{a(t)}\rho^{\alpha}r^2dr.
\end{equation}
Integrate it in $t$ and apply H\"older to $H'(t)$ and $\int_0^{a(t)}\rho^{\alpha}r^2dr$,
\begin{equation}
\begin{aligned}
    \Lambda t+C_0&\le H'(t)+12\pi\eta\alpha^{-1}(\frac{1}{\alpha}-1)^{-1}\int_0^{a(t)}\rho^{\alpha}r^2dr\\
    &\le4\pi\left(\int_0^{a(t)}\rho r^4dr\right)^{\frac{1}{2}}\left(\int_0^{a(t)}\rho u^2r^2dr\right)^{\frac{1}{2}}+\frac{12\pi\eta}{1-\alpha}\left(\int_0^{a(t)}\rho r^2dr\right)^{\alpha}\left(\int_0^{a(t)}r^2dr\right)^{1-\alpha}\\
    &\le4\pi (M(\rho))^{\frac{1}{2}}E_0a(t)+\frac{12\pi\eta}{1-\alpha}(M(\rho))^{\alpha}a^{3(1-\alpha)}(t).
\end{aligned}
\end{equation}
As a result, when $3(1-\alpha)\ge1$, i.e., $0\le\alpha\le\frac{2}{3}$, $a(t)\gtrsim (\frac{t}{\eta})^{\frac{1}{3(1-\alpha)}}$; when $3(1-\alpha)<1$, i.e., $\frac{2}{3}<\alpha<1$, $a(t)\gtrsim t$.

For $1\le\alpha\le\gamma$, it follows from (\ref{H''}) that
\begin{equation}
    H''(t)=4\pi\int_0^{a(t)}\rho u^2r^2dr+Q(\rho(t))-12\pi\int_0^{a(t)}\eta\rho^{\alpha}(\partial_ru+\frac{2u}{r})r^2dr,
\end{equation}
which implies
\begin{equation}
\begin{aligned}
    \Lambda\le& H''(t)+12\pi\int_0^{a(t)}\eta\rho^{\alpha}(\partial_ru+\frac{2u}{r})r^2dr\\
    \le&H''(t)+\frac{\Lambda}{2D}\int_0^{a(t)}4\pi\rho^{\alpha}  r^2dr+CD\int_0^{a(t)}\rho^{\alpha} (\partial_ru+\frac{2u}{r})^2r^2dr\\
    \le&H''(t)+\frac{\Lambda}{2D}\left(\int_0^{a(t)}4\pi\rho  r^2dr\right)^{\alpha\frac{\alpha-1}{\gamma-1}}\left(\int_0^{a(t)}4\pi\rho^{\gamma}  r^2dr\right)^{\alpha\frac{\gamma-\alpha}{\gamma-1}}+CD\int_0^{a(t)}\rho^{\alpha} (\partial_ru+\frac{2u}{r})^2r^2dr,
    \end{aligned}
\end{equation}
where $D>0$ can be chosen properly so that
\begin{equation}
    \frac{1}{2}\Lambda\le H''(t)+C\int_0^{a(t)}\rho^{\alpha} (\partial_ru+\frac{2u}{r})^2r^2dr,
\end{equation}
integrate it with respect to $t$ and apply H\"older inequality, we get
\begin{equation}
    \frac{1}{2}\Lambda t+C_0\le4\pi(M(\rho))^{\frac{1}{2}}E_0a(t)+CE_0,
\end{equation}
which gives $a(t)\gtrsim t$.  

    \section*{Acknowledgments}
    The author was supported in part by the NSF grant DMS-2306910.
    \section*{Appendix}
   \appendix
    \section{Functional analysis}
\label{Functional analysis}
\begin{mydef}[Compatible pair]\label{App0}\cite{ACDE}
    Consider a family of Banach spaces $\{X(t)\}_{t\in[0,T]}\ (X_0:=X(t))$ with the bounded linear, invertible map
\begin{equation}
    \varphi_t:X_0\rightarrow X(t)
\end{equation}
whose inverse is
\begin{equation}
    \varphi_{-t}:X(t)\rightarrow X_0.
\end{equation}
Assume further that 
\begin{enumerate}
    \item[I] $\varphi_0=id_{X_0}$;
    \item[II] there exists a constant $C_X$ independent of $t\in[0,T]$ such that
\begin{equation*}
    \|\varphi_tu\|_{X(t)}\le C_X\|u\|_{X_0},\ \forall u\in X_0;\ \|\varphi_{-t}u\|_{X_0}\le C_X\|u\|_{X(t)},\ \forall u\in X(t);
\end{equation*}
    \item[III] for all $u\in X_0$, the map $t\rightarrow\|\varphi_tu\|_{X(t)}$ is measurable.
\end{enumerate}
We say the pair $(X(t),\varphi_t)_t$ is compatible.

\end{mydef}
\begin{mydef}[The space $L^p_X$]\cite{ACDE}For $1\le p\le\infty$, define
\begin{equation}
    L^p_X:=\{u:[0,T]\rightarrow X(t),\ t\mapsto(u(t),t):\ \varphi_{-(\cdot)}u(\cdot)\in L^p(0,T;X_0)\}
\end{equation}
    with norm 
    \begin{equation}
        \|u\|_{L^p_X}:=\begin{cases}
            \left(\int_0^T\|u(t)\|^p_{X(t)}dt\right)^{\frac{1}{p}}\ (1\le p<\infty),\\
            ess\sup_{t\in[0,T]}\|u(t)\|_{X(t)}.
        \end{cases}
    \end{equation}
\end{mydef}
\begin{mydef}[The space $\mathbb{W}^{p,q}(X,Y)$]\cite{ACDE}Suppose $(X(t),\varphi^X_t)_t$ and $(Y(t),\varphi^Y_t)_t$ are compatible pairs satisfy $X(t)\hookrightarrow Y(t)$ continuously for all $t\in[0,T]$. Consider the space
\begin{equation}
    W^{p,q}(X_0,Y_0):=\{u\in L^p(0,T;X_0):u'\in L^q(0,T;Y_0)\},
\end{equation}
where $u'$ is the weak derivative with respective to $t$. We define the following space
\begin{equation}
    \mathbb{W}^{p,q}(X,Y):=\{u\in L^p_X:\mathcal{D}_tu\in L^q_Y\}
\end{equation}
with norm $\|u\|_{\mathbb{W}^{p,q}(X,Y)}:=\|u\|_{L^p_x}+\|\mathcal{D}_tu\|_{L^q_Y}$, where $\mathcal{D}_tu:=\varphi_t^X((\varphi_{-t}^Xu)')$. We say that the spaces $W^{p,q}(X_0,Y_0)$ and $\mathbb{W}^{p,q}(X,Y)$ are equivalent if
\begin{equation}
    v\in \mathbb{W}^{p,q}(X,Y)\text{ iff }\varphi^X_{-(\cdot)}v(\cdot)\in W^{p,q}(X_0,Y_0),
\end{equation}
and
\begin{equation}
    C_1\|\varphi^X_{-(\cdot)}v(\cdot)\|_{W^{p,q}(X_0,Y_0)}\le\|v\|_{\mathbb{W}^{p,q}(X,Y)}\le C_2\|\varphi^X_{-(\cdot)}v(\cdot)\|_{W^{p,q}(X_0,Y_0)}.
\end{equation}
\end{mydef}
\begin{lem}[Aubin-Lions lemma in evolving spaces]\label{App1}\cite[Theorem 5.2]{ACDE}Suppose $(X(t),\varphi^X_t)_t$, $(Y(t),\varphi^Y_t)_t$ and $(Z(t),\varphi^Z_t)_t$are compatible pairs with $X(t)\hookrightarrow Y(t)$, $X_0\hookrightarrow\hookrightarrow Z_0\hookrightarrow Y_0$ and $\varphi_t^Z|_{X_0}=\varphi_t^X$. Suppose $W^{p,q}(X_0,Y_0)$ and $\mathbb{W}^{p,q}(X,Y)$ are equivalent. Then for any $p\in(1,\infty)$ and $q\in[1,\infty]$, $\mathbb{W}^{p,q}(X,Y)\hookrightarrow\hookrightarrow L^p_Z$.
\end{lem}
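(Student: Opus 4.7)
The plan is to reduce the problem to the classical Aubin--Lions lemma on the fixed reference spaces by pulling back along the flow maps $\varphi_t^X$, $\varphi_t^Y$, $\varphi_t^Z$, and then push the resulting strong convergence back to the evolving spaces using the uniform boundedness conditions in Definition~\ref{App0}.

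Suppose $\{v_n\}$ is a bounded sequence in $\mathbb{W}^{p,q}(X,Y)$. Set $u_n(t) := \varphi^X_{-t}v_n(t)$. By the equivalence hypothesis, $\{u_n\}$ is bounded in $W^{p,q}(X_0,Y_0)$, i.e.\ bounded in $L^p(0,T;X_0)$ with $u_n'$ bounded in $L^q(0,T;Y_0)$. Since the chain $X_0 \hookrightarrow\hookrightarrow Z_0 \hookrightarrow Y_0$ is on \emph{fixed} Banach spaces, the classical Aubin--Lions--Simon lemma applies for any $p \in (1,\infty)$, $q \in [1,\infty]$, producing a subsequence (still labelled $u_n$) and a limit $u \in L^p(0,T;Z_0)$ with $u_n \to u$ strongly in $L^p(0,T;Z_0)$.

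Next I define $v(t) := \varphi^Z_t u(t)$ and show $v_{n} \to v$ in $L^p_Z$. Because $\varphi^Z_t|_{X_0} = \varphi^X_t$ and $u_n(t)$ takes values in $X_0 \subset Z_0$, we have $v_n(t) = \varphi^X_t u_n(t) = \varphi^Z_t u_n(t)$ in $Z(t)$. The uniform bound
\[
\|\varphi^Z_t w\|_{Z(t)} \le C_Z \|w\|_{Z_0}, \qquad \forall w \in Z_0,
\]
from Definition~\ref{App0}(II) gives
\[
\|v_n - v\|_{L^p_Z}^p = \int_0^T \|\varphi^Z_t(u_n(t) - u(t))\|_{Z(t)}^p\,dt \le C_Z^p \int_0^T \|u_n(t) - u(t)\|_{Z_0}^p\,dt \longrightarrow 0,
\]
so the convergence follows. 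Measurability of the map $t \mapsto \|v_n(t)-v(t)\|_{Z(t)}$, required for the above integral to make sense, is exactly Definition~\ref{App0}(III) applied to the difference in $Z_0$ together with the uniform continuity estimate.

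The main subtlety, and the step where the evolving-spaces framework differs from the classical one, is the passage from $u_n \to u$ in $L^p(0,T;Z_0)$ to $v_n \to v$ in $L^p_Z$: this requires the compatibility condition $\varphi^Z_t|_{X_0} = \varphi^X_t$ so that the pull-back of $v_n$ is unambiguous, together with the equivalence of $W^{p,q}(X_0,Y_0)$ and $\mathbb{W}^{p,q}(X,Y)$ so that $\mathcal{D}_t$-bounds translate into genuine weak-derivative bounds on the pulled-back sequence. Once those two structural hypotheses are in force, the core compactness is inherited immediately from the stationary Aubin--Lions lemma, and no new compactness argument in the moving-domain setting is needed. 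Verifying the hypotheses of Lemma~\ref{App1} in the concrete application (with $X_0 = H^1$, $Y_0 = W^{-1,p}$, $Z_0 = L^p$ on $[0,r_{in,0}]$ and $\varphi_t$ induced by the scaling $y \mapsto r_{in}(t)y/r_{in,0}$) then reduces to checking uniform bounds on the Jacobian $r_{in}(t)/r_{in,0}$ and its derivative, which follows from $r_{in}(t) \in C^1([0,T])$ and the strict positivity of $r_{in}(t)$.
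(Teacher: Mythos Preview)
Your argument is correct and is precisely the natural reduction: pull back via $\varphi^X_{-t}$ to the fixed triple $X_0\hookrightarrow\hookrightarrow Z_0\hookrightarrow Y_0$, apply the classical Aubin--Lions--Simon lemma there, and push forward via $\varphi^Z_t$ using the uniform bounds in Definition~\ref{App0}(II) together with the compatibility $\varphi^Z_t|_{X_0}=\varphi^X_t$. Note, however, that the paper does not supply its own proof of this lemma; it is simply quoted from \cite[Theorem~5.2]{ACDE}, whose proof follows the same reduction strategy you outline.
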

\begin{lem}\label{App2}\cite{LionsII}
    Let $g_n,\ h_n$ converge weakly to $g,\ h$ respectively in $L^{p_1}(0,T;L^{p_2}(\Omega))$, $L^{q_1}(0,T;L^{q_2}(\Omega))$, where $1\le p_1,p_2\le+\infty$,
    $$\frac{1}{p_1}+\frac{1}{q_1}=\frac{1}{p_2}+\frac{1}{q_2}=1.$$
    We assume in addition that $\frac{\partial g_n}{\partial t}$ is bounded in $L^1(0,T;W^{-m,1}(\Omega))$ for some $m\ge0$ independent of $n$, and 
    \begin{equation*}
        \|h_n-h_n(\cdot+\xi,t)\|_{L^{q_1}(0,T;L^{q_2}(\Omega))}\rightarrow0
    \end{equation*}
    as $|\xi|\rightarrow0$ uniformly in $n$. Then $g_nh_n\rightharpoonup gh$ in the sense of distributions on $\Omega\times(0,T)$.
\end{lem}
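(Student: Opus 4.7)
The plan is to prove this Lions-type compensated compactness result by a spatial mollification argument that trades a weak$\times$weak convergence for a strong$\times$weak one. The spatial translation hypothesis on $h_n$ will control the error introduced by mollifying $h_n$, while the time-derivative bound on $g_n$ will supply compactness of a spatially smoothed version of $g_n$ via Aubin--Lions.

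First I would fix a nonnegative symmetric spatial mollifier $\rho_\epsilon$ with $\int\rho_\epsilon = 1$ and, for any test function $\phi\in C_c^\infty(\Omega\times(0,T))$, split
\[ \int_0^T\!\!\int_\Omega g_n h_n \phi \,dxdt = \int_0^T\!\!\int_\Omega g_n (h_n-h_n*\rho_\epsilon)\phi\,dxdt + \int_0^T\!\!\int_\Omega g_n(h_n*\rho_\epsilon)\phi\,dxdt. \]
The first piece is controlled by H\"older's inequality; writing $h_n-h_n*\rho_\epsilon=\int\rho_\epsilon(\xi)\bigl(h_n(\cdot,t)-h_n(\cdot-\xi,t)\bigr)\,d\xi$ and invoking the uniform spatial translation hypothesis shows this contribution tends to $0$ as $\epsilon\to 0$, uniformly in $n$.

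For the second piece I would move the convolution onto $g_n\phi$ (using symmetry of $\rho_\epsilon$), obtaining $\int [(g_n\phi)*\rho_\epsilon]\, h_n\,dxdt$. Set $G_n^\epsilon:=(g_n\phi)*\rho_\epsilon$; since $\phi$ is compactly supported, $G_n^\epsilon$ is bounded in $L^{p_1}(0,T;W^{k,p_2}(\Omega))$ for every $k$, with compact spatial support, while $\partial_t G_n^\epsilon=\partial_t(g_n\phi)*\rho_\epsilon$ inherits the $L^1(0,T;W^{-m,1})$ bound on $\partial_t g_n$ and is regularized by the convolution into $L^1(0,T;L^{p_2}_{\mathrm{loc}})$. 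An Aubin--Lions argument on $W^{k,p_2}\hookrightarrow\hookrightarrow L^{p_2}$ (over the compact support of $\phi$) then yields strong convergence $G_n^\epsilon \to (g\phi)*\rho_\epsilon$ in $L^{p_1}(0,T;L^{p_2}(\Omega))$ along a subsequence. Pairing this strong convergence with the weak convergence of $h_n$ in $L^{q_1}(0,T;L^{q_2}(\Omega))$ gives $\int G_n^\epsilon h_n\,dxdt \to \int g(h*\rho_\epsilon)\phi\,dxdt$. Combining the two estimates and sending $\epsilon\to 0$ (using $h*\rho_\epsilon\to h$ in $L^{q_1}(L^{q_2})$, which itself follows from the translation condition passed to the limit) closes the argument, and the uniqueness of the limit upgrades the subsequential convergence to that of the full sequence.

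The main obstacle is the functional-analytic setup for Aubin--Lions so that it covers the full range of exponents $1\le p_i,q_i\le\infty$ allowed in the statement. When $p_1=\infty$, interpolation-based compactness is unavailable and one must instead argue via time equicontinuity: the $L^1$ time-derivative bound on $G_n^\epsilon$ combined with $W^{k,p_2}\hookrightarrow\hookrightarrow L^{p_2}$ on the compact spatial support of $\phi$ makes $\{G_n^\epsilon\}$ equicontinuous as an $L^{p_2}$-valued function of $t$, and Arzel\`a--Ascoli yields compactness in $C([0,T];L^{p_2})$. The case $q_1=\infty$ and the possible unboundedness of $\Omega$ are both handled by localizing to $\mathrm{supp}\,\phi$ before applying the compactness.
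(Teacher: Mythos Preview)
The paper does not actually prove this lemma; it is stated in the appendix with a citation to Lions \cite{LionsII} and no proof is given. Your proposal recovers essentially the standard argument from Lions' book: spatial mollification of $h_n$ to trade the weak$\times$weak product for a strong$\times$weak one, with the translation hypothesis controlling the mollification error and Aubin--Lions (or Arzel\`a--Ascoli in the endpoint case) supplying the needed strong convergence of the smoothed $g_n\phi$. The outline is correct and complete, and your attention to the endpoint exponents and to localizing via $\mathrm{supp}\,\phi$ is appropriate.
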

\begin{lem}\label{App3}\cite{LionsI}Let $X$ be a separable reflexive Banach space and $\{f_n\}$ be bounded in $\Linf(0,T;X)$ for some $T>0$. Assume $f_n\in C([0,T];Y)$ where $Y$ is a Banach space such that $X\hookrightarrow Y$, $Y'$ is separable and dense in $X'$. Furthermore, we assume that for all $\varphi\in Y'$, $\langle\varphi,f_n(t)\rangle_{Y'\times Y}$ is uniformly continuous in $t\in[0,T]$ uniformly in $n\ge1$. Then $\{f_n\}$ is relatively compact in $C([0,T];X-w)$.
\end{lem}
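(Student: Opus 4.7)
The plan is to adapt the classical Arzelà--Ascoli theorem to the weak topology on $X$. The guiding criterion is that $\{f_n\}\subset C([0,T];X-w)$ is relatively compact precisely when (a) for every $t\in[0,T]$ the slice $\{f_n(t)\}$ is relatively compact in $X-w$, and (b) for every $\varphi\in X'$ the scalar family $\{\langle\varphi,f_n(\cdot)\rangle_{X'\times X}\}$ is equicontinuous on $[0,T]$. Condition (a) is handed to us: the $\Linf(0,T;X)$-bound gives a uniform estimate $\|f_n(t)\|_X\le M$, and bounded sets in the reflexive Banach space $X$ are weakly relatively compact by Banach--Alaoglu.

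For condition (b) I would exploit the density of $Y'$ in $X'$. Given $\varphi\in X'$ and $\epsilon>0$, pick $\psi\in Y'$ with $\|\varphi-\psi\|_{X'}<\epsilon/(3M)$ and split
\begin{equation*}
|\langle\varphi,f_n(t)-f_n(s)\rangle|\le\|\varphi-\psi\|_{X'}\bigl(\|f_n(t)\|_X+\|f_n(s)\|_X\bigr)+|\langle\psi,f_n(t)-f_n(s)\rangle|.
\end{equation*}
The first summand is at most $2\epsilon/3$; the second is less than $\epsilon/3$ for $|t-s|$ small, uniformly in $n$, by the hypothesis on $Y'$-pairings. This gives equicontinuity of $\langle\varphi,f_n(\cdot)\rangle$ for every $\varphi\in X'$.

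Armed with (a) and (b), I would run the standard diagonal extraction. Choose a countable dense set $\{t_k\}\subset[0,T]$, and use reflexivity plus separability (which makes the weak topology metrizable on bounded sets of $X$) to extract a subsequence $\{f_{n_j}\}$ with $f_{n_j}(t_k)\rightharpoonup g_k$ in $X$ for every $k$. Equicontinuity then implies that for each $t\in[0,T]$ the sequence $\{\langle\varphi,f_{n_j}(t)\rangle\}$ is Cauchy for every $\varphi\in X'$, so $f_{n_j}(t)\rightharpoonup f(t)$ in $X$ for some $f(t)$ with $\|f(t)\|_X\le M$. A finite-cover argument based on the same equicontinuity upgrades this pointwise convergence to uniform weak convergence in $t$, and the candidate limit $t\mapsto f(t)$ inherits weak continuity from (b).

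The subtlety I expect to dwell on is the compatibility between the $\Linf$-in-time $X$-bound and the pointwise $Y$-continuity: relative compactness in $C([0,T];X-w)$ requires the norm bound at every $t$, not merely a.e. This is where the $Y'$-dense hypothesis is crucial. The uniform continuity of $t\mapsto\langle\psi,f_n(t)\rangle$ for $\psi\in Y'$, combined with density in $X'$, lets me identify a unique weakly continuous $X$-valued representative of each $f_n$ and propagate the norm bound to every $t$. Once that bookkeeping is in place, the argument is a clean transcription of Arzelà--Ascoli into the weak-topology setting, and the sketched extraction delivers the claimed relative compactness.
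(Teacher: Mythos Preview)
The paper does not supply its own proof of this lemma: it is quoted verbatim from \cite{LionsI} in the appendix as a known tool, with no argument given. Your sketch is the standard Arzel\`a--Ascoli argument for the weak topology and is correct; in particular, the density step transferring equicontinuity from $Y'$ to $X'$, the metrizability of the weak topology on bounded sets (via separability of $X'$, which follows from $X$ being separable and reflexive), and the passage from the a.e.\ $L^\infty$-bound to a pointwise bound via the weakly continuous representative are all handled appropriately, so there is nothing to compare against and nothing to correct.
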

\begin{lem}[Young's measure]\label{App3.5}\cite[Proposition 3.1.1]{JMR}Given a bounded family $\{u^{\varepsilon}\}\subset L^p(\mathcal{O})$ of $\mathbb{R}^N$-valued functions, there exists a subsequence and a measurable family of probability measures on $\mathbb{R}^N$, $\{\mu(y,\cdot):y\in\mathcal{O}\}$, such that for all continuous functions $f(\lambda)$ on $\mathbb{R}^N$ which tend to 0 at infinity and for all $\varphi\in C_0(\mathcal{O})$,
\begin{equation}
    \int_{\mathcal{O}}\varphi(y)f(u^{\varepsilon}(y))dy\rightarrow\int_{\mathcal{O}}\int\varphi(y)f(\lambda)\mu(y,d\lambda)dy.
\end{equation}
\end{lem}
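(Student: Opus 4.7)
The plan is to produce $\mu(y,\cdot)$ as the disintegration of a weak-$*$ limit of graph measures associated with the sequence $\{u^{\varepsilon}\}$. For each $\varepsilon$ I define a positive Radon measure $\nu^{\varepsilon}$ on $\mathcal{O}\times\mathbb{R}^N$ by
\begin{equation*}
\int g\,d\nu^{\varepsilon} = \int_{\mathcal{O}} g(y,u^{\varepsilon}(y))\,dy, \qquad g\in C_0(\mathcal{O}\times\mathbb{R}^N),
\end{equation*}
(assuming $|\mathcal{O}|<\infty$; otherwise one localizes with a fixed countable exhaustion of $\mathcal{O}$ by sets of finite measure and diagonalizes at the end). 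Each $\nu^{\varepsilon}$ has total mass $|\mathcal{O}|$, so the family is uniformly bounded in the dual of $C_0(\mathcal{O}\times\mathbb{R}^N)$, and by separability together with Banach--Alaoglu I extract a subsequence $\nu^{\varepsilon_k}\rightharpoonup^{*}\nu$ for some positive Radon measure $\nu$.

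Next I identify the marginal of $\nu$. By construction, the push-forward of each $\nu^{\varepsilon}$ under the projection $(y,\lambda)\mapsto y$ is Lebesgue measure restricted to $\mathcal{O}$, and this marginal property is preserved in the weak-$*$ limit when tested against $g(y,\lambda)=\varphi(y)\chi_R(\lambda)$ with $\chi_R$ a suitable cutoff. The disintegration theorem for Radon measures then yields a measurable family $y\mapsto\mu(y,\cdot)$ of positive Radon measures on $\mathbb{R}^N$ satisfying $d\nu(y,\lambda) = d\mu(y,\lambda)\,dy$. Testing $\nu^{\varepsilon_k}\rightharpoonup^{*}\nu$ against the tensor product $g(y,\lambda)=\varphi(y)f(\lambda)\in C_0(\mathcal{O}\times\mathbb{R}^N)$ then delivers the advertised convergence directly.

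The main obstacle is to show that $\mu(y,\cdot)$ is a \emph{probability} measure and not merely a sub-probability, since weak-$*$ convergence in $(C_0(\mathbb{R}^N))^{*}$ can lose mass to infinity. To close this gap I would use the $L^p$-boundedness of $\{u^{\varepsilon}\}$: for any $\chi_R\in C_c(\mathbb{R}^N)$ with $0\le\chi_R\le 1$ and $\chi_R\equiv 1$ on $B_R$, Chebyshev yields
\begin{equation*}
\int_{\mathcal{O}}\bigl(1-\chi_R(u^{\varepsilon}(y))\bigr)\,dy \;\le\; R^{-p}\,\|u^{\varepsilon}\|_{L^p(\mathcal{O})}^{p},
\end{equation*}
uniformly in $\varepsilon$, tending to $0$ as $R\to\infty$. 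Passing to the limit first in $k$ against $\varphi\cdot(1-\chi_R)$ and then letting $R\to\infty$ via monotone/dominated convergence, one obtains $\mu(y,\mathbb{R}^N)=1$ for a.e.\ $y\in\mathcal{O}$. A standard density argument using separability of $C_0(\mathbb{R}^N)$ upgrades the convergence from a chosen countable dense subset to every admissible $f$, completing the proof.
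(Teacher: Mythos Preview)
The paper does not prove this lemma; it is quoted from an external reference and stated in the appendix without argument, so there is no in-paper proof to compare against. Your proposal is the standard construction of Young measures via weak-$*$ limits of graph measures followed by disintegration, and the overall strategy is correct.

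One small technical slip worth fixing: in the tightness step you propose to pass to the limit in $k$ against the test function $\varphi(y)(1-\chi_R(\lambda))$, but this function does not vanish as $|\lambda|\to\infty$ and therefore does not belong to $C_0(\mathcal{O}\times\mathbb{R}^N)$, so the weak-$*$ convergence of $\nu^{\varepsilon_k}$ cannot be applied to it directly. The repair is routine: test instead against $\varphi\,\chi_R\in C_0$, use your Chebyshev bound to compare $\int_{\mathcal{O}}\varphi(y)\chi_R(u^{\varepsilon_k}(y))\,dy$ with $\int_{\mathcal{O}}\varphi(y)\,dy$ uniformly in $k$, and then send $R\to\infty$ (monotone convergence on the $\nu$-side) to obtain $\int_{\mathcal{O}}\varphi(y)\,\mu(y,\mathbb{R}^N)\,dy\ge\int_{\mathcal{O}}\varphi(y)\,dy$ for all nonnegative $\varphi\in C_c(\mathcal{O})$. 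Combined with the opposite inequality coming from $\nu(\mathcal{O}\times\mathbb{R}^N)\le|\mathcal{O}|$, this yields $\mu(y,\mathbb{R}^N)=1$ a.e.\ as desired.
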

\begin{rmk}\cite[Remark 3.1.6]{JMR}
    Let $u^{\varepsilon}$ be a pure, real-valued and bounded family in $L^p(\mathcal{O})$, $p>1$. Then the mean value of $\mu$,
    \begin{equation}
        m(y):=\int\lambda\mu(y,d\lambda)\ a.e.,
    \end{equation}
    is the weak limit in $L^p(\mathcal{O})$ of $u^{\varepsilon}$. The variance
    \begin{equation}
        \sigma(y):=\int|\lambda-m(y)|^2\mu(y,d\lambda)=\int\lambda^2\mu(y,d\lambda)-(m(y))^2\ a.e.
    \end{equation}
    is defined and belongs to $L^{\frac{p}{2}}(\mathcal{O})$, provided that $p\ge2$. Then, saying that $\sigma\equiv0$ on an open set $\mathcal{O}_1\subset\mathcal{O}$ means that, for almost all $y\in\mathcal{O}_1$, $\mu(y,d\lambda)$ is the probability measure concentrated at $\lambda=m(y)$, i.e., that $\mu(y,d\lambda)$ is the Dirac mass at $\lambda=m(y)$.
\end{rmk}
\begin{lem}\label{App4}\cite[Proposition 3.1.7]{JMR}Suppose that $u^{\varepsilon}$ is a pure bounded, family in $L^p(\mathcal{O})$, $p\ge2$, with Young measure $\mu$. Then the variance $\sigma$ vanishes on the bounded subset $\mathcal{O}_1\subset\mathcal{O}$, if and only if $u^{\varepsilon}\rightarrow m$ strongly in $L^q(\mathcal{O}_1)$ for all $q<p$.
\end{lem}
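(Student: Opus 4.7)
\textbf{Proof plan for Lemma \ref{App4}.} The plan is to prove the two implications separately, leaning throughout on the defining property of the Young measure (Lemma \ref{App3.5}) and on the remark just before the statement, which already identifies $\sigma\equiv 0$ with $\mu(y,\cdot)=\delta_{m(y)}$ for a.e.\ $y$.

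For the easy direction ($\Leftarrow$), I would assume $u^{\varepsilon}\to m$ strongly in $L^q(\mathcal{O}_1)$ for some (hence every) $q<p$. Strong convergence yields convergence in measure, so $f(u^{\varepsilon})\to f(m)$ in measure for every continuous $f$. Taking $f\in C_0(\mathbb{R})$ so that $f$ is bounded, the dominated convergence theorem and Lemma \ref{App3.5} give, for any $\varphi\in C_0(\mathcal{O})$ supported in $\mathcal{O}_1$,
\begin{equation*}
\int_{\mathcal{O}_1}\varphi(y)f(m(y))\,dy=\lim_{\varepsilon}\int_{\mathcal{O}_1}\varphi(y)f(u^{\varepsilon}(y))\,dy=\int_{\mathcal{O}_1}\varphi(y)\!\int f(\lambda)\,\mu(y,d\lambda)\,dy.
\end{equation*}
Varying $\varphi$ and $f$ forces $\mu(y,\cdot)=\delta_{m(y)}$ a.e.\ on $\mathcal{O}_1$, hence $\sigma\equiv 0$.

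For the main direction ($\Rightarrow$), I would start from $\mu(y,d\lambda)=\delta_{m(y)}$ and first upgrade Lemma \ref{App3.5} to Carath\'eodory integrands $h(y,\lambda)$ (continuous in $\lambda$, measurable in $y$, uniformly bounded on $\mathcal{O}_1\times\mathbb{R}$). This is done by approximating the $y$-dependence via simple functions using Lusin's theorem and using the uniform bound to pass the Young-measure convergence through. Applied to the particular bounded Carath\'eodory integrand $h(y,\lambda)=\min(|\lambda-m(y)|^2,1)$, whose $\mu$-average vanishes identically on $\mathcal{O}_1$, this extension yields
\begin{equation*}
\int_{\mathcal{O}_1}\min\bigl(|u^{\varepsilon}(y)-m(y)|^2,\,1\bigr)\,dy\longrightarrow 0,
\end{equation*}
which is convergence in measure of $u^{\varepsilon}$ to $m$ on any bounded piece of $\mathcal{O}_1$. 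To promote this to strong $L^q$ convergence for any $q<p$, I note that $|u^{\varepsilon}-m|^q$ is bounded in $L^{p/q}(\mathcal{O}_1)$ with $p/q>1$, and is therefore equi-integrable by the de la Vall\'ee-Poussin criterion; Vitali's convergence theorem then upgrades convergence in measure to convergence in $L^1$, giving $\|u^{\varepsilon}-m\|_{L^q(\mathcal{O}_1)}\to 0$.

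\textbf{Main obstacle.} The delicate step is the extension of Lemma \ref{App3.5} from $y$-independent test functions $f(\lambda)\in C_0(\mathbb{R})$ to the Carath\'eodory integrand $h(y,\lambda)=\min(|\lambda-m(y)|^2,1)$, which carries the crucial $y$-dependence through $m(y)$. Without this step one cannot directly exploit $\mu=\delta_m$ to control the square deviation $|u^{\varepsilon}-m|^2$, since neither $\lambda^2$ nor $|\lambda-m(y)|^2$ lies in $C_0(\mathbb{R})$ uniformly in $y$. Once the Carath\'eodory extension is secured, the remainder is a standard Vitali/equi-integrability argument and the final passage to $L^q$ strong convergence is routine.
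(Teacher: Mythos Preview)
The paper does not prove this lemma; it is stated in the appendix as a citation of \cite[Proposition 3.1.7]{JMR} and used as a black box. There is therefore no in-paper proof to compare your proposal against. Your outline is a standard and correct route to the result: the $(\Leftarrow)$ direction via convergence in measure is clean, and for $(\Rightarrow)$ the extension of the Young-measure representation to bounded Carath\'eodory integrands followed by a Vitali/equi-integrability upgrade from convergence in measure to $L^q$ convergence is exactly the usual argument one finds in the Young-measure literature.
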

    \section{Derivation of (\ref{KLcondition})}\label{Derivation of (KLcondition)}
In this section, we briefly discuss the approach in \cite{KL} and derive (\ref{KLcondition}).

We apply H\"older inequality to the gravitational term:
  \begin{equation}
     \begin{aligned}
         \frac{1}{8\pi}\intr|\nabla\varPhi|^2\dbx&=\frac{1}{2}\intr\rho\varPhi\dbx\le\frac{1}{2}\|\rho\|_{L^p(B(0;a(t)))}\|\varPhi\|_{L^{p'}(\mathbb{R}^3)}\\
         &\le\frac{1}{2}\|\rho\|_{L^1(B(0;a(t)))}^{\theta}\|\rho\|^{1-\theta}_{L^{\gamma}(B(0;a(t)))}\|\varPhi\|_{L^{p'}(\mathbb{R}^3)},
     \end{aligned}
 \end{equation}
 where $\frac{1}{p}=\frac{\theta}{\gamma}+\frac{1-\theta}{1}$. By Hardy-Littlewood-Sobolev inequality,
 \begin{equation}
     \|\varPhi\|_{L^{p'}(\mathbb{R})^3}\le A_{\gamma}\|\rho\|_{L^{\gamma}(B(0;a(t)))},
 \end{equation}
 where $\frac{1}{p'}=\frac{1}{\gamma}-\frac{2}{3}$. So
 \begin{equation}
     \frac{1}{8\pi}\intr|\nabla\varPhi|^2\dbx\le\frac{1}{2}A_{\gamma}\|\rho\|_{L^1(B(0;a(t)))}^{1+\theta}\|\rho\|^{1-\theta}_{L^{\gamma}(B(0;a(t)))},
 \end{equation}
where $1+\theta=\frac{\gamma}{3(\gamma-1)}$, $1-\theta=\frac{5\gamma-6}{3(\gamma-1)}$. Since $\frac{1}{8\pi}\intr|\nabla\varPhi|^2\dbx=\frac{1}{2}\iint_{\mathbb{R}^3\times\mathbb{R}^3}\frac{\rho(\bx)\rho(\boldsymbol{y})}{|\bx-\boldsymbol{y}|}\dbx\dby$, we have
\begin{equation}
\begin{aligned}
    &\frac{1}{\gamma-1}\intb\rho^{\gamma}\dbx-\frac{1}{2}\iint_{\mathbb{R}^3\times\mathbb{R}^3}\frac{\rho(\bx)\rho(\boldsymbol{y})}{|\bx-\boldsymbol{y}|}\dbx\dby\\\ge&\frac{1}{\gamma-1}\intb\rho^{\gamma}\dbx-BM^{\frac{5\gamma-6}{3(\gamma-1)}}\left(\intb\rho^{\gamma}\dbx\right)^{\frac{1}{3(\gamma-1)}}
\end{aligned}
\end{equation}
for some positive constant $B$.

Now recall
\begin{equation*}
    \tilde{E_0}=4\pi\int_0^{a_0}\frac{1}{2}\rho_0u_0^2 r^2+\frac{1}{\gamma-1}\rho_0^{\gamma} r^2dr,
\end{equation*}
consider the function
\begin{equation}
    f(s)=\frac{1}{\gamma-1}s-BM^{\frac{5\gamma-6}{3(\gamma-1)}}s^{\frac{1}{3(\gamma-1)}}\ \ (s>0),
\end{equation}
we can easily find the maximum of the function is attained at $s^*=(\frac{B}{3})^{-\frac{3(\gamma-1)}{4-3\gamma}}M^{-\frac{5\gamma-6}{4-3\gamma}}$,
\begin{equation}
    f(s^*)=\frac{4-3\gamma}{\gamma-1}(\frac{B}{3})^{-\frac{3(\gamma-1)}{4-3\gamma}}M^{-\frac{5\gamma-6}{4-3\gamma}},
\end{equation}
and $f(s)$ is strictly increasing in $(0,s^*)$. Recall that in \cite{KL}, the critical mass is defined as follows
\begin{equation*}
    M<M_c=\left(\frac{4-3\gamma}{\gamma-1}(\frac{B}{3})^{-\frac{3(\gamma-1)}{4-3\gamma}}\right)^{\frac{4-3\gamma}{5\gamma-6}}(\frac{\tilde{E_0}}{4\pi})^{-\frac{4-3\gamma}{5\gamma-6}}.
\end{equation*}
By direct computation,
\begin{equation}
    s^*=(\frac{B}{3})^{-\frac{3(\gamma-1)}{4-3\gamma}}M^{-\frac{5\gamma-6}{4-3\gamma}}>(\frac{B}{3})^{-\frac{3(\gamma-1)}{4-3\gamma}}M^{-\frac{5\gamma-6}{4-3\gamma}}_c=\frac{\gamma-1}{4-3\gamma}\frac{\tilde{E_0}}{4\pi}>2(\gamma-1)\frac{\tilde{E_0}}{4\pi},
\end{equation}
and
\begin{equation}
    f(s^*)=\frac{4-3\gamma}{\gamma-1}(\frac{B}{3})^{-\frac{3(\gamma-1)}{4-3\gamma}}M^{-\frac{5\gamma-6}{4-3\gamma}}>\frac{1}{4\pi}\tilde{E_0}.
\end{equation}
On the other hand, it follows from the basic energy estimate that
\begin{equation}
    f\left(\int_0^{a(t)}\rho^{\gamma}r^2dr\right)\le\frac{1}{4\pi}\tilde{E_0}.
\end{equation}
Since $4\pi\int_0^{a_0}\rho_0^{\gamma}r^2dr\le(\gamma-1)\tilde{E_0}<4\pi s^*$, and $4\pi\int_0^{a(t)}\rho^{\gamma}r^2dr$ is changing continuously with respect to $t$, we conclude that $4\pi\int_0^{a(t)}\rho^{\gamma}r^2dr\le(\gamma-1)\tilde{E_0}<4\pi s^*$ for $t\ge0$. The following shows how the gravitational term is controlled
\begin{equation}\label{ZAMPcontrol}
    \begin{aligned}
        &3\int_0^{a(t)}\rho^{\gamma}r^2dr-4\pi\int_0^{a(t)}\rho r\int_0^r\rho s^2dsdr
        \\\ge&BM^{\frac{5\gamma-6}{3(\gamma-1)}}(s^*)^{\frac{4-3\gamma}{3(\gamma-1)}}\int_0^{a(t)}\rho^{\gamma}r^2dr-BM^{\frac{5\gamma-6}{3(\gamma-1)}}\left(\int_0^{a(t)}\rho^{\gamma}r^2dr\right)^{\frac{1}{3(\gamma-1)}}\\
        =&BM^{\frac{5\gamma-6}{3(\gamma-1)}}\left((s^*)^{\frac{4-3\gamma}{3(\gamma-1)}}-\left(\int_0^{a(t)}\rho^{\gamma}r^2dr\right)^{\frac{4-3\gamma}{3(\gamma-1)}}\right)\int_0^{a(t)}\rho^{\gamma}r^2dr,
    \end{aligned}
\end{equation}
in particular,
\begin{equation}
\begin{aligned}
    &3\int_0^{a_0}\rho^{\gamma}_0r^2dr-4\pi\int_0^{a_0}\rho_0 r\int_0^r\rho_0 s^2dsdr\\
    \ge&BM^{\frac{5\gamma-6}{3(\gamma-1)}}\left(\int_0^{a_0}\rho^{\gamma}_0r^2dr\right)\left((2(\gamma-1))^{\frac{4-3\gamma}{3(\gamma-1)}}-(\gamma-1)^{\frac{4-3\gamma}{3(\gamma-1)}}\right)(\frac{\tilde{E_0}}{4\pi})^{\frac{4-3\gamma}{3(\gamma-1)}}\\
        \ge& C_0M^{\frac{5\gamma-6}{3(\gamma-1)}}\left(\int_0^{a_0}\rho^{\gamma}_0r^2dr\right)^{\frac{1}{3(\gamma-1)}}       
\end{aligned}
\end{equation}
for some positive constant $C_0$.

    \bibliographystyle{abbrv}
    \bibliography{Lookup}

@article{ACDE,
title = {Function spaces, time derivatives and compactness for evolving families of Banach spaces with applications to PDEs},
journal = {J. Differential Equations},
volume = {353},
pages = {268-338},
year = {2023},
issn = {0022-0396},
doi = {https://doi.org/10.1016/j.jde.2022.12.032},
author = {Amal Alphonse and Diogo Caetano and Ana Djurdjevac and Charles M. Elliott}
}

@book{Chan,
    author ={Chandrasekhar, Subrahmanyan},
    title ={An Introduction to the Study of Stellar Structure} ,
    publisher ={University of Chicago Press},
    year = {1939}
}

@article{CHWY,
author = {Chen, Gui-Qiang and He, Lin and Wang, Yong and Yuan, Difan},
year = {2023},
month = {11},
pages = {2947-3025},
title = {Global solutions of the compressible Euler‐Poisson equations with large initial data of spherical symmetry},
volume = {77},
journal = {Commum. Pure Appl. Math.},
doi = {10.1002/cpa.22149}
}

@article{CHLWW,
author = {Chen, Gui-Qiang and Huang, Feimin and Li, Tianhong and Wang, Weiqiang and Wang, Yong},
year = {2024},
month = {03},
pages = {77},
title = {Global Finite-Energy Solutions of the Compressible Euler–Poisson Equations for General Pressure Laws with Large Initial Data of Spherical Symmetry},
volume = {405},
journal = {Commun. Math. Phys.},
doi = {10.1007/s00220-023-04916-1}
}

@article{CZZ2026,
  author= {Chen, Gui-Qiang G. and Zhang, Jiawen and Zhu, Shengguo},
  title= {Global Well-Posedness of the Vacuum Free Boundary Problem for the Degenerate Compressible Navier--Stokes Equations with Large Data of Spherical Symmetry},
  journal={arXiv},
  year={2026},
  doi={10.48550/arXiv.2601.06620},
}

@article{CK,
author = {Chen, Gui-Qiang and Kratka, Milan},
year = {2002},
month = {01},
pages = {907-943},
title = {Global solutions to the Navier-Stokes equations for compressible heat-conducting flow with symmetry and free boundary},
volume = {27},
journal = {Commun. Partial Differ. Equ.},
doi = {10.1081/PDE-120004889}
}

@article{CCL,
author = {Cheng, Ming and Cheng, Xing and Lin, Zhiwu},
year = {2025},
month = {06},
pages = {},
title = {Expanding Solutions Near Unstable Lane-Emden Stars},
volume = {406},
journal = {Commun. Math. Phys.},
doi = {10.1007/s00220-025-05308-3}
}

@article{CLW,
author = {Cheng, Ming and Lin, Zhiwu and Wang, Yucong},
year = {2025},
month = {10},
pages = {111239},
title = {Turning point principle for the stability of viscous gaseous stars},
volume = {290},
journal = {J. Funct. Anal.},
doi = {10.1016/j.jfa.2025.111239}
}

@article{DL,
title = {Global existence of weak solution for the compressible Navier–Stokes–Poisson system for gaseous stars},
author = {Qin Duan and Hai-Liang Li},
journal = {J. Differential Equations},
volume = {259},
number = {10},
pages = {5302-5330},
year = {2015},
issn = {0022-0396},
doi = {https://doi.org/10.1016/j.jde.2015.06.029},
}

@article{DLYY,
author = {Deng, Yinbin and Liu, Tai-Ping and Yang, Tong and Yao, Zheng-an},
year = {2002},
month = {09},
pages = {261-285},
title = {Solutions of Euler-Poisson Equations for Gaseous Stars},
volume = {164},
journal = {Arch. Ration. Mech.  Anal.},
doi = {10.1007/s00205-002-0209-6}
}

@article{FL,
author = {Fu, Chun-Chieh and Lin, Song-Sun},
year = {1998},
month = {10},
pages = {461-469},
title = {On the critical mass of the collapse of a gaseous star in spherically symmetric and isentropic motion},
volume = {15},
journal = {Jpn. J. Ind. and Appl. Math.},
doi = {10.1007/BF03167322}
}

@article{G,
author = {Gao, Shu},
year = {2010},
month = {01},
pages = {},
journal = {Thesis (Ph.D.), Northwestern University},
title = {Global solutions to the Navier-Stokes-Poisson equations for self-gravitating gaseous stars}
}

@article{GW,
author = {Goldreich, Peter and Weber, Stephen},
year = {1980},
month = {05},
pages = {991-997},
title = {Homologously collapsing stellar cores},
volume = {238},
journal = {Astrophys. J.},
doi = {10.1086/158065}
}

@article{GLX,
author = {Guo, Zhenhua and Li, Hai-Liang and Xin, Zhouping},
year = {2012},
month = {01},
pages = {371-412},
title = {Lagrange Structure and Dynamics for Solutions to the Spherically Symmetric Compressible Navier-Stokes Equations},
volume = {309},
journal = {Commun. Math. Phys.},
doi = {10.1007/s00220-011-1334-6}
}

@article{HJ,
author = {Had\v{z}i\'{c}, Mahir and Jang, Juhi},
year = {2016},
month = {05},
pages = {},
title = {Nonlinear Stability of Expanding Star Solutions of the Radially Symmetric Mass-Critical Euler-Poisson System},
volume = {71},
journal = {Commum. Pure Appl. Math.},
doi = {10.1002/cpa.21721}
}

@article{HJL,
author = {Hadzic, Mahir and Jang, Juhi and Lam, King Ming},
year = {2022},
month = {12},
pages = {},
title = {Nonradial stability of self-similarly expanding Goldreich-Weber stars},
journal = {arXiv},
doi = {10.48550/arXiv.2212.11420}
}

@article{J1,
author = {Jang, Juhi},
year = {2008},
month = {05},
pages = {265-307},
title = {Nonlinear Instability in Gravitational Euler–Poisson Systems for $\gamma=\frac{6}{5}$},
volume = {188},
journal = {Arch. Ration. Mech.  Anal.},
doi = {10.1007/s00205-007-0086-0}
}

@article{J2,
author = {Jang, Juhi},
year = {2007},
month = {07},
pages = {797–863},
title = {Local Well-Posedness of Dynamics of Viscous Gaseous Stars},
volume = {195},
journal = {Arch. Ration. Mech. Anal.},
doi = {10.1007/s00205-009-0253-6}
}

@article{J3,
author = {Jang, Juhi},
year = {2014},
month = {09},
pages = {1418-1465},
title = {Nonlinear Instability Theory of Lane-Emden Stars},
volume = {67},
journal = {Commum. Pure Appl. Math.},
doi = {10.1002/cpa.21499}
}

@article{JT,
author = {Jang, Juhi and Tice, Ian},
year = {2013},
month = {01},
pages = {1121-1181},
title = {Instability theory of the Navier-Stokes-Poisson equations},
volume = {6},
journal = {Anal. PDE},
doi = {10.2140/apde.2013.6.1121}
}

@article{JXZ,
author = {Jiang, Song and Xin, Zhouping and Zhang, Ping},
year = {2005},
month = {09},
pages = {239-252},
title = {Global weak solutions to 1D compressible isentropic Navier-Stokes with density-dependent viscosity},
volume = {12},
journal = {Methods Appl. Anal},
doi = {10.4310/MAA.2005.v12.n3.a2}
}

@article{JZ,
author = {Jiang, Song and Zhang, Ping},
year = {2001},
month = {01},
pages = {559-581},
title = {On Spherically Symmetric Solutions Of The Compressible Isentropic Navier-Stokes Equations},
volume = {215},
journal = {Commun. Math. Phys.},
doi = {10.1007/PL00005543}
}

@article{JMR,
author = {Joly, Jean-luc and Metivier, Guy and Rauch, Jeffrey},
year = {1995},
month = {10},
pages = {3921-3970},
title = {Focusing at a Point and Absorption of Nonlinear Oscillations},
volume = {347},
journal = {Trans. Am. Math. Soc.},
doi = {10.2307/2155210}
}

@article{KL,
author = {Kong, Huihui and Li, Hai-Liang},
year = {2017},
month = {01},
pages = {1-34},
title = {Free boundary value problem to 3D spherically symmetric compressible Navier–Stokes–Poisson equations},
volume = {68},
journal = {Z. Angew. Math. Phys.},
doi = {10.1007/s00033-016-0763-7}
}

@book{KWW,
    author = {Kippenhahn, Rudolf and Weigert, Alfred and Weiss, Achim},
    title = {Stellar Structure and Evolution},
    publisher = {Springer Berlin, Heidelberg},
    year = {2012}
}

@article{LSS,
author = {Lin, Song-Sun},
year = {1997},
month = {05},
pages = {539-569},
title = {Stability of Gaseous Stars in Spherically Symmetric Motions},
volume = {28},
journal = {SIAM J. Math. Anal.},
doi = {10.1137/S0036141095292883}
}

@article{LWZ,
author = {Lin, Zhiwu and Wang, Yucong and Zhu, Hao},
year = {2024},
month = {07},
pages = {843-880},
title = {Nonlinear stability of non-rotating gaseous stars},
volume = {391},
journal = {Math. Ann.},
doi = {10.1007/s00208-024-02940-7}
}

@article{LY,
author = {Liu, Tai-Ping and Yang, Tong},
year = {2000},
month = {01},
pages = {495-509},
title = {Compressible flow with vacuum and physical singularity},
volume = {7},
journal = {Methods Appl. Anal.},
doi = {10.4310/MAA.2000.v7.n3.a7}
}

@article{LZ,
author = {Lin, Zhiwu and Zeng, Chongchun},
year = {2022},
month = {11},
pages = {2511-2572},
title = {Separable Hamiltonian PDEs and Turning Point Principle for Stability of Gaseous Stars},
volume = {75},
journal = {Commum. Pure Appl. Math.},
doi = {10.1002/cpa.22027}
}

@book{LionsI,
    author = {Lions, Pierre-Louis},
    title = {Mathematical Topics in Fluid Mechanics, Vol. 1, Incompressible Models},
    publisher = {Oxford Science Publications, Clarendon Press},
    year = {1996}
}

@book{LionsII,
    author = {Lions, Pierre-Louis},
    title = {Mathematical Topics in Fluid Mechanics, Vol. 2, Compressible Models},
    publisher = {Oxford Science Publications, Clarendon Press},
    year = {1998}
}

@article{Liu2019Isentropic,
  title={On the expanding configurations of viscous radiation gaseous stars: the isentropic model},
  author={Liu, Xin},
  journal={Nonlinearity},
  volume={32},
  number={8},
  pages={2975-3011},
  year={2019},
  doi={10.1088/1361-6544/ab10d5}
}

@article{LXin,
title = {On the expanding configurations of viscous radiation gaseous stars: Thermodynamic model},
journal = {J. Differential Equations},
volume = {268},
number = {6},
pages = {2717-2751},
year = {2020},
issn = {0022-0396},
doi = {https://doi.org/10.1016/j.jde.2019.09.043},
author = {Xin Liu}
}

@article{LXin2,
author = {Liu, Xin},
year = {2018},
month = {12},
pages = {6100-6155},
title = {A Model of Radiational Gaseous Stars},
volume = {50},
journal = {SIAM J. Math. Anal.},
doi = {10.1137/17M1133476}
}

@article{LXZ1,
author = {Luo, Tao and Xin, Zhouping and Zeng, Huihui},
year = {2016},
month = {06},
pages = {90-182},
title = {On Nonlinear Asymptotic Stability of the Lane-Emden Solutions for the Viscous Gaseous Star Problem},
volume = {291},
journal = {Adv. Math.},
doi = {10.1016/j.aim.2015.12.022}
}

@article{LXZ2,
author = {Luo, Tao and Xin, Zhouping and Zeng, Huihui},
year = {2016},
month = {11},
pages = {657–702},
title = {Nonlinear Asymptotic Stability of the Lane-Emden Solutions for the Viscous Gaseous Star Problem with Degenerate Density Dependent Viscosities},
volume = {347},
journal = {Commun. Math. Phys.},
doi = {10.1007/s00220-016-2753-1}
}

@book{AI,
    author = {Novotný, Anton\'{i}n and Straškraba, Ivan},
    title = {Introduction to the Mathematical Theory of Compressible Flow},
    publisher = {Oxford University Press},
    year = {2004}
}

@article{Rein,
author = {Rein, Gerhard},
year = {2003},
month = {06},
pages = {115-130},
title = {Non-Linear Stability of Gaseous Stars},
volume = {168},
journal = {Arch. Ration. Mech.  Anal.},
doi = {10.1007/s00205-003-0260-y}
}

\end{document}